\tikzset{
  use path for main/.code={%
    \tikz@addmode{%
      \expandafter\pgfsyssoftpath@setcurrentpath\csname tikz@intersect@path@name@#1\endcsname
    }%
  },
  use path for actions/.code={%
    \expandafter\def\expandafter\tikz@preactions\expandafter{\tikz@preactions\expandafter\let\expandafter\tikz@actions@path\csname tikz@intersect@path@name@#1\endcsname}%
  },
  use path/.style={%
    use path for main=#1,
    use path for actions=#1,
  }
}
\newtheorem{thm}{Theorem}[section]
\newtheorem{coro}[thm]{Corollary}
\newtheorem{lemma}[thm]{Lemma}
\newtheorem{prop}[thm]{Proposition}
\theoremstyle{definition}
\newtheorem{defn}[thm]{Definition}
\newtheorem{remark}[thm]{Remark}
\newcommand{\R}{\mathbb R}
\newcommand{\B}{\mathbb B}
\newcommand{\eps}{\varepsilon}
\newcommand{\di}{\mathop{}\!\mathrm{d}}
\newcommand{\dE}{\partial E}
\let\epsilon\varepsilon
\DeclareMathOperator{\dist}{dist}
\mathchardef\ordinarycolon\mathcode`\:
\numberwithin{equation}{section}
\newcommand\link[1]{\url{#1}}
\begin{document}

\title[]{On critical points of the relative fractional perimeter}


\author{Andrea Malchiodi}
\author{Matteo Novaga}
\author{Dayana Pagliardini}

\address{Andrea Malchiodi \newline
Scuola Normale Superiore,
Piazza dei Cavalieri 7,
56126 Pisa, Italy}
\email{andrea.malchiodi@sns.it}

\address{Matteo  Novaga \newline
Dipartimento di Matematica, Università di Pisa, Largo B. Pontecorvo 5, 56217 Pisa, Italy}
\email{matteo.novaga@unipi.it}

\address{Dayana Pagliardini \newline
Scuola Normale Superiore,
Piazza dei Cavalieri 7,
56126 Pisa, Italy}
\email{dayana.pagliardini@sns.it}

\begin{abstract}
We study the localization of sets with constant nonlocal mean curvature and  prescribed small volume in a bounded open set with smooth boundary, proving that they are {\em sufficiently close} to critical points of a suitable non-local potential.  We then consider the fractional perimeter in  half-spaces. We prove the existence of a minimizer under fixed volume constraint, showing some of its properties such as smoothness and symmetry, being a graph in the $x_N$-direction, and characterizing its intersection with the hyperplane $\{x_N=0\}$.
\end{abstract}

\date{\today}

\subjclass[]{}
\keywords{}

\maketitle

\begin{center}
  Keywords: fractional mean curvature,  isoperimetric sets, perturbative variational theory. 
\end{center}

\tableofcontents

\section{Introduction}
Isoperimetric problems play a crucial role in several areas such as geometry, linear and nonlinear PDEs, probability, Banach space theory 
and others. 
Its classical version consists in studying least-area sets contained in a fixed region (the Euclidean space or any given domain). If 
the ambient space is an  $N$-dimensional manifold $M^N$ with or without boundary, the goal would be to find, among all the compact hypersurfaces $\Sigma \subset M$ which bound a region $\Omega$ of given volume $V(\Omega)=~m$ (for $0<m<V(M)$), those of minimal area $A(\Sigma)$. 
Such a  region $\Omega$ is called an {\em isoperimetric region} and its boundary $\Sigma$ is called an {\em isoperimetric hypersurface}.

A first general existence and regularity result can be obtained for example combining the results in \cite{A} with those in \cites{G,GMT}. In particular we have that if $N\le 7$, $\Sigma$ is smooth. 
We also refer the reader to the interesting survey \cite{R}.

Beyond the existence and the regularity problem, it is also interesting to  study the geometry and the topology of the solutions, and to give a qualitative description of the isoperimetric regions. Concerning these aspects,  we recall that in \cite{MJ} it was proved that a region of small prescribed volume in a smooth and compact Riemannian manifold has asymptotically (as the volume tends to zero) at least as much perimeter as a round ball.

Afterwards, regarding critical points of the perimeter relative to a given set, in \cite{Fall} the existence of surfaces with the 
shape of half spheres was shown, surrounding a small volume near nondegenerate critical points of the mean curvature of the  boundary of an open smooth set in $\mathbb{R}^3$. It was  proved that the boundary mean curvature determines the main terms, studying the problem via a Lyapunov-Schmidt reduction.
In  \cite{F}, the same author showed that isoperimetric regions with small volume in a bounded smooth domain $\Omega$ are near global maxima of the mean curvature of $\Omega$.

Results of this type were proven in \cite{D} and \cite{Y}. The authors considered closed manifolds and proved that isoperimetric regions with small volume locate near the maxima of the scalar curvature. In \cite{Y}  a viceversa was also shown: for every non-degenerate critical point $p$ of the scalar curvature there exists a neighborhood of $p$ foliated by constant mean curvature hypersurfaces. Moreover, in \cite{T}  the boundary regularity question for the capillarity problem was studied.

In  recent years fractional operators have received considerable attention for both in pure and applied motivations. 
In particular, regarding perimeter questions, in \cite{ADPM}  the link between the fractional perimeter and the classical De Giorgi's perimeter was analyzed, showing the equi-coercivity and the $\Gamma$-convergence of the fractional $s$-perimeter, up to a scaling factor depending on $s$, to the classical perimeter in the sense of De Giorgi and a local convergence result for minimizers 
was deduced.

Another relevant result about fractional perimeter was obtained in \cite{FFMMM}, generalizing a quantitative isoperimetric inequality to the fractional setting. Indeed, in the Euclidean space, it is known that among all sets of prescribed measure, balls have the least perimeter, i.e.\ for any Borel set $E \subset \mathbb{R}^N$  of finite Lebesgue measure, one has 
\begin{equation}\label{isopclass}
N|B_1|^{\frac{1}{N}}|E|^{\frac{N-1}{N}}\le P(E)
\end{equation}
with $B_1$ denoting the unit ball of $\mathbb{R}^N$ with center at the origin and $P(E)$ is the distributional perimeter of $E$. The equality in $\eqref{isopclass}$ holds if and only if $E$ is a ball. 

In \cite{FMM}   a similar result for the fractional perimeter $P_s$ (defined as in $\eqref{PersE}$) was obtained, improved then in \cite{FFMMM}  showing the following  
fact: 
for every $N\ge 2$ and any $s_0\in (0,1)$  there exists $C(N,s_0)>~0$ such that
\begin{equation}\label{fractisoperim}
P_s(E)\ge \frac{P_s(B_1)}{|B_1|^{\frac{N-s}{N}}}|E|^{\frac{N-s}{N}}\Biggl \{1+ \frac{A(E)^2}{C(N,s)} \Biggr \}
\end{equation}
whenever $s\in [s_0,1]$ and $0<|E|<\infty$.
Here 
\[
A(E):=\inf \Biggl \{ \frac{|E \triangle (B_{r_E}(x))|}{|E|}: x \in \mathbb{R}^N \Biggr \}
\]
stands for the \emph{Fraenkel asymmetry} of $E$, measuring the $L^1$-distance of $E$ from the set of balls of volume $|E|$ and $r_E=(|E|/|B_1|)^{1/N}$ so that $|E|=|B_{r_E}|$.

In the same spirit of extension of classical results to the fractional setting, we also mention \cite{MV}. Here the authors modify the classical Gauss free energy functional used in capillarity theory by considering surface tension energies of nonlocal type. They analyzed a family of problems including a nonlocal isoperimetric problem of geometric interest. In particular, given $N\ge 2$, $s\in (0,1)$, $\lambda \ge 1$ and $\eps \in [0,\infty]$ they considered the family of interaction kernels $\bold{K}(N,s,\lambda, \eps)$, i.e.\ even functions $K:\mathbb{R}^N \setminus \{0 \} \rightarrow [0,+\infty)$ such that
\[
\frac{\chi_{B_\eps}(z)}{\lambda |z|^{N+s}}\le K(z)\le \frac{\lambda}{|z|^{N+s}}\quad \forall \; z\in \mathbb{R}^N \setminus \{0 \}
\]
where $B_\eps(x)$ is the ball of center $x$ and radius $\eps$. Taking $\Omega \subset \mathbb{R}^N$ and $\sigma \in (-1,1)$ the authors studied the nonlocal capillarity energy of $E \subset \Omega$ defined as
\[
\mathcal{E}(E)=\int_E\int_{E^C \cap \Omega}K(x,y)\di x \di y+\sigma \int_E \int_{\Omega^C}K(x,y)\di x \di y
\]
with $K\in \bold{K}(N,s,\lambda, \eps)$, giving existence and regularity results, density estimates and new equilibrium conditions with respect to those of the classical Gauss free energy.

As it concerns constant nonlocal mean curvature, we mention the paper \cite{CFW16}, where it was proved the existence of Delaunay type surfaces, i.e.\ a smooth branch of periodic topological cylinders with the same constant nonlocal mean curvature. We also refer to \cite{Mi}, where the author constructs two families of hypersurfaces with constant nonlocal mean curvature.

Moreover we notice that recently, in \cite{M},  the axial symmetry of {\it smooth} critical points of the fractional perimeter in a half-space was shown, using a variant of the moving plane method.

Motivated by these results, in the first part of this paper our aim is to study the localization of sets with constant nonlocal mean curvature and small prescribed volume relative to an open bounded domain. The notions of relative fractional perimeter $P_S(E,\Omega)$ and of relative fractional mean curvature $H_s^\Omega$ we are going to use are given by formulas \eqref{PersE} and \eqref{NMC} in the next section.

\begin{thm}\label{mainth}
Let $\Omega \subseteq \mathbb{R}^N$ be a bounded open set with smooth boundary and $s\in (0,1/2)$.

For $x$ in a given compact set $\Theta$ of $\Omega$, set 
\[
V_\Omega(x):=\int_{\Omega^C}\dfrac{1}{|x-y|^{N+2s}}\di y. 
\]
Then for every strict local extremal or non-degenerate critical point $x_0$ of $V_\Omega$ in $\Omega$, there exists 
$\overline{\eps} > 0$ such that for every  $0 < \eps < \overline{\eps}$ there exist spherical-shaped surfaces 
with constant $H_s^\Omega$ curvature and enclosing volume identically equal to $\eps$,  approaching $x_0$ as $\eps \to 0$. 
\end{thm}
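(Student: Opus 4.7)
The plan is to perform a Lyapunov--Schmidt reduction in the spirit of \cite{Fall}, adapted to the nonlocal setting. The building blocks will be small near--spherical sets
\[
E_{x,r,\varphi}:=x+r\,B_1^{\varphi},
\]
where $B_1^\varphi$ is a normal graph over $S^{N-1}$ with profile $\varphi$, $r=r(\varepsilon,\varphi)$ is chosen so that $|E_{x,r,\varphi}|=\varepsilon$ (so to leading order $r\sim(\varepsilon/|B_1|)^{1/N}$), and $x$ varies in a neighbourhood of $\Theta$. Constant $H_s^\Omega$ surfaces of volume $\varepsilon$ are, by Lagrange multipliers, critical points of $P_s(\,\cdot\,,\Omega)$ among such sets.

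The core step is a two--scale asymptotic expansion of $P_s(E_{x,r,\varphi},\Omega)$. The intrinsic nonlocal perimeter of the rescaled set scales like $r^{N-2s}P_s(B_1^\varphi)$, while the $\Omega$--dependent correction, coming from the missing interaction with $\Omega^{C}$, can be Taylor--expanded because $\operatorname{dist}(E_{x,r,\varphi},\Omega^C)$ is bounded below uniformly. A direct computation should yield schematically
\[
P_s(E_{x,r,\varphi},\Omega)=r^{N-2s}\bigl(P_s(B_1)+\mathcal{Q}(\varphi)+O(\|\varphi\|^3)\bigr)-c_{N,s}\,r^{2N}V_\Omega(x)+o(r^{2N}),
\]
where $\mathcal{Q}$ is the quadratic form associated with the linearisation of the nonlocal mean curvature of $B_1$ at $\varphi=0$. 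Its kernel is the span of $\{x_i|_{S^{N-1}}\}$ corresponding to translations (constants being killed by the volume constraint); on the $L^2$--orthogonal complement $K^\perp$ the operator is uniformly invertible by known spectral results on the sphere.

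With this in hand, I would carry out the infinite--dimensional reduction. Writing $\varphi=\varphi^\perp+\sum c_i x_i$ and projecting the Euler--Lagrange equation onto $K^\perp$, the implicit function theorem (applied with $r$ as small parameter) produces a unique solution $\varphi^\perp=\varphi^\perp(x,\varepsilon)$ with $\|\varphi^\perp\|\to 0$ at a rate faster than $r^{2s}$, depending smoothly on $x$. Substituting back gives the reduced finite--dimensional functional
\[
\widetilde{P}_s(x,\varepsilon):=P_s\bigl(E_{x,r(\varepsilon,\varphi^\perp),\varphi^\perp(x,\varepsilon)},\Omega\bigr),
\]
whose critical points in $x$ correspond, through the reduction, precisely to constant $H_s^\Omega$ surfaces of volume $\varepsilon$. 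The leading $x$--dependence inherits the expansion above, so $\widetilde{P}_s(x,\varepsilon)=\alpha(\varepsilon)-c_{N,s}\,r(\varepsilon)^{2N}V_\Omega(x)+o(r(\varepsilon)^{2N})$ in $C^1_{\mathrm{loc}}$ (respectively $C^2_{\mathrm{loc}}$ in the non--degenerate case). Strict local extrema of $V_\Omega$ are stable under $C^0$--small perturbations, hence they generate extrema $x_\varepsilon$ of $\widetilde{P}_s(\,\cdot\,,\varepsilon)$ nearby; non--degenerate critical points persist via the implicit function theorem applied to $\nabla_x\widetilde{P}_s=0$. Either way $x_\varepsilon\to x_0$, yielding the desired family of constant nonlocal curvature spherical--shaped surfaces.

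The main obstacle I foresee is the precise derivation of the expansion in the second step: one has to show that $V_\Omega$ appears at the exact order $r^{2N}$ and, crucially, that the Lyapunov--Schmidt remainder from $\varphi^\perp$ is of strictly smaller order in a $C^1$--sense in $x$, so that it does not displace the critical point. The restriction $s\in(0,1/2)$ is likely to enter here in order to control the boundary layer and guarantee the gap between the orders $r^{N-2s}$ and $r^{2N}$; integrability of the relevant kernels near $\partial\Omega$ also requires this range. A secondary technical point is the explicit bound on the inverse of $\mathcal Q$ on $K^\perp$ uniformly in the parameters, together with the linearisation estimate for $H_s$ at $B_1$, which must be quoted from the fractional--curvature literature (e.g.\ \cite{CFW16}).
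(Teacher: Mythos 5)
Your overall strategy is the same as the paper's: a Lyapunov--Schmidt reduction on near-spherical sets with a volume constraint, using the invertibility of the linearized nonlocal mean curvature of the sphere on the orthogonal complement of the first spherical harmonics, followed by an asymptotic expansion of the reduced functional in which the relative correction to the free perimeter is governed by $V_\Omega$. (The paper carries this out at the rescaled scale, working with unit balls in $\Omega_\varepsilon=\frac1\varepsilon\Omega$, while you work at the original scale with balls of radius $r$; that choice is only cosmetic.)

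However, there is a genuine scaling error in your central expansion, and it is not innocuous. The $\Omega$-dependent correction is
\[
P_s(E,\Omega)-P_s(E)\;=\;-\int_E\int_{\Omega^C}\frac{\di x\,\di y}{|x-y|^{N+2s}}\;\approx\;-|E|\,V_\Omega(x_0)\;=\;-\omega_N\,r^{N}\,V_\Omega(x_0)+O(r^{N+1}),
\]
because $\Omega^C$ is fixed and only the set $E$ shrinks with $r$. So the correction is of order $r^{N}$, not $r^{2N}$ as you write. This matters precisely at the step you yourself flag as the main obstacle: the $V_\Omega$-term must dominate the Lyapunov--Schmidt remainder. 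Since the graph profile produced by the fixed point argument has size $O(r^{2s})$ (not ``faster than $r^{2s}$'' -- the relative curvature error forces exactly that rate), the quadratic remainder $r^{N-2s}\mathcal{Q}(\varphi^\perp)$ is of order $r^{N+2s}$. Against the true correction $r^{N}$ this is fine, since $r^{N+2s}=o(r^N)$. But against your claimed $r^{2N}$ it is not: for any $N\ge1$ and $s\in(0,1/2)$ one has $2N>N+2s$, so $r^{2N}=o(r^{N+2s})$ and the alleged leading term would be swamped by the remainder, making the location of critical points undetectable. With the correct power $r^N$, the paper's Lemmas \ref{sviluppoPer_s}, \ref{Aepscond} and Proposition \ref{persdevelopment} give exactly the needed separation ($\varepsilon^{2s}$ vs.\ $\varepsilon^{4s}$ in the rescaled picture).

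Two smaller points. First, the restriction $s\in(0,1/2)$ is not an integrability constraint near $\partial\Omega$ (the kernel is integrable there for any admissible $s$ since the ball stays at positive distance from $\Omega^C$); it is purely a normalization choice, as the paper emphasizes after Theorem \ref{mainth}. Second, for non-degenerate critical points you invoke the implicit function theorem on $\nabla_x\widetilde{P}_s$, which would require a $C^2$-expansion of the reduced functional; the paper instead establishes only a $C^1$-expansion (Lemma \ref{sviluppoPer_s} and Proposition \ref{persdevelopment} with their derivative estimates) and concludes via topological degree, which is the lighter and safer route.
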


Notice that in  \eqref{PersE} (as well as in the above formula) we are using the exponent $2s$ in the denominator, and hence 
in our notation the range $(0,1/2)$ for $s$ is  natural. 
One of the main tools for proving this result relies on  the non-degeneracy of spheres with respect to 
the linearized non-local mean curvature equation, which follows from a result in \cite{CFW}. 
 After non-degeneracy is established, we can use a 
 Lyapunov-Schmidt reduction to study a finite-dimensional problem, which is treated by 
carefully expanding  the relative  fractional perimeter of balls with small volume. Thanks to classical results 
in min-max theory, we obtain as a corollary a multiplicity result. Here and in the following, cat$(\Omega)$ denotes the Lusternik-Schnirelman category of the set $\Omega$ (see \cite{J} and Section \ref{sec2} below for more details).
\begin{coro}\label{Maincoro}
Let $\Omega \subseteq \mathbb{R}^N$ be a bounded open set with smooth boundary. 
Then there exists 
$\overline{\eps} > 0$ such that for every  $0 < \eps < \overline{\eps}$  there exist at least cat$(\Omega)$ spherical-shaped surfaces 
with constant $H_s^\Omega$ curvature and enclosing volume identically equal to $\eps$. 
\end{coro}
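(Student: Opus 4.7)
The plan is to apply classical Lusternik--Schnirelman category theory to the finite-dimensional reduced functional $\Phi_\varepsilon: \Omega \to \mathbb{R}$ produced by the Lyapunov--Schmidt reduction in the proof of Theorem \ref{mainth}. Recall that the critical points of $\Phi_\varepsilon$ are in bijection with the spherical-shaped surfaces of volume $\varepsilon$ and constant $H_s^\Omega$ curvature, and that $\Phi_\varepsilon$ admits an expansion of the form $c_0 + c_1\,\varepsilon^{\alpha}\,V_\Omega(x) + o(\varepsilon^\alpha)$, uniformly in $C^1$ on compact subsets of $\Omega$, for suitable constants $c_0,c_1$ and $\alpha>0$.

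Since $V_\Omega(x)\to+\infty$ as $x\to\partial\Omega$, the sublevel sets $\{V_\Omega\le c\}$ are compact in $\Omega$; for $c$ sufficiently large they deformation retract onto $\Omega$, so their Lusternik--Schnirelman category equals $\mathrm{cat}(\Omega)$. By the $C^1$-closeness above, for $\varepsilon$ small enough the perturbed sublevel sets $\{\Phi_\varepsilon\le c'\}$ inherit the same topology, are compactly contained in $\Omega$, and $\Phi_\varepsilon$ satisfies the Palais--Smale condition there (the domain is finite-dimensional and $\Phi_\varepsilon$ is coercive near $\partial\Omega$, inheriting the blow-up of $V_\Omega$).

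One then applies the standard min-max construction: set
\[
c_k(\varepsilon)=\inf_{A\in\mathcal{A}_k}\sup_{x\in A}\Phi_\varepsilon(x),
\]
for $k=1,\dots,\mathrm{cat}(\Omega)$, where $\mathcal{A}_k$ is the family of compact subsets of $\{\Phi_\varepsilon\le c'\}$ having category at least $k$ in $\Omega$. The classical Lusternik--Schnirelman theorem ensures that each $c_k(\varepsilon)$ is a critical value of $\Phi_\varepsilon$ and that in total we obtain $\mathrm{cat}(\Omega)$ geometrically distinct critical points. Each such critical point provides, through the reduction of Theorem \ref{mainth}, a spherical-shaped surface enclosing volume $\varepsilon$ with constant $H_s^\Omega$ curvature.

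The main obstacle is to guarantee that the min-max values $c_k(\varepsilon)$ remain strictly below the threshold at which $\Phi_\varepsilon$ blows up near $\partial\Omega$, so that the critical points are attained in the interior and correspond to genuine solutions rather than to boundary artifacts. This is achieved by combining the coercivity of $V_\Omega$ at $\partial\Omega$ with the uniform smallness of the perturbative remainder on compacta of $\Omega$, which enables us to pick a common sublevel $c'$ sitting inside a fixed compact subset of $\Omega$ for every sufficiently small $\varepsilon$.
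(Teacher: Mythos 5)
Your approach is essentially the same as the paper's: both run Lusternik--Schnirelman theory on the reduced finite-dimensional functional $\Phi_\xi = P_s^{\Omega_\eps}(\mathbb B(\xi,w_\eps))$ produced by the Lyapunov--Schmidt reduction, both invoke the blow-up of $V_\Omega$ at $\partial\Omega$ (Lemma \ref{comportamentoVomega}) to control the boundary behaviour, and both close via Theorem \ref{LSthm}. The small structural difference is that the paper fixes once and for all a compact $\Omega^\delta=\{x\in\Omega:d(x,\partial\Omega)>\delta\}$ and applies the version of LS theory for manifolds with boundary (Remark \ref{r:bdry}), using the sign of $(\nabla V_\Omega,\nu)$ on $\partial\Omega^\delta$; whereas you introduce (sub)level sets of $\Phi_\eps$ and argue that they inherit the category of $\Omega$. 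Both routes are standard and equivalent in spirit; the paper's choice of a fixed $\Omega^\delta$ avoids having to argue separately that the level sets have the right topology (which in your version is supplied by the monotonicity property cat$_M(A)\le$cat$_M(B)$ together with the nesting $\Omega^\delta\subset\{V_\Omega\le c\}\subset\Omega$).

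One genuine issue with the proposal as written is the sign of the leading term. The expansion \eqref{eq:exp-Phi-V} reads
\begin{equation*}
\Phi_{\bar x}=P_s^{\mathbb R^N}(B_1(\bar x))-\omega_N\eps^{2s}V_\Omega(\eps\bar x)+O(\eps^{1+2s}),
\end{equation*}
so $\Phi$ enters with a \emph{negative} multiple of $V_\Omega$: since $V_\Omega\to+\infty$ at $\partial\Omega$, the reduced perimeter $\Phi_\eps$ \emph{decreases} as the center approaches the boundary (this is geometrically clear, since the relative perimeter $P_s(\cdot,\Omega_\eps)$ loses interaction mass as the ball nears $\partial\Omega_\eps$). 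Therefore $\Phi_\eps$ is \emph{not} coercive near $\partial\Omega$, and the sets $\{\Phi_\eps\le c'\}$ are not the correct ones to compactify the problem. The fix is immediate and purely notational: apply LS theory to $-\Phi_\eps$, or equivalently work with superlevel sets $\{\Phi_\eps\ge c'\}$, or (as the paper does) check that the gradient of the functional one is minimizing points outward on $\partial(\frac1\eps\Omega^\delta)$ and invoke Remark \ref{r:bdry}. With this sign correction, the rest of your argument---preservation of category under $C^1$-small perturbations, the min-max construction of the levels $c_k(\eps)$, and the identification of critical points via Proposition \ref{criticoimplicaNMC}---is sound and matches the intended proof.
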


\

In the last part of this work we aim to study the existence and some properties of sets minimizing the fractional perimeter in a particular domain, namely a half-space:
\begin{thm}\label{Mainthm2}
There exists a minimizer $E$ for the problem 
\begin{equation}\label{Phalf}
\inf \Big \{ P_s(A, \mathbb{R}_+^N),\; |A|=m \Big \},\quad m\in (0,+\infty),
\end{equation}
where $\mathbb{R}_+^N:=\{ x \in \mathbb{R}^N:x_N>0\}$. Moreover $\partial E$ is a radially-decreasing symmetric graph of class $C^\infty$ in the interior, intersecting orthogonally the hyperplane $\{x_N=0\}$. 
\end{thm}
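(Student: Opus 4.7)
My strategy is to establish existence by the direct method applied to a symmetrized minimizing sequence, and then to derive the qualitative properties from the Euler--Lagrange equation, the regularity theory for nonlocal minimal surfaces, and a vertical moving-plane argument in the spirit of \cite{M}.

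\emph{Existence.} Take a minimizing sequence $E_n\subset\mathbb{R}_+^N$ with $|E_n|=m$. For each pair of heights $(t,\tau)$ the kernel $(x',y')\mapsto |(x'-y',\,t-\tau)|^{-(N+2s)}$ is a radial decreasing function of $x'-y'$ in $\mathbb{R}^{N-1}$, so the Riesz rearrangement inequality applied slice by slice shows that the horizontal Schwarz symmetrization (replacing each $E_n\cap\{x_N=t\}$ by the ball centered on the $x_N$-axis with the same $\mathcal{H}^{N-1}$-measure) does not increase $P_s(E_n,\mathbb{R}_+^N)$. After this step each $E_n$ is rotationally symmetric about the $x_N$-axis. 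To rule out vertical escape, I would compare with a half-ball of volume $m$ sitting on $\{x_N=0\}$: since interactions with $\mathbb{R}^N\setminus\mathbb{R}_+^N$ do not enter the energy, a scaling argument gives that the half-ball has energy strictly less than $P_s(B,\mathbb{R}^N)$, where $B$ is the full ball of volume $m$, which is the asymptotic energy along any sequence drifting to $x_N=+\infty$. This forces the minimizing sequence to remain supported in a bounded slab $\{0\leq x_N\leq M\}$. The bound on $P_s$ gives precompactness of $\chi_{E_n}$ in $L^1$ via fractional Sobolev embedding, and lower semicontinuity of the nonlocal perimeter identifies the limit $E$ as a minimizer with $|E|=m$.

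\emph{Properties.} The first variation of the volume-constrained functional yields $H_s^{\mathbb{R}_+^N}\equiv\lambda$ on $\partial E\cap\mathbb{R}_+^N$ for the Lagrange multiplier $\lambda$. Interior regularity theory for nonlocal minimal surfaces of constant fractional mean curvature gives $C^{1,\alpha}$, which bootstraps via the nonlocal Euler--Lagrange equation to $C^\infty$. Testing the first variation with deformations tangent to $\{x_N=0\}$ and supported near it produces a boundary contribution that vanishes precisely when $\partial E$ meets $\{x_N=0\}$ orthogonally: this is the nonlocal analogue of the zero contact angle condition in classical capillarity. Finally, the graph property in the $x_N$ direction follows from a moving-plane argument adapted from \cite{M}: sliding the plane $\{x_N=\lambda\}$ down from $+\infty$, the nonlocal maximum principle forces the reflection of $E\cap\{x_N>\lambda\}$ across the plane to remain inside $E$ for every $\lambda>0$, and the orthogonality condition at $\{x_N=0\}$ allows the procedure to be pushed to $\lambda\to 0^+$. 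Combined with the horizontal Schwarz symmetry already in place, this yields $\partial E=\{x_N=h(|x'|)\}$ with $h$ radially decreasing.

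\emph{Main obstacle.} The most delicate step is the joint justification of the orthogonality condition and of the moving-plane argument at the free boundary $\{x_N=0\}$, both of which demand control on $\partial E$ up to its contact set with $\{x_N=0\}$. Boundary regularity of nonlocal minimal surfaces in the presence of a free interface is notoriously subtle; a feasible workaround is to formulate the contact condition in the weak form provided by the first variation (bypassing pointwise regularity at the contact set) in the spirit of the nonlocal capillarity problem studied in \cite{MV}, and to run the moving-plane argument of \cite{M} only on the open half-space, recovering the graph property by a limiting argument as the plane approaches $\{x_N=0\}$.
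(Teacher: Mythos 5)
Your overall architecture (symmetrize a minimizing sequence horizontally, pass to the limit, derive qualitative properties) is in the spirit of the paper's proof, but the route diverges in two places that matter, and both divergences introduce gaps.

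\textbf{The graph property.} The paper obtains graphicality in the $x_N$-direction not by moving planes, but by a \emph{second} rearrangement: after the horizontal Schwarz symmetrization $E\mapsto E^*$, it applies the decreasing rearrangement in the $x_N$-direction, $E\mapsto\hat E$, and shows via the Caffarelli--Silvestre extension and Brock's Steiner-symmetrization inequality (\cite{B}) that $P_s(\hat E,\mathbb{R}_+^N)\le P_s(E,\mathbb{R}_+^N)$ with equality iff $E=\hat E$. This works at the level of measurable sets and requires no a priori regularity. Your replacement — a moving-plane argument à la \cite{M} — is problematic precisely because the moving-plane method needs $\partial E$ to be regular (at least $C^1$) and uses a strong maximum principle on the fractional mean curvature operator, yet regularity is one of the things you are still trying to prove. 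You acknowledge the difficulty and propose a ``limiting argument,'' but as stated the step is circular: you want to use regularity and the orthogonal contact condition to run moving planes up to $\{x_N=0\}$, yet these are among the conclusions. The paper sidesteps this entirely.

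\textbf{Existence and compactness.} Your comparison with a half-ball is correct as an estimate of the infimum (indeed $P_s(B^+_r,\mathbb{R}_+^N)$ scales as $r^{N-2s}$, and a short computation shows the half-ball of volume $m$ has strictly less relative $s$-perimeter than a full ball of volume $m$). But by itself this only bounds the infimum below the ``ball at infinity'' energy; it does not produce tightness of the minimizing sequence. You would still need to rule out splitting of mass and vanishing, which is the hard part of the direct method on an unbounded domain. The paper instead minimizes first on a bounded half-ball $B_R^+$, proves a uniform diameter bound on minimizers via the density estimates of \cite{MV}, and concludes the constraint $E\subseteq B_R^+$ is inactive for $R$ large, so the bounded minimizer is a free one. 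Some version of a uniform diameter or a concentration-compactness argument is needed in your approach and is currently missing.

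\textbf{Interior regularity.} Your claim that ``interior regularity theory for nonlocal minimal surfaces of constant fractional mean curvature gives $C^{1,\alpha}$'' glosses over the possible singular set: the general theory (\cite{CRS}, \cite{BFV}) gives smoothness only outside a closed singular set of Hausdorff dimension at most $N-8$. The paper rules out singularities by combining three things your proposal does not assemble: the set is a radially symmetric graph (from the two rearrangements), so a singular point could only be the unique top point; the blow-up there is a nonempty Lipschitz cone (by density estimates); and the flatness result of \cite{FV} then forces this cone to be a half-space, hence no singularity. You should incorporate an analogue of this dimension-reduction/flatness step rather than asserting $C^{1,\alpha}$ directly.

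In short: your symmetrization step and the heuristic for orthogonality are sound, but the paper's use of the decreasing rearrangement $\hat E$ (avoiding moving planes), the bounded-domain-plus-diameter-estimate scheme for existence, and the flatness argument for ruling out singular points are all essential and are not replaceable by the ingredients you propose without substantial additional work.
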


This result is proved by showing first the existence of a properly rearranged minimizing 
sequence which is axially symmetric and graphical over the boundary hyperplane. After 
this is done, we employ some results from \cite{BFV}, \cite{CRS}, \cite{MV} to 
prove a diameter bound and smoothness of the minimizing limit. 

\

The paper is organized as follows: In Section \ref{sec2} we introduce some notation on fractional perimeter and mean curvature, and we show some preliminary results, especially on the  linearized fractional mean curvature. 
We prove in 
particular the minimal degeneracy for spheres, also relative to suitably large domains. 
In Section \ref{sec3} we prove Theorem \ref{mainth}  via a Lyapunov-Schmidt reduction and Corollary \ref{Maincoro} through a well known result about the Lusternik-Schnirelman category. Finally, in Section~\ref{sec4} we prove Theorem \ref{Mainthm2} in two steps: the existence of minimizers in a bounded domain is a rather standard consequence of the direct method of Calculus of Variations.  We then show the symmetry of minimizers and, using the density estimates holding for the fractional perimeter, we prove also the connectedness and hence the free minimality.

\

\begin{center}
{\bf Acknowledgements} 
\end{center}

A.M. has been supported by the project {\em Geometric Variational Problems} from Scuola Normale Superiore, 
A.M. and D.P.  by MIUR Bando PRIN 2015 2015KB9WPT$_{001}$, M.N. by the University of Pisa via the grant PRA-2017-23. The authors are all members of GNAMPA as part of INdAM.

\section{Notation and preliminary results}\label{sec2}
In this section we introduce the notation that will be used throughout the paper.
We first define fractional perimeter spaces and fractional mean curvature, listing some of their properties.


For $0<s<1/2$ the {\em fractional perimeter} (or $s$-perimeter) of a measurable set $E\subset \mathbb{R}^N$ is defined as
\begin{equation}\label{Pers}
P_s(E):=\int_E\int_{E^C}\dfrac{\di x\di y}{|x-y|^{N+2s}},
\end{equation}
where $E^C$ is the complement of $E$.
It has also a simple representation in terms of the usual seminorm in the fractional Sobolev space $H^s(\mathbb{R}^N)$, that is
\[
P_s(E)=[ \chi_E]_{H^s(\mathbb{R}^N)}^2:=\int_{\mathbb{R}^N}\int_{\mathbb{R}^N}\dfrac{|\chi_E(x)-\chi_E(y)|^2}{|x-y|^{N+2s}}\di x\di y,
\]
where $\chi_E$ denotes the characteristic function of $E$.
We say that a set $E\subset \mathbb{R}^N$ has {\em finite $s$-perimeter} if $\eqref{Pers}$ is finite.
If $E$ is an open set and $\partial E$ is a smooth bounded surface, we have from \cite{ADPM}*{Theorem~$2$} that as $s\rightarrow 1/2$
\begin{equation}\label{GammaconvPer}
(1-2s)P_s(E)\rightarrow \omega_{N-1} P(E),
\end{equation}
where $\omega_{N-1}$ denote the volume of the unit ball in $\mathbb{R}^{N-1}$ for $N\ge 2$ and $P(E)$ is the perimeter in the sense of De Giorgi.

This nonlocal notion of perimeter can be considered also relative to a bounded open set $\Omega$ by 
the formula 
\begin{equation}\label{PersE}
P_s(E,\Omega):=\int_{E}\int_{\Omega \setminus E}\dfrac{\di x\di y}{|x-y|^{N+2s}}.
\end{equation}

\begin{defn}
We say that a set $E\subset \mathbb{R}^N$ is a {\em minimizer} for the fractional perimeter relative to $\Omega$ if 
\begin{equation}
P_s(E,\Omega)\le P_s(F,\Omega)
\end{equation}
for any measurable set F that coincides with $E$ outside $\Omega$, i.e.\ $F \setminus \Omega = E \setminus \Omega$.
\end{defn}
Let $s\in (0,1/2)$ and let $\Omega \subseteq \mathbb{R}^N$ be an open set. We recall that the fractional mean curvature of a set $E$ at a point $x\in \partial E$ is defined as follows
\begin{equation}\label{NMC}
H_s^\Omega(\partial E)(x) :=\int_{\Omega}\frac{\chi_{E^c \cap \Omega}(y)-\chi_E(y)}{|x-y|^{N+2s}}\di y,
\end{equation}
(see \cite{MV}*{Theorem $1.3$ and Proposition $3.2$ with $\sigma=0$ and $g=0$}) where $\chi_E$ denotes the characteristic function of $E$, $E^C$ is the complement of $E$, and the integral  has to be understood in the principal value sense.

If $E$ is smooth and compactly contained in $\Omega$, let $w$ be a 
 smooth function defined on on $\dE$, with small $L^\infty$ norm. 
We call $E_{w}$ the set whose boundary $\partial E_{w}$ is parametrized by
\begin{equation}\label{eq:Ew}
\partial E_{w}=\{ x+w(x)\nu_E(x)| x\in \dE \}
\end{equation}
where  $\nu_E$ is a normal vector field to $\dE$ exterior to $E$.

The first variation of the $s$-perimeter $\eqref{PersE}$ along these normal perturbations is given by 
\begin{equation}\label{firstvar}
\di_tP_s(E_{tw},\Omega)_{|t=0} =\dfrac{\di}{\di t}_{|t=0}P_s(E_{tw},\Omega)=\int_{\dE} H_s^\Omega(\dE)w,
\end{equation}
see \cite{DDPW}.

In the following, we take $B_1(\xi)$ a ball with center $\xi \in \mathbb{R}^N$ and unit radius, $w\in~C^1(\partial B_1(\xi))$, and we denote by 
$\mathbb{B}(\xi,w)$ the set such that 
\begin{equation}\label{Bdeformato}
\partial \mathbb{B}(\xi,w):=\{y\in \mathbb{R}^N: y=w(x)\nu(x), x\in \partial B_1(\xi)\},
\end{equation}
where $\nu$ is the outer unit normal to $\partial B_1(\xi)$.

Then we let
\begin{equation}\label{P_sxiomega}
S_\xi:=\partial B_1(\xi)\qquad \text{and}\qquad P_{s,\xi}^\Omega(w):=P_s^\Omega (\partial \mathbb{B}(\xi, w)).
\end{equation}
Moreover, for $\beta \in (2s,1)$ and $\varphi \in C^{1,\beta}(\partial \mathbb{B}(\xi,w))$, we  define 
\[
\Big(P_{s,\xi}^\Omega\Big)'(w)[\varphi]:=\int_{\partial \mathbb{B}(\xi,w)}H_s^\Omega (\partial \mathbb{B}(\xi,w))\varphi \di \sigma_w
\]
where  $d \sigma_{w}$ stands for the area element of $\partial \mathbb{B}(\xi, w)$.

Consider next the {\em spherical fractional Laplacian} 
$$
  L_s \varphi (\theta):= P.V.\int_S \dfrac{\varphi(\theta)-\varphi(\sigma)}{|\theta-\sigma|^{N+2s}}\di \sigma, 
$$
where $S=\partial B_1$ and the above integral is understood in the principal value sense.

It turns out that (see e.g. \cite{CFW})
\begin{equation}\label{linearization}
L_s:C^{1,\beta}(S) \rightarrow C^{\beta-2s}(S).\\
\end{equation}
The operator $L_s$ has an increasing sequence of eigenvalues $0=\lambda_0<\lambda_1<\lambda_2<\cdots$ whose explicit expression is given by
\begin{equation}\label{eigenvalues}
\lambda_k:=\dfrac{\pi^{(N-1)/2}\Gamma((1-2s)/2}{(1+2s)2^{2s}\Gamma((N+2s)/2)}\Biggl(\dfrac{\Gamma\Big(\dfrac{2k+N+2s}{2}\Big)}{\Gamma\Big(\dfrac{2k+N-2s-2}{2}\Big)}-\dfrac{\Gamma\Big(\dfrac{N+2s}{2}\Big)}{\Gamma\Big(\dfrac{N-2s-2}{2}\Big)} \Biggr),
\end{equation}
see \cite{S}*{Lemma 6.26}, where $\Gamma$  is the Euler Gamma function. The eigenfunctions are the usual spherical harmonics, i.e.\ one has\ 
\[
L_s \psi=\lambda_k \psi \qquad \text{for every}\; k\in \mathbb{N}\; \text{and}\; \psi \in \mathcal{E}_k,
\]
where $\mathcal{E}_k$ is the space of spherical harmonics of degree $k$ and dimension $n_k = N_k - N_{k-2}$, 
with
$$
  N_k = \frac{(n+k-1)!}{(n-1)! k!}, \quad k \geq 0, \qquad \qquad N_k = 0 \quad k < 0. 
$$
We recall that $n_0=1$ and that $\mathcal{E}_0$ consists of constant functions, whereas $n_1=N$ and $\mathcal{E}_1$ is spanned by the 
restrictions of the coordinate functions in $\R^N$ to the unit sphere $S$. 

\

For sets that are suitable graphs over the unit sphere $S$ of $\R^N$, we have the following 
result concerning fractional mean curvature relative to the whole space, see \cite{CFW}*{Theorem~2.1, Lemma 5.1 and Theorem 5.2 }(see also  formula (1.3) in the latter paper).

\begin{prop}\label{nucleoarmoniche}
Given $\beta \in (2s,1)$, consider the family of functions 
$$
  \Upsilon := \left\{ \varphi \in C^{1,\beta}(S) \; : \; \|\varphi\|_{L^{\infty}(S)} < \frac{1}{2}  \right\}. 
$$
Then the map $\varphi \mapsto H_s^{\R^N}(\partial \mathbb{B}(0,\varphi))$ is a $C^\infty$ function 
from $\Upsilon$ into $C^{\beta - 2 s}(S)$. Moreover, its linearization at $\varphi \equiv 0$ 
is given by 
\begin{equation}\label{lin}
\varphi\longmapsto 2d_{N,s} (L_s-\lambda_1 )\varphi,
\end{equation}
where $\lambda_1$ is defined  in $\eqref{eigenvalues}$ and $d_{N,s}:=\frac{1-2s}{(N-1)|B_1^{N-1}|}$ where $B_1^{N-1}$  is the unit ball in $\mathbb{R}^{N-1}$.
\end{prop}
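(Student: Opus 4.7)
The plan is to work in polar coordinates on $\mathbb{R}^N$, reduce the fractional mean curvature to a one-dimensional radial integral that can be evaluated in closed form, and then analyze the resulting operator on $S$. Parametrize the perturbed ball as
\[
\mathbb{B}(0,\varphi) = \{\rho\sigma : \sigma \in S,\ 0 < \rho < 1+\varphi(\sigma)\},
\]
so that its boundary is the radial graph $\rho = 1+\varphi(\sigma)$; for $\varphi \in \Upsilon$ this is a well-defined $C^{1,\beta}$ hypersurface. At a boundary point $x = (1+\varphi(\theta))\theta$, the defining integral of $H_s^{\mathbb{R}^N}$ takes the form
\[
H_s^{\mathbb{R}^N}(\partial\mathbb{B}(0,\varphi))(x) = \int_S \int_0^\infty \frac{\chi_{\{\rho > 1+\varphi(\sigma)\}} - \chi_{\{\rho < 1+\varphi(\sigma)\}}}{|x - \rho\sigma|^{N+2s}}\, \rho^{N-1}\, d\rho \, d\sigma.
\]
The inner radial integral is explicit once one splits it at $\rho = 1+\varphi(\sigma)$, producing a smooth function $\Phi(\varphi(\theta),\varphi(\sigma),\theta\cdot\sigma)$ of its three arguments.

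From this representation, the $C^\infty$ dependence of the mean curvature on $\varphi$ as a map $C^{1,\beta}(S) \to C^{\beta-2s}(S)$ follows by a now-standard argument: one separates a short-range contribution near $\sigma = \theta$, where the principal-value cancellation combined with a second-order Taylor expansion of $\varphi$ yields Hölder control with loss of $2s$ derivatives, from a long-range, smooth contribution. To identify the linearization at $\varphi \equiv 0$, one expands $\Phi$ to first order in $(\varphi(\theta),\varphi(\sigma))$. By the rotational invariance of both the sphere and of $H_s^{\mathbb{R}^N}$, the resulting linear operator commutes with rotations of $S$, and hence diagonalizes on the spherical harmonic subspaces $\mathcal{E}_k$, with a single multiplier $\mu_k$ per eigenspace.

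The key geometric input that pins down the shape of the spectrum is that a perturbation $\varphi = a\cdot\sigma \in \mathcal{E}_1$ corresponds, to first order in $a$, to translating $B_1$ by the vector $a$. Since the fractional mean curvature of any translate of $B_1$ equals the constant $H_s^{\mathbb{R}^N}(\partial B_1)$, the linearization must vanish on $\mathcal{E}_1$, so $\mu_1 = 0$. Combined with the fact that on each $\mathcal{E}_k$ the operator is (by its structure as a spherical convolution) a multiple of the shifted eigenvalue $\lambda_k - \lambda_1$ of $L_s$, this forces the linearization to be $c\,(L_s - \lambda_1)$ for a universal constant $c$. Testing on one nontrivial spherical harmonic (equivalently, matching the leading coefficient with the explicit computation of \cite{CFW}*{Theorem~2.1 and Theorem~5.2}) determines $c = 2 d_{N,s}$. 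The main technical obstacle is the careful treatment of the principal value near the diagonal $\sigma = \theta$ in the surface integral, and the bookkeeping of the constant $d_{N,s}$; the latter step is where we effectively appeal to the computations carried out in \cite{CFW}.
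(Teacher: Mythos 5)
The paper itself does not prove this proposition: it simply cites \cite{CFW} (Theorem~2.1, Lemma~5.1 and Theorem~5.2 therein). Your sketch is therefore a reconstruction of that argument, and most of the outline is sound: the radial parametrization, the short-range/long-range splitting for the $C^\infty$ dependence, the observation that the linearized operator is rotation-invariant and hence acts as a scalar $\mu_k$ on each spherical-harmonic space $\mathcal{E}_k$, and the use of the translation symmetry to conclude $\mu_1=0$ are all correct and are indeed the right ingredients.

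However, there is a genuine gap at the step where you deduce that the linearization equals $c\,(L_s-\lambda_1)$. You assert that on each $\mathcal{E}_k$ the operator is, ``by its structure as a spherical convolution,'' a multiple of $\lambda_k-\lambda_1$. Being a spherical convolution (equivalently, commuting with rotations) only tells you the operator is diagonal on the $\mathcal{E}_k$'s; it puts no constraint on the shape of the sequence $(\mu_k)$, and a generic zonal kernel gives eigenvalues that are not of the form $c(\lambda_k-\lambda_1)$ for any single constant $c$. What must actually be established, by computation, is that the linearized operator is an affine combination $c_1 L_s + c_2\,\mathrm{Id}$ --- concretely, that the zonal kernel in the nonlocal part of the Fr\'echet derivative is exactly $|\theta-\sigma|^{-(N+2s)}$. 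This does emerge from the expansion you set up: differentiating $\chi_{\{\rho<1+\varphi(\sigma)\}}$ in $\varphi(\sigma)$ produces a surface measure at $\rho=1$ whose radial integral collapses to precisely $|\theta-\sigma|^{-(N+2s)}$, while the $\varphi(\theta)$-derivative can be traded for the former via the scaling identity $F(r,r,t)=r^{-2s}F(1,1,t)$ for the inner radial integral, leaving only a local multiple of the identity. Only after this structural fact is in hand does $\mu_1=0$ pin down $c_2=-c_1\lambda_1$, with $c_1=2d_{N,s}$ read off from the explicit computation. So the appeal to \cite{CFW} is carrying more weight than ``bookkeeping of the constant'': it also underwrites the structural claim you treated as automatic.
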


\

As a consequence of the latter result we have than every function in the kernel of the above 
linearized nonlocal mean curvature is a linear combination of first-order spherical harmonics, 
i.e. if  $w\in \text{Ker}\; (L_s-\lambda_1)$, we have 
\begin{equation}\label{Ts1}
w=\sum_{i=1}^N{\lambda_i Y_i},
\end{equation}
where $\{Y_i\}_{i=1,\cdots,N} \in \mathcal{E}_1$ and $\lambda_i\in \mathbb{R}$.
Therefore, defining 
\begin{equation}\label{eq:space-W}
W:=\Big \{ w\in C^{1,\beta}(S): \int_{S} w \, Y_i = 0 \hbox{ for } i = 1, \dots, N \Big \},
\end{equation}
it follows by Fredholm's theory that $L_s-\lambda_1$ is invertible on $W$. 

\

As a consequence of the above proposition, using a perturbation argument (i.e. an approximate invariance by translation), we deduce also the 
following result, for which we need to introduce some notation. 
Let $\Omega$ be a bounded set in $\R^N$, for $\eps > 0$ let $\Omega_\eps := \frac{1}{\eps} \Omega$.
Fix a compact set $\Theta$ in $\Omega$, and let $\xi \in \frac{1}{\eps} \Theta$. Consider then 
the operator $L_{s, \xi}^{\Omega_\eps}$ corresponding to the linearization of the 
$s$-mean curvature at $B_1(\xi)$ relative to $\Omega_\eps$, namely the 
non-local operator such that 
$$
  \dfrac{\di}{\di t}_{|t=0}H_s^{\Omega_\eps}(\partial \mathbb{B}(\xi, t\varphi))(x)=  (L_{s,\xi}^{\Omega_\eps} \varphi)(x), 
$$
for any $\varphi$ of class $C^{1,\beta}$, $\beta > 2s$. We have then the following result. 

\

\begin{prop}\label{p:invert-orth}
Let $\Omega$, $\Theta$, $\xi$ and $L_{s, \xi}^{\Omega_\eps}$ be as above, and let 
$\beta \in (2s,1)$. Consider the family of functions 
$$
  \Upsilon := \left\{ \varphi \in C^{1,\beta}(S_\xi) \; : \; \|\varphi\|_{L^{\infty}(S_\xi)} < \frac{1}{2}  \right\}. 
$$
Then the map $\varphi \mapsto H_s^{\Omega_\eps}((\partial \mathbb{B}(\xi,\varphi))$ is a $C^\infty$ function 
from $\Upsilon$ into $C^{\beta - 2 s}(S_\xi)$. 
Moreover, if  $W = W_\xi$ is as in \eqref{eq:space-W},   $L_{s, \xi}^{\Omega_\eps}$ 
is invertible with uniformly bounded inverse on $W$.
\end{prop}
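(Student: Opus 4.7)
The plan is to exhibit $L_{s,\xi}^{\Omega_\eps}$ as a small perturbation of its full-space counterpart $2d_{N,s}(L_s-\lambda_1)$ and invoke Proposition \ref{nucleoarmoniche}. First I would observe that, since $\xi \in \tfrac{1}{\eps}\Theta$ and $d_0 := \dist(\Theta, \partial \Omega) > 0$, one has $\dist(\xi, \partial \Omega_\eps) \geq d_0/\eps$, so for $\eps$ small enough every $\mathbb{B}(\xi, \varphi)$ with $\varphi \in \Upsilon$ is compactly contained in $\Omega_\eps$. Using the definition \eqref{NMC} and splitting the integral over $\R^N$ into $\Omega_\eps$ and $\Omega_\eps^c$, one then obtains, for $x \in \partial\mathbb{B}(\xi, \varphi)$,
\[
H_s^{\Omega_\eps}(\partial \mathbb{B}(\xi,\varphi))(x) = H_s^{\R^N}(\partial \mathbb{B}(\xi,\varphi))(x) - V_{\Omega_\eps}(x),
\]
where $V_{\Omega_\eps}(x) := \int_{\Omega_\eps^c} |x-y|^{-N-2s} \di y$ is a $C^\infty$ function of $x$ in a neighbourhood of $S_\xi$. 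The $C^\infty$ regularity of $\varphi \mapsto H_s^{\Omega_\eps}(\partial \mathbb{B}(\xi,\varphi))$ from $\Upsilon$ to $C^{\beta-2s}(S_\xi)$ thus follows from Proposition \ref{nucleoarmoniche} (translated so that $\xi$ plays the role of the origin) together with the smooth dependence of $V_{\Omega_\eps}(x+\varphi(x)\nu(x))$ on $\varphi$.

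Next I would linearize at $\varphi\equiv 0$: since the point on $\partial\mathbb{B}(\xi,t\varphi)$ corresponding to $x \in S_\xi$ is $x + t\varphi(x)\nu(x)$, the chain rule combined with Proposition \ref{nucleoarmoniche} gives
\[
L_{s,\xi}^{\Omega_\eps}\varphi(x) = 2 d_{N,s}(L_s-\lambda_1)\varphi(x) - \bigl(\nabla V_{\Omega_\eps}(x)\cdot\nu(x)\bigr)\varphi(x).
\]
The key quantitative estimate is that for $x \in S_\xi$ and $y \in \Omega_\eps^c$ one has $|x-y| \geq d_0/\eps - 1$, whence a direct integration yields
\[
\|V_{\Omega_\eps}\|_{C^k(S_\xi)} \leq C_k\, \eps^{2s}
\]
for every $k$, with $C_k$ depending only on $N,s,\Omega,\Theta,k$ and in particular uniformly in $\xi \in \tfrac{1}{\eps}\Theta$. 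Consequently the multiplication operator $\varphi \mapsto (\nabla V_{\Omega_\eps}\cdot\nu)\varphi$ has operator norm $O(\eps^{2s})$ from $C^{1,\beta}(S_\xi)$ into $C^{\beta-2s}(S_\xi)$, again uniformly in $\xi$.

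Finally, by translation the operator $2d_{N,s}(L_s-\lambda_1)$ on $S_\xi$ is unitarily equivalent to the corresponding one on $S$. Proposition \ref{nucleoarmoniche} together with \eqref{Ts1}--\eqref{eq:space-W} and Fredholm's alternative imply that on $W_\xi$ (the translate of $W$) this operator has a bounded inverse whose norm depends only on $N$ and $s$. A standard Neumann series argument, exploiting the $O(\eps^{2s})$ bound on the perturbation, then yields that $L_{s,\xi}^{\Omega_\eps}$ is invertible on $W_\xi$ for $\eps$ sufficiently small, with a bound on the inverse independent of both $\eps$ and $\xi \in \tfrac{1}{\eps}\Theta$.

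The main obstacle I foresee is the uniform-in-$\xi$ control of the correction term $\nabla V_{\Omega_\eps}\cdot\nu$ in the appropriate Hölder norm on $S_\xi$: one needs to verify that derivatives of $V_{\Omega_\eps}$ of order up to $1+\beta-2s$ on $S_\xi$ are all $o(1)$ as $\eps \to 0$, uniformly as $\xi$ ranges over $\tfrac{1}{\eps}\Theta$. Once this is settled, the statement reduces to the invertibility of the full-space linearization of Proposition \ref{nucleoarmoniche} together with a standard perturbation argument.
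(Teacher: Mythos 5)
Your proof is correct, and it is exactly the ``perturbation argument (approximate invariance by translation)'' that the paper alludes to but does not write out: the paper states Proposition~\ref{p:invert-orth} without a written proof, deducing it from Proposition~\ref{nucleoarmoniche} by the same decomposition you use. Concretely, your identity $H_s^{\Omega_\eps}(\partial\mathbb{B}(\xi,\varphi)) = H_s^{\R^N}(\partial\mathbb{B}(\xi,\varphi)) - V_{\Omega_\eps}$ on $S_\xi$ is the same splitting the paper employs (compare \eqref{eq:hs-compl} in the proof of Lemma~\ref{Aepscond}), and your scaling estimate $\|V_{\Omega_\eps}\|_{C^k(S_\xi)}=O(\eps^{2s})$ (with derivatives gaining further powers of $\eps$, uniformly in $\xi\in\Theta_\eps$) is precisely what makes the Neumann series close: the correction term $-(\nabla V_{\Omega_\eps}\cdot\nu)\varphi$ to the linearization has operator norm $O(\eps^{2s+1})$ from $C^{1,\beta}$ to $C^{\beta-2s}$, so invertibility of $L_{s,\xi}^{\Omega_\eps}$ on $W$ (more precisely, after composing with the $L^2$-projection onto the orthogonal complement of $\mathcal{E}_1$ in the target, which is the implicit reading of ``invertible on $W$'') follows from the invertibility of $2d_{N,s}(L_s-\lambda_1)$ given by Fredholm and \eqref{Ts1}--\eqref{eq:space-W}. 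The one cosmetic caveat is to keep the parametrization of $S_\xi$ versus the unit sphere $S$ consistent when writing $\nu(x)$ and $\nabla V_{\Omega_\eps}(x)$, but this does not affect the substance of the argument.
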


\

Given a topological space $M$ and a subset $A \subseteq M$, we recall next the definition and some properties of the Lusternik-Schnirelman category.

\begin{defn}\label{category}\cite{AM}*{Definition $9.2$}
The category of $A$ with respect to $M$, denoted by cat$_M(A)$, is the least integer $k$ such that $A\subseteq A_1 \cup \cdots \cup A_k$ with $A_i$ closed and contractible in $M$ for every $i=1,\cdots, k$.
\end{defn}
We set cat$(\varnothing)=0$ and cat$_M(A)=+\infty$ if there are no integers with the above property. We will use the notation cat$(M)$ for cat$_M(M)$.
\begin{remark}\label{catinclusione}
From Definition \ref{category}, it is easy to see that cat$_M(A)=$ cat$_M (\bar{A})$. Moreover, if $A\subset B\subset M$, we have that cat$_M(A)\le$ cat$_M(B)$, see \cite{AM}*{Lemma $9.6$}.
\end{remark}
Then assuming that 
\begin{equation}\label{M}
M=F^{-1}(0),\; \text{where}\; F\in C^{1,1}(E\subset M,\mathbb{R})\; \text{and}\; F'(u)\neq 0\; \forall \; u\in M,
\end{equation}
we set
\[
\text{cat}_k (M)=\sup \{\text{cat}_M(A) :A \subset M\; \text{and}\; A\; \text{is compact}\}.
\]
Note that if $M$ is compact, cat$_k(M)=$cat$(M)$.
At this point we can state a useful result about the Lusternik-Schnirelman category (see e.g. \cite{AM} 
for the definition of Palais-Smale ((PS)-condition). 
\begin{thm}\label{LSthm}\cite{AM}*{Theorem $9.10$}
Let $M$ be a Hilbert space or a complete Banach manifolds. 
Let $\eqref{M}$ hold, let $J\in C^{1,1}(M,\mathbb{R})$ be bounded from below on $M$ and let $J$ satisfy $(PS)$-condition. Then $J$ has at least cat$_k (M)$ critical points.
\end{thm}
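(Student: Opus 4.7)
The plan is to prove Theorem \ref{LSthm} by the classical min-max argument for the Lusternik--Schnirelman category, adapted to functionals constrained to the Banach manifold $M = F^{-1}(0)$. For each integer $k$ with $1 \le k \le \text{cat}_k(M)$, I would define the min-max level
\[
c_k := \inf_{A \in \Sigma_k} \sup_{u \in A} J(u), \qquad \Sigma_k := \{ A \subset M : A \text{ compact and } \text{cat}_M(A) \ge k \}.
\]
Since $\text{cat}_k(M)$ is defined as the supremum of $\text{cat}_M(A)$ over compact sets, the family $\Sigma_k$ is nonempty precisely for $k \le \text{cat}_k(M)$. The assumption that $J$ is bounded below on $M$ ensures that each $c_k$ is a real number, and the monotonicity of category with respect to inclusion (Remark \ref{catinclusione}) gives $c_1 \le c_2 \le \cdots \le c_{\text{cat}_k(M)}$.

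The next step, and the main technical obstacle, is to prove a deformation lemma on the constrained manifold $M$: if $c \in \R$ is not a critical value of $J|_M$, then for some $\bar{\eps} > 0$ and every $\eps \in (0,\bar{\eps})$ there exists a continuous deformation $\eta: [0,1] \times M \to M$ with $\eta(0,\cdot) = \id_M$ and $\eta(1, \{J \le c+\eps\}) \subseteq \{J \le c-\eps\}$. Because $F \in C^{1,1}$ and $F'(u) \ne 0$ on $M$, one can construct the tangential gradient $\nabla_M J$ and integrate the pseudogradient flow tangent to $M$; the $(PS)$-condition guarantees that $\|\nabla_M J\|$ is bounded away from zero on $\{|J-c|\le \bar{\eps}\}$, so that the flow achieves a uniform decrease in $J$. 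The $C^{1,1}$ regularity of $F$ (hence of $M$) is exactly what is needed to make this flow well-defined and continuous.

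With the deformation lemma in hand, I would show that each $c_k$ is a critical value of $J|_M$ by the standard contradiction: if it were not, a minimizing $A \in \Sigma_k$ with $\sup_A J < c_k + \eps$ would deform through $\eta(1,\cdot)$ into $A' := \eta(1,A)$, which is still in $\Sigma_k$ because $\eta(1,\cdot)$ is homotopic to $\id_M$ (so that $\text{cat}_M(A') \ge \text{cat}_M(A) \ge k$), yet $\sup_{A'} J \le c_k - \eps$, contradicting the definition of $c_k$. This yields at least one critical point at each level $c_k$.

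Finally, to handle the possibility that several min-max levels coincide, I would invoke the multiplicity version: if $c_k = c_{k+1} = \cdots = c_{k+p} =: c$, then the set $K_c$ of critical points at level $c$ satisfies $\text{cat}_M(K_c) \ge p+1$, and in particular $K_c$ contains at least $p+1$ points (since a finite set has category equal to its cardinality). The proof is again by deformation: if $\text{cat}_M(K_c) \le p$, cover $K_c$ by an open set $U$ with $\text{cat}_M(\overline{U}) \le p$, use the deformation lemma in the complement of a slightly smaller neighborhood to push $\{J \le c+\eps\} \setminus U$ into $\{J \le c-\eps\}$, and exploit subadditivity of category to contradict $c_{k+p} \le c - \eps$. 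Summing these multiplicities across all distinct critical levels produces at least $\text{cat}_k(M)$ critical points of $J$ on $M$, completing the proof. The hardest ingredients are constructing the constrained pseudo-gradient flow in the $C^{1,1}$ category and invoking the subadditivity and continuity properties of $\text{cat}_M$ under homotopies, both of which are where the hypotheses on $M$ and $F$ are fully used.
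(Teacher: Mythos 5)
The paper does not prove Theorem \ref{LSthm}; it quotes it directly from \cite{AM}*{Theorem 9.10} and uses it as a black box in the proof of Corollary \ref{Maincoro}. Your sketch is the classical Lusternik--Schnirelman min-max scheme (min-max levels over compact sets of prescribed category, a constrained deformation lemma built from a pseudo-gradient flow tangent to $M=F^{-1}(0)$, and the subadditivity/multiplicity argument at coinciding levels), which is precisely the strategy of the cited reference, so the two approaches coincide. One small slip worth noting: the monotonicity $c_1\le c_2\le\cdots$ comes from the nesting $\Sigma_{k+1}\subseteq\Sigma_k$ of the admissible classes (any compact set of category at least $k+1$ also has category at least $k$), not from the inclusion monotonicity of category recorded in Remark \ref{catinclusione} as you suggest; the latter compares $\mathrm{cat}_M(A)$ with $\mathrm{cat}_M(B)$ for $A\subset B$, which is a different statement.
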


\begin{remark}\label{r:bdry}
If $M$ has boundary, under the same assumptions of Theorem \ref{LSthm} one can still 
find at least cat$_k (M)$ critical points for $J$ provided $\nabla J$ is non 
zero on $\partial M$ and points in the  outward direction.
\end{remark}

\section{Proof of Theorem \ref{mainth}}\label{sec3}

In this section we prove Theorem \ref{mainth} via a finite-dimensional 
reduction. This will determine the location of critical points of the relative $s$-perimeter 
depending on $s$ and the geometry of the domain. One of the main tools 
is the following asymptotic expansion of the relative $s$-perimeter. 
From now on, for every $\eps>0$, we set $\Omega_\eps:=\dfrac{1}{\eps}\Omega$, and we aim to prove that the nonlocal mean curvature $H_s^\Omega$ is {\em sufficiently close} to $H_s^{\mathbb{R}^N}$. Hereafter we will write simply $H_s$ to denote $H_s^{\mathbb{R}^N}$. 

\

\begin{lemma}\label{sviluppoPer_s}
Let $\Theta \subseteq \Omega$ be a fixed compact set. For all $\eps>0$ we consider $ B_1(\bar{x})$ a ball of center $\bar{x}\in \Theta_\eps := \frac{1}{\eps} \Theta$ and with unit radius. Then, for the fractional perimeter, the following expansion holds
\begin{equation}\label{developm}
P_s(B_1(\bar{x}),\Omega_\eps)=P_s(B_1(\bar{x}))-\omega_N \eps^{2s} V_\Omega(\bar{x})+O(\eps^{1+2s})\qquad \text{as}\; \eps \rightarrow 0,
\end{equation}
where $\omega_N$ is the volume of the $N$-dimensional unit ball and
\begin{equation}\label{Vomega}
V_\Omega(x):=\int_{\Omega^C}\dfrac{1}{|\eps x-y|^{N+2s}}\di y. 
\end{equation}
Moreover one has that 
\begin{equation}\label{developm-der}
\nabla_{\bar{x}} P_s(B_1(\bar{x}),\Omega_\eps)= - \omega_N \eps^{2s+1} \nabla_{\bar{x}} V_\Omega(\bar{x}) + O(\eps^{2+2s}).
\end{equation}
\end{lemma}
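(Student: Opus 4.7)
The plan is to isolate the error $P_s(B_1(\bar x)) - P_s(B_1(\bar x),\Omega_\eps)$, rescale it so that the $\eps$-dependence becomes explicit, and then Taylor-expand the resulting integrand about the center of the ball.

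First I would check that for $\eps$ small enough the ball $B_1(\bar x)$ lies strictly inside $\Omega_\eps$. Since $\Theta$ is compactly contained in $\Omega$, setting $d := \mathrm{dist}(\Theta,\partial\Omega) > 0$ yields $\mathrm{dist}(\Theta_\eps,\partial\Omega_\eps) = d/\eps \to \infty$, so for $\eps$ small one has $B_1(\bar x) \subset \Omega_\eps$ and $|\eps x - y| \ge d/2$ uniformly for $x \in B_1(\bar x)$ and $y \in \Omega^C$. Under this inclusion, $B_1(\bar x)^C$ splits as $(\Omega_\eps \setminus B_1(\bar x)) \sqcup \Omega_\eps^C$, whence
\[
P_s(B_1(\bar x)) - P_s(B_1(\bar x),\Omega_\eps) = \int_{B_1(\bar x)}\int_{\Omega_\eps^C} \frac{dx\,dy}{|x-y|^{N+2s}}.
\]
The change of variables $y \mapsto z/\eps$ in the inner integral, together with $\Omega_\eps^C = \frac{1}{\eps}\Omega^C$, turns it into $\eps^{2s}V_\Omega(x)$, with $V_\Omega$ as in \eqref{Vomega}, so \eqref{developm} reduces to showing $\int_{B_1(\bar x)} V_\Omega(x)\,dx = \omega_N V_\Omega(\bar x) + O(\eps)$.

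The key structural observation is $V_\Omega(x) = U_\Omega(\eps x)$, where $U_\Omega(z) := \int_{\Omega^C}|z-y|^{-(N+2s)}\,dy$ is smooth on $\Theta$; by the chain rule every spatial derivative of $V_\Omega$ therefore carries a factor $\eps$, so $\nabla V_\Omega = O(\eps)$ and $D^2 V_\Omega = O(\eps^2)$ uniformly on $B_1(\bar x)$. Taylor-expanding $V_\Omega$ about $\bar x$ and integrating, the linear term vanishes by the symmetry of the ball, giving $\int_{B_1(\bar x)} V_\Omega = \omega_N V_\Omega(\bar x) + O(\eps^2)$, and combined with the $\eps^{2s}$ prefactor this yields \eqref{developm} (in fact with the stronger remainder $O(\eps^{2s+2})$).

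For the gradient estimate \eqref{developm-der} only the correction contributes, because $P_s(B_1(\bar x))$ is translation-invariant in $\bar x$. After writing $\int_{B_1(\bar x)} V_\Omega(x)\,dx = \int_{B_1(0)} V_\Omega(\bar x + \xi)\,d\xi$ and differentiating under the integral,
\[
\nabla_{\bar x} P_s(B_1(\bar x),\Omega_\eps) = -\eps^{2s}\int_{B_1(0)} \nabla V_\Omega(\bar x + \xi)\,d\xi = -\eps^{2s}\,\omega_N\,\nabla V_\Omega(\bar x) + O(\eps^{2s+3}),
\]
again by a first-order Taylor expansion and the symmetry of $B_1(0)$. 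Unwinding the chain-rule factor via $\nabla V_\Omega(\bar x) = \eps\,(\nabla U_\Omega)(\eps \bar x)$ produces the $\eps^{2s+1}$ leading order announced in \eqref{developm-der}. The main obstacle is merely bookkeeping: tracking the chain-rule factors of $\eps$ and exploiting the symmetry of $B_1(\bar x)$ to eliminate the odd-order Taylor terms; no delicate analytic estimate is required, because the uniform distance bound $\mathrm{dist}(\eps\bar x,\partial\Omega) \ge d$ makes $U_\Omega$ and all its derivatives uniformly bounded on $\Theta$.
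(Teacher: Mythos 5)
Your proof is correct and follows essentially the same route as the paper: both isolate the correction $\int_{B_1(\bar x)}\int_{\Omega_\eps^C}|x-y|^{-(N+2s)}\,dx\,dy$, rescale the $y$-variable to pull out the factor $\eps^{2s}$, and Taylor-expand the remaining integrand in $x$ about the center $\bar x$. The only difference is that you additionally exploit the odd symmetry of $B_1(\bar x)$ to make the first-order Taylor term drop out, which yields the slightly sharper remainders $O(\eps^{2s+2})$ for \eqref{developm} and $O(\eps^{2s+3})$ for \eqref{developm-der}, whereas the paper estimates the first-order contribution crudely and stops at $O(\eps^{2s+1})$.
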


\begin{proof}
Taking $\eps$ small enough, we can assume $B_1(\bar{x}) \subset \Omega_\eps$. From \eqref{PersE} we have
\begin{equation}
P_s(B_1(\bar{x}),\Omega_\eps)-P_s(B_1(\bar{x}))=-\int_{B_1(\bar{x})}\int_{\mathbb{R}^N\setminus \Omega_\eps} \dfrac{1}{|x-y|^{N+2s}}\di x \di y. 
\end{equation}
If we replace $x$ with $\bar{x}$ in the last integrand, we obtain 
$$
  \dfrac{1}{|x-y|^{N+2s}} = \dfrac{1}{|\bar{x}-y|^{N+2s}} + O\left( \dfrac{1}{|\bar{x}-y|^{N+2s+1}} \right); 
  \qquad x \in B_1(\bar{x}), \quad y \in \mathbb{R}^N\setminus \Omega_\eps. 
$$
Therefore 
$$
  \int_{B_1(\bar{x})}\int_{\mathbb{R}^N\setminus \Omega_\eps} \dfrac{1}{|x-y|^{N+2s}}\di x \di y 
  = \omega_N \int_{\mathbb{R}^N\setminus \Omega_\eps} \dfrac{1}{|\bar{x}-y|^{N+2s}} \di y 
  + \int_{\mathbb{R}^N\setminus \Omega_\eps} \dfrac{O(1)}{|\bar{x}-y|^{N+2s+1}} \di y.
$$
From the latter formulas and a change of variables one then finds 
$$P_s(B_1(\bar{x}),\Omega_\eps)-P_s(B_1(\bar{x})) 
=- \eps^{2s} \omega_N  \int_{\Omega^C}\dfrac{1}{|\bar{x}-y|^{N+2s}}\di y+O(\eps^{1+2s}),
$$
which concludes the proof of \eqref{developm}. Formula \eqref{developm-der} follows in 
a similar manner.   
\end{proof}

\

We evaluate then the deviation of fractional $s$-mean curvature from a constant, 
when is it computed relatively to a large domain. 

\

\begin{lemma}\label{Aepscond}
Let $\beta \in (2s,1)$. For the fractional mean curvature defined in $\eqref{NMC}$, the following expansion holds:
\begin{equation}\label{eq:exp-Hs}
H_s^{\Omega_\eps}(S_\xi)= c_{N,s}+O(\eps^{2s}) 
\qquad \quad \hbox{ in } C^{\beta - 2s}(S_\xi),
\end{equation}
where $c_{N,s}:=H_s(S_\xi)$.
Moreover, one has that 
\begin{equation}\label{eq:exp-der-Hs}
\frac{\partial}{\partial_\xi }H_s^{\Omega_\eps}(S_\xi)= O(\eps^{2s+1})
\qquad \quad \hbox{ in } C^{\beta - 2s}(S_\xi).
\end{equation}
\end{lemma}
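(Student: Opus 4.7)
The plan is to compare $H_s^{\Omega_\eps}(S_\xi)$ with its full-space counterpart $H_s(S_\xi)$, which equals the constant $c_{N,s}$ by rotational invariance of the kernel. Since $\Theta \subset\subset \Omega$ and $\xi \in \Theta_\eps$, for $\eps$ small one has $B_1(\xi) \subset \Omega_\eps$, and by \eqref{NMC} the contributions coming from $y \in \Omega_\eps$ agree in the two definitions, leaving for every $x \in S_\xi$
\[
H_s^{\Omega_\eps}(S_\xi)(x) - c_{N,s} = -\int_{\mathbb{R}^N \setminus \Omega_\eps} \frac{\di y}{|x-y|^{N+2s}}.
\]

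Next, I would rescale via $z = \eps y$, with $\di y = \eps^{-N} \di z$ and $|x-y| = \eps^{-1}|\eps x - z|$, which transforms the integral into
\[
H_s^{\Omega_\eps}(S_\xi)(x) - c_{N,s} = -\eps^{2s} \int_{\Omega^C} \frac{\di z}{|\eps x - z|^{N+2s}}.
\]
Writing $x = \xi + \sigma$ with $\sigma \in S^{N-1}$, we have $\eps x = \eps\xi + \eps \sigma$ with $\eps\xi \in \Theta$ and $|\eps \sigma| = \eps$, so $\eps x$ remains at uniformly positive distance from $\Omega^C$ for $\eps$ small. Hence the integral is uniformly bounded in $\eps$ and $\xi \in \Theta_\eps$, yielding the $L^\infty$ bound of order $\eps^{2s}$. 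For the $C^{\beta - 2s}(S_\xi)$ norm, each $\sigma$-derivative of the smooth integrand $|\eps(\xi + \sigma) - z|^{-(N+2s)}$ picks up an extra factor $\eps$ from the chain rule, so the function $\sigma \mapsto \int_{\Omega^C} |\eps(\xi+\sigma)-z|^{-(N+2s)}\, \di z$ has $C^{\beta - 2s}$ norm that is in fact uniformly bounded (with seminorm of order $\eps$); combined with the prefactor $\eps^{2s}$, this gives \eqref{eq:exp-Hs}.

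For \eqref{eq:exp-der-Hs}, the same representation is differentiated in $\xi$: the chain rule produces an additional factor $\eps$ in front of a bounded smooth integrand, so
\[
\frac{\partial}{\partial \xi_j} H_s^{\Omega_\eps}(S_\xi)(\xi + \sigma) = -\eps^{2s+1} \int_{\Omega^C} \partial_{w_j}\bigl(|w-z|^{-(N+2s)}\bigr)\big|_{w=\eps(\xi+\sigma)} \di z,
\]
and the resulting quantity is uniformly controlled in $C^{\beta - 2s}(S_\xi)$, giving the claimed $O(\eps^{2s+1})$ bound. The only mildly delicate point is the uniformity of these estimates as $\xi$ ranges over the non-compact set $\Theta_\eps$; after rescaling, however, all quantities depend on $\xi$ only through $\eps \xi \in \Theta$, a compact set compactly contained in $\Omega$, and uniformity follows from the smoothness of $w \mapsto |w - z|^{-(N+2s)}$ on a positive-distance neighbourhood of $\Theta$ inside $\Omega$.
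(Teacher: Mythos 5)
Your proof is correct and follows essentially the same route as the paper's: both reduce to the observation that $H_s^{\Omega_\eps}(S_\xi) - c_{N,s} = -\int_{\mathbb{R}^N\setminus\Omega_\eps}|x-y|^{-(N+2s)}\di y$ and estimate this tail integral. Your explicit rescaling $z=\eps y$, which turns the tail into $\eps^{2s}\int_{\Omega^C}|\eps x - z|^{-(N+2s)}\di z$, makes the orders $\eps^{2s}$ and $\eps^{2s+1}$, the uniformity over $\xi\in\Theta_\eps$, and the passage from pointwise to $C^{\beta-2s}$ bounds somewhat more transparent than the paper's terser presentation, but the underlying argument is the same.
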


\begin{proof}
Using the definition of (relative) $s$-mean curvature we can write
\begin{equation}\label{eq:hs-compl}
H_s^{\Omega_\eps}(S_\xi)=H_s^{\Omega_\eps}(S_\xi)+H_s(S_\xi)-H_s(S_\xi)=c_{N,s}- H_s^{\mathbb{R}^N\setminus \Omega_\eps}(S_\xi),
\end{equation}
where we recall that $c_{N,s}:=H_s(S_\xi)$. Now simply observe that 
$$
(H_s^{\mathbb{R}^N\setminus \Omega_\eps}(S_\xi))(x) = \int_{\mathbb{R}^N\setminus \Omega_\eps} 
\frac{\di y}{|x-y|^{N+2s}} = O(\eps^{2s}). 
$$
Therefore we get
\begin{equation}
H_s^{\Omega_\eps}(S_\xi)=c_{N,s}+O(\eps^{2s}).
\end{equation}
Then, using \eqref{eq:hs-compl}, the 
formula after that, and differentiating with respect to $\xi$, we find 
$$
  \frac{\partial}{\partial_\xi }H_s^{\Omega_\eps}(S_\xi) = - 
  \frac{\partial}{\partial x} \int_{\mathbb{R}^N\setminus \Omega_\eps} 
  \frac{\di y}{|x-y|^{N+2s}} = \int_{\mathbb{R}^N\setminus \Omega_\eps} 
  O(1) \frac{\di y}{|x-y|^{N+2s+1}} = O(\eps^{2s+1}).
$$
We proved \eqref{eq:exp-Hs} and \eqref{eq:exp-der-Hs} in a pointwise sense. 
It is easy however to see that they also hold in the $C^1$ sense on the unit sphere $S_\xi$, 
and therefore also in $C^{\beta - 2s}(S_\xi)$. 
\end{proof}

\

We turn next to a finite-dimensional reduction of the problem, which is possible 
by the smallness of volume in the statement of Theorem \ref{mainth}. We refer 
to  \cite{AM2} for a general treatment of the subject.

\

\begin{prop}\label{LyapunovSchmidt}
Suppose $\Omega$ is a smooth bounded set of $\R^N$,   $\Theta$  a set compactly contained in 
$\Omega$, and let $\beta \in (2s,1)$. For  $\eps>0$ small, let $\xi\in  \Theta_\eps$. Then there exist $w_\eps:S_\xi\rightarrow\mathbb{R}$ in $W$ and $\lambda=~(\lambda_1, \cdots, \lambda_N)\in \mathbb{R}^N$ such that 
\[
Vol(\B(\xi,w_\eps)) = \omega_N; \qquad \quad\ \int_{S_\xi} w_\eps Y_i \di \sigma = 0; \qquad \quad   H^{\Omega_\eps}_s(\partial \B(\xi,w_\eps))=c+\sum_{i=1}^N{\lambda_i Y_i},
\]
where $c \in \R$ is close to $c_{N,s}$ and where $\{Y_i\}_{i=1,\cdots,N}\in \mathcal{E}_1$ (extended 
as zero-homogeneous function in a neighborhood of the unit sphere).
Moreover, there exists $C > 0$ (depending on $\Theta, \Omega, N$ and $s$) such that $\|w_\eps\|_{C^{1,\beta}(S_\xi)}\le C \eps^{2s}$ and such that 
$\|\partial_\xi w_\eps\|_{C^{1,\beta}(S_\xi)}\le C \eps^{2s+1}$.  
\end{prop}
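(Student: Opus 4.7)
The plan is a Lyapunov--Schmidt reduction carried out by a uniform implicit function theorem around the unperturbed sphere $S_\xi$. Let $V := \mathrm{span}\{1, Y_1, \dots, Y_N\}$ and write $\Pi_{V^\perp}$ for the $L^2(S_\xi)$-projection onto the closed complement of $V$ inside $C^{\beta-2s}(S_\xi)$ (functions whose spherical-harmonic expansion starts at degree $\ge 2$). I would introduce the nonlinear operator
\[
\Phi(w,\eps) := \bigl(\,\Pi_{V^\perp} H_s^{\Omega_\eps}(\partial \B(\xi,w)),\ \ \mathrm{Vol}(\B(\xi,w))-\omega_N\,\bigr),
\]
viewed as a smooth map $(W\cap\Upsilon)\times(0,\eps_0)\to\bigl(V^\perp\cap C^{\beta-2s}(S_\xi)\bigr)\times\R$. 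Smoothness is supplied by Proposition \ref{p:invert-orth} together with the elementary smoothness of the volume functional, and $\Phi(0,0)=0$ since $H_s(S_\xi)=c_{N,s}$ is constant and $B_1(\xi)$ has volume $\omega_N$.

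The key step is to show that $D_w\Phi(0,0)$ is an isomorphism whose inverse is bounded uniformly in $\xi\in\Theta_\eps$ and $\eps$. Decomposing any $w\in W$ as $w=a+\tilde w$ with $a\in\R$ and $\tilde w\in V^\perp$, Proposition \ref{nucleoarmoniche} combined with the standard first-variation formula for the volume under normal perturbations yields
\[
D_w\Phi(0,0)[a+\tilde w] = \bigl(\,2d_{N,s}(L_s-\lambda_1)\tilde w,\ \ |S_\xi|\,a\,\bigr),
\]
because $L_s$ preserves each spherical-harmonic eigenspace (so $(L_s-\lambda_1)$ sends $V^\perp$ into itself) and the constant contribution $-\lambda_1 a$ is annihilated by $\Pi_{V^\perp}$. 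On the degree-$k$ eigenspace, $L_s-\lambda_1$ is multiplication by $\lambda_k-\lambda_1\neq 0$ for $k\ne 1$, so it is boundedly invertible on $V^\perp$; the $a$-component is a nonzero scalar multiplication. The required $\xi,\eps$-uniformity of this inverse is exactly the content of Proposition \ref{p:invert-orth}.

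Once uniform invertibility is in hand, I would close the argument with a quantitative IFT (equivalently, a contraction on the fixed-point formulation $w=-[D_w\Phi(0,0)]^{-1}\bigl(\Phi(w,\eps)-D_w\Phi(0,0)\,w\bigr)$). Lemma \ref{Aepscond} controls the residual: $\|\Phi(0,\eps)\|\le C\eps^{2s}$ in $C^{\beta-2s}(S_\xi)\times\R$, since $c_{N,s}$ is killed by $\Pi_{V^\perp}$ and the volume component is identically zero at $w=0$. This produces a unique small $w_\eps\in W$ with $\Phi(w_\eps,\eps)=0$ and $\|w_\eps\|_{C^{1,\beta}(S_\xi)}\le C\eps^{2s}$. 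The scalars $c$ and $\lambda_i$ are then recovered as the projections of $H_s^{\Omega_\eps}(\partial\B(\xi,w_\eps))$ onto $1$ and $Y_i$, and smallness of $w_\eps$ together with $H_s^{\Omega_\eps}(S_\xi)\to c_{N,s}$ forces $c$ close to $c_{N,s}$. The $\xi$-derivative estimate follows by differentiating $\Phi(w_\eps,\eps)=0$ in $\xi$: the forcing $\partial_\xi\Phi(0,\eps)=\Pi_{V^\perp}\partial_\xi H_s^{\Omega_\eps}(S_\xi)=O(\eps^{2s+1})$ by \eqref{eq:exp-der-Hs}, and the same uniform inverse yields $\|\partial_\xi w_\eps\|_{C^{1,\beta}(S_\xi)}\le C\eps^{2s+1}$.

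The principal obstacle is $\xi$-uniformity. As $\eps\to 0$, the center $\xi$ ranges over the \emph{unbounded} rescaled set $\Theta_\eps=\eps^{-1}\Theta$, so every quantitative input to the IFT---the bounded inverse of $D_w\Phi(0,0)$, the higher smoothness norms of $\Phi$, and the residual asymptotics of Lemma \ref{Aepscond}---must be uniform in the translation parameter $\xi$. This translation-uniformity is precisely what Proposition \ref{p:invert-orth} secured, by perturbing the translation-invariant $\R^N$ model of Proposition \ref{nucleoarmoniche}; importing it is the only delicate ingredient, after which the reduction proceeds mechanically.
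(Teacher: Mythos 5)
Your proposal is correct and follows essentially the same route as the paper's proof. Your map $\Phi$ is exactly the paper's $F_{\overline{W}}$: the projection $\Pi_{V^\perp}$ onto harmonics of degree $\geq 2$ coincides with the paper's $P_{\overline{W}}$, and the domain $W=\R\oplus V^\perp$ is the same. The computation $D_w\Phi(0,0)[a+\tilde w]=\bigl(2d_{N,s}(L_s-\lambda_1)\tilde w,\,|S_\xi|\,a\bigr)$, the appeal to Proposition \ref{p:invert-orth} for a $\xi$-uniform inverse, and the contraction bootstrapped from the residual estimate of Lemma \ref{Aepscond} are all the same ingredients used in the paper. The one place your write-up is terser than the paper's is the $\partial_\xi w_\eps$ bound: the paper differentiates the two scalar constraints separately, obtains the $\overline{W}$-component $v_{i,\eps}$ from the $H_s$-equation via Proposition \ref{p:invert-orth} and formula \eqref{eq:exp-der-Hs}, and then controls the remaining constant mode $c_{i,\eps}$ via the volume constraint $\partial_{\xi_i}\mathrm{Vol}(\B(\xi,w_\eps))=0$; your phrasing "the same uniform inverse yields the estimate" compresses this into one invocation of the inverse of $D_w\Phi$ acting jointly on $\overline{W}\times\R$. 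That compression is legitimate -- the block structure of $D_w\Phi(0,0)$ makes the two arguments equivalent -- but it is worth being aware that this is the step where the volume component of the map earns its keep, which the paper spells out.
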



\

\noindent To make the above formula for $H_s^{\Omega_\eps}$ more precise, we mean  that 
$$
  H^{\Omega_\eps}_s(\partial \B(\xi,w_\eps))(\xi + x(1+w_\eps(x))) =c+\sum_{i=1}^N{\lambda_i Y_i}(x)  
  \qquad \quad \hbox{ for every } x \in S_\xi. 
$$

\

\begin{proof}
Let us denote by $\overline{W}$ the family of functions in $C^{\beta-2s}(S_\xi)$ that are $L^2$-orthogonal, with respect to the \underline{standard} volume element of $S_\xi$, to constants and to the first-order spherical harmonics. Notice that $\overline{W} \subseteq W$, see  \eqref{eq:space-W}. 
Let us consider the two-component function $F_{\overline{W}}  : \Theta_\eps \times C^{1,\beta}(S_\xi) \to C^{\beta-2s}(S_\xi) \times \R$
defined by 
\[
F_{\overline{W}}(\xi,w):= \left( P_{\overline{W}}(H^{\Omega_\eps}_s(\partial \B(\xi,w))), Vol(\B(\xi,w)) - \omega_N \right); 
\qquad \quad w\in W, 
\]
where $\omega_N:=Vol(B_1(\xi))$ and 
$P_{\overline{W}}: C^{\beta-2s}(S_\xi) \mapsto \overline{W}$ the orthogonal $L^2$-projection onto the space $\overline{W}$, 
with respect to the \underline{standard} volume element of $S_\xi$.  
With this notation, we want to find $w \in W$ such that  $F_{\overline{W}}(\xi,w) = (0,0)$.

By Lemma \ref{Aepscond} we have that 
\begin{equation}\label{eq:normFW0}
  F_{\overline{W}}(\xi,0) = (O(\eps^{2s}),0), 
\end{equation}
where the latter quantity is intended to be bounded by $C \epsilon^{2s}$ in the $C^{\beta-2s}(S_\xi)$ sense. 
Here and below, the constant $C$ is allowed to vary from one formula to the other. 

By Proposition \ref{p:invert-orth} and by the fact that 
$$
  d_w Vol(\B(\xi,w))_{|_{w=0}} [\varphi] = \int_{S_\xi} \varphi \, d \sigma, 
$$
we have that $L_\xi :=\nabla_w F_{\overline{W}}(\xi,0)\in Inv(W,\overline{W} \times \R)$ with $\|L_\xi^{-1}\|_{L(\overline{W} \times \R,W)} \le C$.
Hence $F_{\overline{W}}(\xi,w)=(0,0)$ if and only if $F_{\overline{W}}(\xi,0)+L_\xi [w]-L_\xi [w]+F_{\overline{W}}(\xi,w)-F_{\overline{W}}(\xi,0)=0$, 
which can be written as 
\[
w=T_\xi(w):=-L_\xi^{-1}[F_{\overline{W}}(\xi,0)-L_\xi [w]+F_{\overline{W}}(\xi,w)-F_{\overline{W}}(\xi,0)].
\]
Therefore $F_{\overline{W}}(\xi,w)=(0,0)$ if and only if $w$ is a fixed point for $T_\xi$.

Let us show that $T_\xi$ is a contraction in $B_{\overline{C}\eps^{2s}}(\xi)$ for $\overline{C}$ sufficiently large. From the definition of $T_\xi$, 
the above estimate \eqref{eq:normFW0}  and the fact that $$\|L_\xi^{-1}\|_{L(\overline{W} \times \R,W)} \le C,$$ we have 
\begin{equation}
\|T_\xi(0)\|_{C^{1,\beta}(S_\xi)}=\|L_\xi^{-1}[F_{\overline{W}}(\xi,0)]\|_{C^{1,\beta}(S_\xi)}\le C^2\eps^{2s}.
\end{equation}
Then, taking $w_1$ and $w_2\in B_{\bar{C}\eps^{2s}}(\xi)\subseteq W$  it follows that
\begin{equation}
\|T_\xi(w_1)-T_\xi(w_2)\|_{C^{1,\beta}(S_\xi)}\le C\|F_{\overline{W}}(\xi,w_1)-F_{\overline{W}}(\xi,w_2)-L_\xi[w_1-w_2]\|_{C^{1,\beta}(S_\xi)}.
\end{equation}
We notice that  $w \mapsto Vol(\B(\xi,w))$ is a smooth function from the 
metric ball of radius $\frac{1}{2}$ in $C^{1,\beta}(S_\xi)$ into $\R$. 
Thanks also to the smoothness statement in Proposition~\ref{p:invert-orth}, the right hand side in the latter 
formula can be bounded by 
\begin{equation}
\begin{aligned}
F_{\overline{W}}(\xi,w_1)&-F_{\overline{W}}(\xi,w_2)-L_\xi[w_1-w_2]=\int_0^1\Big(\nabla_w F_{\overline{W}}(\xi,w_2+s(w_1-w_2))\\
&-\nabla_w F_{\overline{W}}(\xi,0)\Big) [w_1-w_2] \di s \le C\|w_1-w_2\|_{C^{1,\beta}(S_\xi)}^{2}.
\end{aligned}
\end{equation}
Hence, in $B_{\bar{C}\eps^{2s}}(\xi)\subseteq W$ the Lipschitz constant of $T_\xi$ is $C \bar{C}\eps^{2s}$. So choosing first any $\bar{C}\ge 2C$, and then  $\eps>0$ small enough, we find therefore that $T_\xi$ is a contraction in $B_{\bar{C}\eps^{2s}}(\xi)$. As a consequence, there exists $w_\eps:S_\xi \rightarrow \R$ in $W$ such that $\|w_\eps\|_{C^{1,\beta}(S_\xi)}\le \bar{C} \eps^{2s}$ 
and such that $F_{\overline{W}}(\xi,w_\eps) = (0,0)$.

\

We also recall that the fixed point $w$ can be proved to 
be continuous and differentiable with respect to the parameter $\xi$, (see e.g. \cite{Bressan}, Section 2.6). 
Recall that $w_\eps = w_\eps(\xi)$ solves 
\[
Vol(\B(\xi,w_\eps)) = \omega_N\quad \text{and} \quad P_{\overline{W}}(H_s^{\Omega_\eps}(\partial \B(\xi,w_\eps))=0 \quad \qquad \text{for all}\; \xi \in \mathbb{R}^N.
\]
We want next to differentiate the above relations with respect to $\xi$. For this purpose, it is convenient 
to fix an index $i$, and to consider the one-parameter family of centers 
\begin{equation}\label{eq:xi-t}
  \xi(t) = \left(  \xi_1, \dots, \xi_i + t, \dots, \xi_N \right). 
\end{equation}
Our aim is to understand the variation of $\partial \mathbb{B}(\xi_t, w_\eps(\xi_t))$ normal 
to $\partial \mathbb{B}(\xi, w_\eps(\xi))$. The above variation is characterized by a 
translation in the $i$-th component and by a variation of $w_\eps$, which is in the 
radial direction with respect to the center $\xi$. Therefore, letting $\nu_{w_\eps}$ 
denote the unit outer normal vector to  $\partial \mathbb{B}(\xi, w_\eps(\xi))$, the normal variation in $t$ 
(computed at $t = 0$) is given by 
\begin{equation}\label{eq:normal-i-w}
   \nu_{w_\eps} \cdot e_i + \frac{\partial w_\eps(\xi)}{\partial {\xi_i}} (x-\xi) \cdot \nu_{w_\eps}. 
\end{equation}
%
%
%
%
Hence we have that 
%
%
\begin{equation*}
\frac{\partial}{\partial \xi_i} Vol(\B(\xi,w_\eps)) = 0\quad \text{and} \quad P_{\overline{W}} (H_s^{\Omega_\eps})'(\partial \B(\xi,w_\eps(\xi_i)))\Big[\nu_{w_\eps} \cdot e_i + \frac{\partial w_\eps(\xi)}{\partial {\xi_i}} (x-\xi) \cdot \nu_{w_\eps}\Big]=0.
\end{equation*}
Using \eqref{eq:exp-der-Hs} and Proposition \ref{p:invert-orth}  one 
finds from the second equation in the latter formula that $\| v_{i,\eps} \|_{C^{1,\beta}(S_\xi )} 
\leq C \eps^{2s+1}$, where $v_{i,\eps} = P_{\overline{W}}  \partial_{\xi_i} w_\eps$. Since $\dfrac{\partial w_\eps}{\partial \xi_i} \in W$, 
it remains to control then the component of $\partial_{\xi_i} w_\eps$ in the orthogonal complement of 
$\bar{W}$, namely its average. 

Let us write 
$$
  \partial_{\xi_i} w_\eps = v_{i,\eps} + c_{i,\eps} \qquad \text{with}\;   c_{i,\eps} \in \R. 
$$
From a direct computation we have that 
$$
 0 = \frac{\partial}{\partial \xi_i} Vol(\B(\xi,w_\eps)) = \int_{S_\xi} (1 + w_\eps)^{N-1} 
 \left(  v_{i,\eps} + c_{i,\eps}\right) d \sigma. 
$$
Since we know that $\| v_{i,\eps} \|_{C^{1,\beta}(S_\xi )} 
\leq C \eps^{2s+1}$, it follows from the latter formula that also $|c_{i,\eps}| \leq  C \eps^{2s+1}$. 
Therefore one deduces  
\begin{equation}\label{eq:est-der-w}
  \left\| \partial_{\xi_i} w_\eps \right\| _{C^{1,\beta}(S_\xi)}\le C\eps^{2s+1}, 
\end{equation}
which is the desired conclusion, possibly relabelling the constant $C$. 
\end{proof}

\

We next show how to find $\xi$'s so that the Lagrange multipliers $\lambda_i$ 
in the statement of Proposition \ref{LyapunovSchmidt} vanish,  thus obtaining
surfaces with constant relative fractional mean curvature.

\begin{prop}\label{criticoimplicaNMC}
Let $w_\eps:S_\xi \rightarrow \R$ given by Proposition \ref{LyapunovSchmidt}, and for $\xi \in 
\Theta_\eps$ define $\Phi_\xi :=P_s^{\Omega_\eps}(\B(\xi,w_\eps))$. Then, for $\eps>0$ sufficiently small, if $\nabla_\xi {\Phi_{\xi}}_{|_{\xi=\bar{\xi}}}=~0$ for some $\bar{\xi}\in \Theta_\eps$, one has 
\[
H_s^{\Omega_\eps}(\partial \B(\bar{\xi},w_\eps))\equiv c,
\]
where $c=c(\eps, \bar{\xi})$.
\end{prop}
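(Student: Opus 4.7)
The idea is that $\nabla_\xi \Phi_\xi = 0$ forces a non-degenerate linear system on the Lagrange multipliers $\lambda_i$ to have only the trivial solution.

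\medskip

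\noindent\textbf{Step 1: Differentiate $\Phi_\xi$ via the first variation formula.} As in \eqref{eq:xi-t}, fix an index $j$ and let $\xi(t) = \xi + t e_j$. The normal velocity of the family $\partial \B(\xi(t), w_\eps(\xi(t)))$ at $t=0$ is given by \eqref{eq:normal-i-w}, namely
\[
  V_j := \nu_{w_\eps}\cdot e_j + \frac{\partial w_\eps(\xi)}{\partial \xi_j}(x-\xi)\cdot \nu_{w_\eps}.
\]
Applying the first variation formula \eqref{firstvar} for the relative $s$-perimeter yields
\[
  \frac{\partial \Phi_\xi}{\partial \xi_j}
  = \int_{\partial \B(\xi,w_\eps)} H_s^{\Omega_\eps}(\partial \B(\xi,w_\eps))\, V_j \, d\sigma.
\]

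\medskip

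\noindent\textbf{Step 2: Insert the reduced equation and use the volume constraint.} By Proposition \ref{LyapunovSchmidt}, $H_s^{\Omega_\eps}(\partial \B(\xi,w_\eps)) = c + \sum_{i=1}^N \lambda_i Y_i$. Differentiating the volume constraint $\mathrm{Vol}(\B(\xi,w_\eps))=\omega_N$ with respect to $\xi_j$ gives
\[
  \int_{\partial \B(\xi,w_\eps)} V_j \, d\sigma = 0,
\]
so the contribution of the constant $c$ to $\partial_{\xi_j}\Phi_\xi$ vanishes. Setting $\xi=\bar\xi$ and $\nabla_\xi \Phi_{\bar\xi}=0$ leads to the linear system
\[
  \sum_{i=1}^{N} M_{ij}(\bar\xi)\,\lambda_i = 0, \qquad j=1,\dots,N,
\]
with
\[
  M_{ij}(\bar\xi) := \int_{\partial \B(\bar\xi,w_\eps)} Y_i \, V_j \, d\sigma.
\]

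\medskip

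\noindent\textbf{Step 3: Show $M(\bar\xi)$ is invertible for small $\eps$.} At $\eps=0$ we have $w_\eps\equiv 0$ and $\partial_{\xi_j} w_\eps \equiv 0$, so $V_j=\nu\cdot e_j$ on the unit sphere $S_{\bar\xi}$. Since the $Y_i$ are (zero-homogeneous extensions of) the first spherical harmonics $Y_i(x)=(x-\bar\xi)_i$ on $S_{\bar\xi}$, and $\nu(x)\cdot e_j = (x-\bar\xi)_j$, orthogonality of spherical harmonics yields
\[
  M_{ij}(\bar\xi)\big|_{\eps=0} = \int_{S_{\bar\xi}} (x-\bar\xi)_i(x-\bar\xi)_j\, d\sigma = \frac{|S^{N-1}|}{N}\delta_{ij},
\]
which is a (nonzero) multiple of the identity. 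Thanks to the estimates $\|w_\eps\|_{C^{1,\beta}(S_\xi)}\le C\eps^{2s}$ and $\|\partial_{\xi_j} w_\eps\|_{C^{1,\beta}(S_\xi)}\le C\eps^{2s+1}$ from Proposition \ref{LyapunovSchmidt}, the matrix $M(\bar\xi)$ is a small perturbation of $\frac{|S^{N-1}|}{N}I$, and is therefore invertible uniformly in $\xi\in \Theta_\eps$ for $\eps$ sufficiently small.

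\medskip

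\noindent\textbf{Step 4: Conclude.} Invertibility of $M(\bar\xi)$ forces $\lambda_1=\cdots=\lambda_N=0$, so by Proposition \ref{LyapunovSchmidt} we have $H_s^{\Omega_\eps}(\partial \B(\bar\xi,w_\eps))\equiv c$, as required.

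\medskip

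The main technical point to be careful with is Step 1: identifying the first-variation velocity in the right way, i.e.\ that the combined effect of translating the center $\xi$ and deforming by $w_\eps$ produces precisely the normal velocity $V_j$ from \eqref{eq:normal-i-w}, so that \eqref{firstvar} applies and no tangential ``boundary'' terms appear. Once this is in place, Steps 2--4 are essentially algebraic, with the invertibility in Step 3 being a perturbative consequence of Proposition \ref{LyapunovSchmidt}.
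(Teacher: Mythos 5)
Your proposal is correct and follows essentially the same route as the paper: compute $\partial_{\xi_j}\Phi$ via the first variation formula \eqref{firstvar} with the normal velocity \eqref{eq:normal-i-w}, kill the contribution of the constant $c$ using conservation of volume, substitute $H_s^{\Omega_\eps}=c+\sum_i\lambda_i Y_i$ to obtain a linear system $M(\bar\xi)\lambda=0$, and then show that $M(\bar\xi)$ is a small perturbation of (a multiple of) the identity using the smallness of $w_\eps$ and $\partial_\xi w_\eps$ from Proposition \ref{LyapunovSchmidt}. The only cosmetic difference is that you spell out the limit matrix as $\tfrac{|S^{N-1}|}{N}\,\delta_{ij}$ rather than absorbing the normalization of the spherical harmonics into a $\delta_{ij}+o_\eps(1)$ statement, which is what the paper does.
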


\begin{proof}

Recall that $w_\eps = w_\eps(\xi)$ solves 
\[
Vol(\B(\xi,w_\eps)) = \omega_{N}\quad \text{and} \quad P_{\overline{W}}(H_s^{\Omega_\eps}(\partial \B(\xi,w_\eps))=0 \quad \qquad \text{for all}\; \xi \in \mathbb{R}^N.
\]
Since $Vol(\B(\xi,w_\eps)) = \omega_N$ for any choice of $\xi$, it follows that the integral 
over $\partial \mathbb{B}(\xi, w_\eps(\xi))$ of the normal variation vanishes, i.e., recalling 
\eqref{eq:normal-i-w}, we have for $\xi = \bar{\xi}$
\begin{equation}\label{eq:cons-vol}
  \int_{\partial \mathbb{B}(\xi, w_\eps(\xi))} \left[ \nu_{w_\eps} \cdot e_i + \frac{\partial w_\eps(\xi)}{\partial {\xi_i}} (x-\xi) \cdot \nu_{w_\eps} \right] d \sigma_{w_\eps} = 0,   
\end{equation}
where  $d \sigma_{w_\eps}$stands for the area element of $\partial \mathbb{B}(\xi, w_\eps(\xi))$. 

For the same reason, recalling \eqref{firstvar} and \eqref{eq:xi-t}, we have that 
$$
 \frac{\di}{\di t}|_{t=0} P_s^{\Omega_\eps} (\partial \B(\xi(t), w_\eps(\xi(t)))) = 
  \int_{\partial \mathbb{B}(\xi, w_\eps(\xi))}  H_s^{\Omega_\eps}(\partial \B(\bar{\xi},w_\eps))
  \left[ \nu_{w_\eps} \cdot e_i + \frac{\partial w_\eps(\xi)}{\partial {\xi_i}} (x-\xi) \cdot \nu_{w_\eps} \right] d \sigma_{w_\eps}. 
$$
By our choice of $\bar{\xi}$ we have that, for all $i = 1, \dots, N$
$$
  \frac{\partial}{\partial \xi}_i|_{\xi = \bar{\xi}} \Phi_\xi = 0. 
$$
Recalling also that by Proposition \ref{LyapunovSchmidt}, 
$H^{\Omega_\eps}_s(\partial \B(\xi,w_\eps))=c+\sum_{i=1}^N{\lambda_i Y_i}$ (see Section \ref{sec2} 
for the definition of the first-order sphereical harmonics $Y_i$),  
from \eqref{eq:cons-vol} we have that for all $i = 1, \dots,  N$ 
\begin{equation}\label{eq:ij}
0 = \int_{\partial \mathbb{B}(\xi, w_\eps(\xi))}  \left( \sum_{j=1}^N{\lambda_j Y_j} \right)
   \left[ \nu_{w_\eps} \cdot e_i + \frac{\partial w_\eps(\xi)}{\partial {\xi_i}} (x-\xi) \cdot \nu_{w_\eps} \right] d \sigma_{w_\eps}. 
\end{equation}
Notice that by the estimates on $w_\eps$ and $\partial_\xi w_\eps$ in Proposition \ref{LyapunovSchmidt} and by 
the fact that $\nu \cdot {\bf e_i} = Y_i$ on the unit sphere $S$, 
one has 
$$
 \int_{\partial \mathbb{B}(\xi, w_\eps(\xi))} Y_j 
     \left[ \nu_{w_\eps} \cdot e_i + \frac{\partial w_\eps(\xi)}{\partial {\xi_i}} (x-\xi) \cdot \nu_{w_\eps} \right] d \sigma_{w_\eps} 
     = \delta_{ij} + o_\eps(1); \qquad i, j = 1, \dots, N.
$$
Therefore the system \eqref{eq:ij} implies the vanishing of all $\lambda_j$'s, 
%
%
%
which gives the desired conclusion. 
\end{proof}

%

\

The next step is to show that fractional perimeter of $B_1(\xi)$ is sufficiently close to fractional perimeter of the deformed ball $\B(\xi,w_\eps)$, 
also when differentiating with respect to $\xi$. 

\begin{prop}\label{persdevelopment}
Let $w_\eps$ be as Proposition \ref{criticoimplicaNMC}.  The following Taylor expansion holds:
\begin{equation}\label{sviluppoPerimetroPalladeformata}
P_s^{\Omega_\eps}(\B(\xi,w_\eps))=P_s^{\Omega_\eps}(B_1(\xi))+O(\eps^{4s}).
\end{equation}
Moreover one has 
\begin{equation}\label{der-sviluppoPerimetroPalladeformata}
\frac{\partial}{\partial \xi_i} P_s^{\Omega_\eps}(\B(\xi,w_\eps))= \frac{\partial}{\partial \xi_i} 
P_s^{\Omega_\eps}(B_1(\xi))+O(\eps^{1+4s}).
\end{equation}
\end{prop}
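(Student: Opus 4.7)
The plan is to Taylor-expand the smooth functional $H(\xi, w) := P_s^{\Omega_\eps}(\mathbb{B}(\xi, w))$ in the $w$-variable at $w = 0$, using that $w \mapsto H(\xi, w)$ is of class $C^\infty$ in a neighborhood of the origin in $C^{1,\beta}(S_\xi)$ (a consequence of the smoothness assertion in Proposition \ref{p:invert-orth} combined with the first variation formula \eqref{firstvar}). The supporting tools are Lemma \ref{Aepscond}, the bounds $\|w_\eps\|_{C^{1,\beta}(S_\xi)} = O(\eps^{2s})$ and $\|\partial_\xi w_\eps\|_{C^{1,\beta}(S_\xi)} = O(\eps^{1+2s})$ of Proposition \ref{LyapunovSchmidt}, and the volume constraint $\mathrm{Vol}(\mathbb{B}(\xi, w_\eps)) = \omega_N$.

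For \eqref{sviluppoPerimetroPalladeformata} I Taylor-expand to first order in $w$:
\begin{equation*}
H(\xi, w_\eps) - H(\xi, 0) = \int_{S_\xi} H_s^{\Omega_\eps}(S_\xi)\, w_\eps \,d\sigma + O\bigl(\|w_\eps\|^2_{C^{1,\beta}(S_\xi)}\bigr).
\end{equation*}
By Lemma \ref{Aepscond} the linear term equals $c_{N,s} \int_{S_\xi} w_\eps\,d\sigma + O(\eps^{2s}\|w_\eps\|_{L^1}) = c_{N,s} \int_{S_\xi} w_\eps\,d\sigma + O(\eps^{4s})$, and expanding the volume constraint in $w_\eps$ yields $\int_{S_\xi} w_\eps\,d\sigma = O(\|w_\eps\|^2_{L^2}) = O(\eps^{4s})$; both the linear term and the quadratic remainder are thus $O(\eps^{4s})$, proving \eqref{sviluppoPerimetroPalladeformata}.

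For \eqref{der-sviluppoPerimetroPalladeformata} the chain rule gives $\partial_{\xi_i} H(\xi, w_\eps(\xi)) - \partial_{\xi_i} H(\xi, 0) = (\mathrm{I}) + (\mathrm{II})$, where $(\mathrm{I})$ is the difference between $\partial_{\xi_i} H(\xi, w)$ evaluated at $w = w_\eps$ (with $w$ held fixed) and at $w = 0$, and $(\mathrm{II}) := \partial_w H(\xi, w_\eps)[\partial_{\xi_i} w_\eps]$. For $(\mathrm{II})$ I invoke $H_s^{\Omega_\eps}(\partial \mathbb{B}(\xi, w_\eps)) = c + \sum_j \lambda_j Y_j$ from Proposition \ref{LyapunovSchmidt} (with $|c - c_{N,s}|, |\lambda_j| = O(\eps^{2s})$ by Lemma \ref{Aepscond} and smoothness): the constant piece $c \int \partial_{\xi_i} w_\eps (x \cdot \nu_{w_\eps})\,d\sigma_{w_\eps}$ vanishes because differentiating $\mathrm{Vol}(\mathbb{B}(\xi, w_\eps(\xi))) = \omega_N$ in $\xi_i$, combined with $\int (\nu_{w_\eps} \cdot e_i)\,d\sigma_{w_\eps} = 0$ (divergence theorem), forces $\int \partial_{\xi_i} w_\eps (x \cdot \nu_{w_\eps})\,d\sigma_{w_\eps} = 0$; the $\lambda_j$-pieces are bounded by $|\lambda_j| \,\|\partial_{\xi_i} w_\eps\|_{L^2} = O(\eps^{2s}) \cdot O(\eps^{1+2s}) = O(\eps^{1+4s})$. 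For $(\mathrm{I})$, translation invariance of $P_s^{\mathbb{R}^N}$ gives $\partial_{\xi_i} H(\xi, w) = -\int_{\partial \mathbb{B}(\xi, w)} V(z)\, (e_i \cdot \nu_w)\,d\sigma_w$ with $V(z) := \int_{\mathbb{R}^N \setminus \Omega_\eps} |z - y|^{-(N+2s)}\,dy$; since each $\xi$-derivative of $V$ gains a factor $\eps$, Taylor-expanding $V$ around $\xi$ produces a zeroth-order term killed by the divergence theorem, a first-order term equal to $\omega_N\, \partial_{\xi_i} V(\xi)$ via the identity $\int (z-\xi)^j (e_i \cdot \nu_w)\,d\sigma_w = \delta_{ij}\, \mathrm{Vol}(\mathbb{B}(\xi, w))$ (the same value for $w = 0$ and $w = w_\eps$ by volume preservation, so canceling in the difference), and a quadratic remainder of size $O(\eps^{2+2s})$ paired with shape-moment discrepancies of size $O(\|w_\eps\|) = O(\eps^{2s})$, totalling $O(\eps^{2+4s})$.

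The main obstacle is controlling $(\mathrm{I})$: a direct smoothness bound would yield only $O(\eps^{4s})$, insufficient for the claim. The extra $\eps$ must be extracted by first routing the full $\xi$-dependence of $\partial_{\xi_i} H(\xi, w)$ through the nonlocal potential $V$ using translation invariance, and then exploiting the volume constraint to annihilate the otherwise $O(\eps^{1+2s})$-sized linear Taylor term of $V$ around $\xi$. This exact algebraic cancellation, tied to volume preservation, is what improves the bound from $\eps^{4s}$ down to $\eps^{1+4s}$.
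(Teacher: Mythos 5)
Your proof is correct, and it follows the same general strategy as the paper (Taylor expansion in $w_\eps$ on top of the bounds $\|w_\eps\| = O(\eps^{2s})$, $\|\partial_\xi w_\eps\| = O(\eps^{1+2s})$, together with the volume constraint), but it is considerably more explicit, particularly for the derivative estimate \eqref{der-sviluppoPerimetroPalladeformata}. The paper's proof of the second estimate is a one-line Taylor expansion with no indication of where the crucial cancellations come from; your splitting into $(\mathrm{I})$ (explicit $\xi$-dependence with $w$ frozen) and $(\mathrm{II})$ (the $\partial_\xi w_\eps$ piece), and the identification that $(\mathrm{II})$ is controlled only because the constant mode of $H_s^{\Omega_\eps}(\partial\B(\xi,w_\eps))$ is killed by the conserved volume while the $\lambda_j$ are $O(\eps^{2s})$, fills a genuine gap in the exposition. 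The translation-invariance trick — routing $\partial_{\xi_i} H(\xi,\cdot)$ entirely through the nonlocal potential $V$ — is also a clean way to handle $(\mathrm{I})$. One remark: for $(\mathrm{I})$ the refined cancellation via the first-order moment identity is not actually needed. From translation invariance one has
\[
(\mathrm{I}) = -\int_{\mathbb{B}(\xi,w_\eps)} \partial_i V(z)\di z + \int_{B_1(\xi)} \partial_i V(z)\di z = \mp\int_{\mathbb{B}(\xi,w_\eps)\,\triangle\, B_1(\xi)} \partial_i V(z)\di z,
\]
and since $|\partial_i V| = O(\eps^{1+2s})$ on a neighborhood of $S_\xi$ while $|\mathbb{B}(\xi,w_\eps)\triangle B_1(\xi)| = O(\|w_\eps\|) = O(\eps^{2s})$, one gets $(\mathrm{I}) = O(\eps^{1+4s})$ directly, which is all that is required. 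Your sharper bound $O(\eps^{2+2s})$ is correct but not necessary here, so the ``main obstacle'' you flag is really resolved the moment one writes $(\mathrm{I})$ in terms of $V$; the extra cancellation from volume preservation is a bonus rather than the essential step.
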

\begin{proof}
Thanks to the first statement of Lemma \ref{Aepscond}, following the notation in Section \ref{sec2}, we get that
\begin{equation}
\begin{aligned}
P_s^{\Omega_\eps}(\mathbb{B}(\xi, w_\eps))&=P_s^{\Omega_\eps}(B_1(\xi))+(P_s^{\Omega_\eps})'[w_\eps]+P_s^{\Omega_\eps}(\mathbb{B}(\xi,w_\eps))-
(P_s^{\Omega_\eps})'[w_\eps]-P_s^{\Omega_\eps}(B_1(\xi))\\
&=P_s^{\Omega_\eps}(B_1(\xi))+O(\eps^{4s})+\int_0^1\Big((P_s^{\Omega_\eps})'(t \, w_\eps)-(P_s^{\Omega_\eps})'(0)\Big)[w_\eps]\di t,
\end{aligned}
\end{equation}
where $(P_s^{\Omega_\eps})'$ is defined as in the formula after \eqref{firstvar}.

Using the fact that the $s$-mean curvature is smooth, we deduce then that 
$$
\int_0^1\Big((P_s^{\Omega_\eps})'(t \, w_\eps)-(P_s^{\Omega_\eps})'(0)\Big)[w_\eps]\di t=O(\eps^{4s}),
$$
so the last two formulas imply \eqref{sviluppoPerimetroPalladeformata}.

To prove \eqref{der-sviluppoPerimetroPalladeformata}, we use the estimate $\|\partial_\xi w_\eps\|_{C^{1,\beta}(S_\xi)}\le C \eps^{2s+1}$ from Proposition \ref{LyapunovSchmidt}. Calling $\tau_i$ the quantity in  \eqref{eq:normal-i-w} and 
recalling
the notation from Section \ref{sec2}, we write that 
$$
  \frac{\partial}{\partial \xi_i} P_s^{\Omega_\eps}(\B(\xi,w_\eps)) = (P_s^{\Omega_\eps})'(w_\eps) [\tau_i]. 
$$
Taylor-expanding the latter quantity we can write that 
\begin{equation}
\begin{aligned}
\frac{\partial}{\partial \xi_i} P_s^{\Omega_\eps}(\B(\xi,w_\eps)) & =   (P_s^{\Omega_\eps})'(0)  [\tau_i]
+ (P_s^{\Omega_\eps})''(0)  [\tau_i] \\ 
& =  \frac{\partial}{\partial \xi_i} 
P_s^{\Omega_\eps}(B_1(\xi)) + O(\eps^{1+4s}).
\end{aligned}
\end{equation}
This concludes the proof. 
\end{proof}

\

\begin{proof}[Proof of Theorem \ref{mainth}]
Suppose $x_0$ is a strict local extremal of $V_\Omega$, without loss of generality a minimum. 
Then there exists an open set $\Upsilon \subset\subset \Omega$ such that 
$V_\Omega(x_0) < \inf_{\partial \Upsilon} V_\Omega - \delta$ for some $\delta > 0$. 
  Let $\Phi_{\xi}$ be defined as in Proposition \ref{criticoimplicaNMC}: by the 
  estimates \eqref{developm} and \eqref{sviluppoPerimetroPalladeformata} 
  it follows that 
\begin{equation}\label{eq:exp-Phi-V}
  \Phi_{\bar{x}} = 
  P_s^{\R^N}(B_1(\bar{x}))-\omega_N \eps^{2s} V_\Omega(\eps \bar{x})+O(\eps^{1+2s}),
\end{equation}
which implies that for $\eps$ sufficiently small 
$$
  \Phi_{\frac{x_0} \eps} < \inf_{\frac{1}{\eps}  \, \partial \Upsilon} \Phi.  
$$
As a consequence $\Phi_{\cdot}$ attains a minimum in the dilated domain $\frac{1}{\eps} 
\Upsilon$, and the conclusion follows from Proposition \ref{criticoimplicaNMC}.

Suppose now that $x_0$ is  a non-degenerate critical point of $V_\Omega$. 
Recalling the definition and properties of topological degree (see e.g. Chapter 3 in \cite{AM}), 
from \eqref{developm-der} and \eqref{der-sviluppoPerimetroPalladeformata} 
one can find an open set $\tilde{\Upsilon} \subset\subset \Omega$ such that 
$$
  \hbox{deg} \left( \nabla \Phi,  \frac{1}{\eps}\tilde{\Upsilon}, 0 \right) \neq 0.
$$
This implies that $\Phi_{\xi}$ has a critical point in $\frac{1}{\eps} 
\tilde{\Upsilon}$, and the conclusion again follows  from Proposition \ref{criticoimplicaNMC}.

Since in both cases the sets $\Upsilon$ and $\tilde{\Upsilon}$ containing $x_0$ can be taken arbitrarily small, 
the localization statement in the theorem is also proved.  
\end{proof}

\

\begin{remark}
From \cite{AM2}*{Theorem $2.24$} one has a relation between the Morse index of a critical point as found 
in Proposition \ref{criticoimplicaNMC} and the Morse index of the corresponding critical point of $\Phi$. 
In our case, since round spheres are global minimizers for the $s$-perimeter relative to $\R^N$, 
these two indices coincide. 
\end{remark}

\

To prove Corollary \ref{Maincoro}, we need the following Lemma.

\begin{lemma}\label{comportamentoVomega}
For all $x\in \partial \Omega$ one has 
\[
\lim_{y\rightarrow x}V_\Omega(y)=+\infty, 
\]
and
\[
\lim_{\Omega \ni y\rightarrow x}\nabla V_\Omega(y)\cdot \nu (x)=+\infty,
\]
where $\nu$ denotes the outer unit normal to $\partial \Omega$.
\end{lemma}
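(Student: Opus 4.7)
The plan is to localize near the boundary point $x$ and compare $V_\Omega$ to its model value when $\Omega^C$ is replaced by the tangent half-space at $x$. By smoothness of $\partial\Omega$, I would fix coordinates at $x$ with $\nu(x)=e_N$, so that $\partial\Omega$ is locally the graph $\{z_N=f(z')\}$ with $f(0)=0$ and $\nabla f(0)=0$, and $\Omega^C$ lies on the side $\{z_N>f(z')\}$. Writing $d(y):=\dist(y,\partial\Omega)$, the targets become $V_\Omega(y)\gtrsim d(y)^{-2s}$ and $\nabla V_\Omega(y)\cdot\nu(x)\gtrsim d(y)^{-2s-1}$ as $y\to x$, which force the stated divergences. (If $y$ lies in the interior of $\Omega^C$ the first claim is trivial, since the singularity of the integrand at $z=y$ is not integrable when $s>0$; so the interesting case is $y\in\Omega$.)

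For the first claim I would split $V_\Omega(y)=I_{\mathrm{loc}}(y)+I_{\mathrm{far}}(y)$, with $I_{\mathrm{loc}}$ the contribution from a small neighborhood $U$ of $x$ and $I_{\mathrm{far}}$ the rest (which is uniformly bounded). The uniform exterior ball condition granted by smoothness of $\partial\Omega$ produces a ball $B_\rho(x+\rho\,\nu(x))\subset\Omega^C$ tangent at $x$, so that $\Omega^C\cap U$ essentially contains a half-ball in the flattened coordinates. The explicit computation
\[
\int_{\{z_N>0\}}\frac{dz}{\bigl((z_N+d)^2+|z'|^2\bigr)^{(N+2s)/2}}=\frac{C_{N,s}}{d^{2s}},
\]
done by the substitution $z'=(z_N+d)\xi$ followed by integrating $(z_N+d)^{-1-2s}$ in $z_N$, gives the lower bound. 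For the second claim, I would differentiate under the integral sign to obtain
\[
\nabla V_\Omega(y)\cdot\nu(x)=(N+2s)\int_{\Omega^C}\frac{(z-y)\cdot\nu(x)}{|z-y|^{N+2s+2}}\,dz,
\]
observe that for $y\in\Omega$ sufficiently close to $x$ almost every $z\in\Omega^C\cap U$ satisfies $(z-y)\cdot\nu(x)>0$, and run the analogous half-space computation for this differentiated kernel. The extra factor $|z-y|^{-2}$ precisely shifts the scaling from $d^{-2s}$ to $d^{-2s-1}$, while the far contribution and any negative piece near $\partial U$ remain uniformly bounded.

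The main technical point is making the half-space approximation quantitative enough to absorb the curvature of $\partial\Omega$. Since $f(z')=O(|z'|^2)$, the symmetric difference between $\{z\in U:z_N>0\}$ and $\Omega^C\cap U$ (after the flattening change of coordinates, whose Jacobian is $1+O(|z|)$) contributes to both integrals at a strictly lower order in $d$ than the leading terms $d^{-2s}$ and $d^{-2s-1}$. The interior and exterior ball conditions provide the clean monotone comparisons that sidestep any delicate cancellations, so that the whole argument ultimately reduces to the two elementary half-space integrals above.
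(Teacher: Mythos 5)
Your argument is correct and follows essentially the same blow-up / half-space comparison strategy as the paper: the paper rescales by $d=\dist(y,\partial\Omega)$, observes that the rescaled exterior converges to a half-space, and reads off the divergence, while you make this quantitative by localizing, invoking the uniform exterior ball condition, and computing the model half-space integral explicitly. One point in your favor: the exponents you obtain, $V_\Omega(y)\sim d^{-2s}$ and $\nabla V_\Omega(y)\cdot\nu\sim d^{-2s-1}$, are the correct ones. The paper's printed proof states the asymptotic as $d^{-N-2s}$, which is a typo — the Jacobian $d^N$ from the change of variables $y\mapsto dy'$ cancels part of the $d^{-(N+2s)}$ from the kernel, leaving $d^{-2s}$ — and its asserted limit $\int_{(\mathbb{R}_+^N)^C}|y'|^{-N-2s}\di y'$ is in fact divergent as written (the observation point must be kept at unit distance from the limiting hyperplane, which is exactly the finite integral $\int_{\{z_N>0\}}\bigl((z_N+1)^2+|z'|^2\bigr)^{-(N+2s)/2}\di z$ that your explicit computation produces). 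So your version, besides being correct, is the cleaner statement of what the paper's sketch intends.
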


\begin{proof}
Letting $d:=\dist(x,\partial \Omega)$ for  $x\in \Omega$, thanks to the change of variables $x'=\frac{x}{d}$, we get that
\begin{equation}
V_\Omega (x)=\int_{\Omega^C}\dfrac{1}{|x-y|^{N+2s}}\di y=\int_{(\Omega/d)^C} \frac{1}{|dx'-y'|^{N+2s}}\di y',
\end{equation}
from which, setting $\mathbb{R}_+^N=\{x\in \mathbb{R}^N : x_N>0\}$, we have
\[
\int_{(\Omega/d)^C} \frac{1}{|dx'-y'|^{N+2s}}\di y' \rightarrow \int_{(\mathbb{R}_+^N)^C} \frac{1}{|y'|^{N+2s}}\di y'<+\infty 
\qquad \hbox{ if } d \rightarrow 0, 
\]
i.e.\ $V_\Omega$ behaves asymptotically as $d^{-N-2s}$ when $d \to 0$. With a similar proof, one finds that the component of $\nabla V_\Omega$ 
normal to $\partial \Omega$ behaves as $d^{-N-2s-1}$.
\end{proof} 

\

\begin{proof}[Proof of Corollary \ref{Maincoro}]
Given $\delta > 0$ small enough, let us define the set $\Omega^\delta \subseteq \Omega$ 
by 
$$
  \Omega^\delta = \left\{ x \in \Omega \; : \; d(x, \partial \Omega) > \delta \right\}. 
$$
From Remark \ref{comportamentoVomega} we have
\[
(\nabla V_\Omega, \nu_{\Omega^\delta}) >  0 \quad \text{on}\; \partial \Omega^\delta.
\] 
As in the proof of Theorem \ref{mainth}, it turns out that 
\[
(\nabla  \Phi_{\cdot}, \nu_{\frac{1}{\eps} \, \Omega^\delta}) >  0 \quad \text{on}\; 
\partial \frac{1}{\eps} \left(  \Omega^\delta \right).
\]
Clearly, since $\bar{\Omega}$ is compact, the $(PS)$-condition holds. So the conclusion follows from Theorem \ref{LSthm} and Remark \ref{r:bdry}.
\end{proof}

\

\begin{remark}\label{ossne}
It is interesting to see how the geometry of the domain (and not just the topology, as in Corollary \ref{Maincoro}) plays a  role in order to obtain either uniqueness of 
multiplicity of solutions. 

In the Appendix  we will prove uniqueness for the unit ball $B_1$, i.e.\ we will show that $V_{B_1}$ has a unique critical point at the origin which is a non-degenarate minimum.

Secondly, we will give an example of dumb-bell domain, topologically equivalent to a ball, such that the reduced functional $\Phi_\xi$ (defined as in Proposition \ref{criticoimplicaNMC}) has at least three critical points, while Corollary \ref{Maincoro} would give us only one solution.

\section{Proof of Theorem \ref{Mainthm2}}\label{sec4}

Let us consider a bounded open set with smooth boundary $\Omega \subseteq \mathbb{R}^N$,  and $s\in (0,1/2)$.

First of all we point out that, using the direct method of Calculus of Variations and the Sobolev embeddings (which hold for fractional spaces too, see e.g. \cite{DNPV}), it is easy to show that there exist minimizers for 
\begin{equation}\label{esistenza}
\{P_s(E,\Omega), |E|=m \}\quad m\in (0,+\infty).
\end{equation}
Our goal is to prove that minimizers exist also relatively to half-spaces, and to  
 characterize them to some extent.  
 
Let $s\in (0,1/2)$ and $E\subset \mathbb{R}^N$ be a measurable set: recall from $\eqref{PersE}$ that
\begin{equation}\label{F}
P_s(E,\mathbb{R}_+^N):=\int_E \int_{\mathbb{R}_+^N \setminus E} \frac{\di x \di y}{|x-y|^{N+2s}},
\end{equation}
where $\mathbb{R}_+^N=\{ x\in \mathbb{R}^N : x_N>0\}$ is the half-space.
We begin by studying minimizers of
\begin{equation}\label{P}
\{P_s(E,\mathbb{R}_+^N):\;E \subseteq B_R^+,\; |E|=m \}\quad m\in (0,+\infty),
\end{equation}
with $B_R^+:=B_R \cap \mathbb{R}_+^N$  denoting the half ball of large radius $R>0$ centred at the origin.
Without loss of generality   we can assume that $m=1$ and, since we look for minimizers in a half-ball, we can assume that $E$ is closed.
With completely similar arguments, one can also prove the following result.

\begin{prop}
Problem $\eqref{P}$ admits a minimizer.
\end{prop}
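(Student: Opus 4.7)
The plan is to implement the direct method of the Calculus of Variations, exactly as suggested by the remark preceding the statement. Take a minimizing sequence $\{E_n\} \subset B_R^+$ with $|E_n|=m$ and $P_s(E_n,\mathbb{R}_+^N) \searrow \inf \eqref{P}$. The starting observation is the symmetrization identity
\[
[\chi_{E_n}]_{H^s(\mathbb{R}_+^N)}^2 \;=\; 2\, P_s(E_n,\mathbb{R}_+^N),
\]
combined with $\|\chi_{E_n}\|_{L^2(\mathbb{R}_+^N)}^2 = |E_n| = m$. Thus $\{\chi_{E_n}\}$ is uniformly bounded in $H^s(\mathbb{R}_+^N)$.

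Next I exploit the confinement $E_n \subset B_R^+$. Since $B_R^+$ is a bounded Lipschitz domain, the fractional Sobolev embedding $H^s(B_R^+) \hookrightarrow\hookrightarrow L^2(B_R^+)$ of \cite{DNPV} is compact. Extracting a subsequence, I get $\chi_{E_n} \to u$ in $L^2(B_R^+)$ and a.e., and since the $\chi_{E_n}$ are characteristic functions the a.e.\ limit $u$ must equal some $\chi_E$ with $E \subset \overline{B_R^+}$. The $L^1$-convergence passes the volume constraint to the limit: $|E| = \lim_n |E_n| = m$. Extending $\chi_{E_n}$ by zero outside $B_R^+$, this convergence also holds on all of $\mathbb{R}_+^N$.

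For lower semicontinuity, I pass to a further subsequence so that $\chi_{E_n}(x)\,\chi_{\mathbb{R}_+^N\setminus E_n}(y) \to \chi_E(x)\,\chi_{\mathbb{R}_+^N\setminus E}(y)$ for $\mathcal{L}^{2N}$-a.e.\ $(x,y) \in \mathbb{R}_+^N \times \mathbb{R}_+^N$ (by Fubini applied to the a.e.\ convergence of $\chi_{E_n}$). Fatou's lemma applied directly to the double integral in \eqref{F} then yields
\[
P_s(E,\mathbb{R}_+^N) \;\le\; \liminf_{n\to\infty} P_s(E_n,\mathbb{R}_+^N) \;=\; \inf \eqref{P},
\]
so $E$ is the sought minimizer, and may be taken closed by adjusting on a null set.

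I expect no serious obstacle: the only mild point to check is that taking $\chi_{E_n} \equiv 0$ outside $B_R^+$ is legitimate for the $H^s(\mathbb{R}_+^N)$ estimate, which is immediate since $B_R^+$ is compactly contained in $\overline{\mathbb{R}_+^N}$ in the relevant sense. Otherwise the argument is entirely parallel to the existence proof sketched just above for $\{P_s(E,\Omega) : |E|=m\}$ on a general bounded smooth domain.
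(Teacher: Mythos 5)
Your argument is correct and is exactly the approach the paper alludes to when it invokes the direct method together with the compact fractional Sobolev embedding from \cite{DNPV}: bound $\chi_{E_n}$ in $H^s(B_R^+)$ via the $H^s(\mathbb{R}^N_+)$-seminorm identity, extract an $L^2$- and a.e.-convergent subsequence, identify the limit as a characteristic function preserving the volume constraint, and close with Fatou for lower semicontinuity. The only inessential imprecision is the remark that $B_R^+$ is ``compactly contained in $\overline{\mathbb{R}_+^N}$''; what is actually used is just that $B_R^+$ is a bounded Lipschitz domain, which suffices for the compact embedding, and that $E_n\subseteq B_R^+$, which makes the zero extension of $\chi_{E_n}$ tautological.
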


%
%
%

\

We have next the following lemma. 

\

\begin{lemma}\label{l:touch}
If $E$ is a minimizer for $\eqref{P}$, then $E$ intersects the plane $\{z_N=0\}$.
\end{lemma}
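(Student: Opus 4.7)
The plan is to argue by contradiction: suppose the minimizer $E$ (which is closed by hypothesis) satisfies $\delta:=\dist(E,\{x_N=0\})>0$. I would show that a small downward translation of $E$ produces an admissible competitor with strictly smaller fractional perimeter, contradicting minimality.

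For $t\in(0,\delta)$ set $E_t:=E-te_N$. Then $|E_t|=m$ automatically. To see that $E_t\subset B_R^+$, note that $(x-te_N)_N=x_N-t\geq\delta-t>0$, while
$|x-te_N|^2=|x|^2-t(2x_N-t)\leq|x|^2\leq R^2$
since $2x_N-t\geq 2\delta-t>0$ on $E$; hence the downward translation both keeps $E_t$ in the open half-space and moves it strictly inward with respect to the spherical part of $\partial B_R^+$.

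A change of variables $(x,y)\mapsto(x-te_N,y-te_N)$ in \eqref{F} turns $P_s(E_t,\mathbb{R}_+^N)$ into the integral of $|x-y|^{-N-2s}$ over $E\times(\{y_N>t\}\setminus E)$. Subtracting from $P_s(E,\mathbb{R}_+^N)$ yields
\begin{equation*}
P_s(E,\mathbb{R}_+^N)-P_s(E_t,\mathbb{R}_+^N)=\int_E\int_{\{0<y_N\leq t\}\setminus E}\frac{\di x\,\di y}{|x-y|^{N+2s}}.
\end{equation*}
Because $E\subset\{y_N\geq\delta\}$ and $t<\delta$, the strip $\{0<y_N\leq t\}$ is disjoint from $E$; for $x\in E$ the integrand is bounded on the strip and decays like $|y|^{-N-2s}$ as $|y|\to\infty$, so the right-hand side is a strictly positive finite number. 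This contradicts the minimality of $E$.

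The only delicate point is the admissibility check $E_t\subset B_R^+$, handled above using the elementary geometric observation that a downward shift moves points of the upper hemisphere of $\partial B_R$ strictly inward; the rest is a one-line monotonicity computation based on the fact that enlarging the ambient set over which the complement of $E$ is computed can only increase the relative $s$-perimeter.
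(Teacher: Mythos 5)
Your proof is correct and takes essentially the same approach as the paper: translate $E$ downward and show this strictly decreases the relative $s$-perimeter, because after a change of variables the difference equals the (strictly positive) interaction between $E$ and the horizontal strip $\{0<y_N\le t\}$, which is disjoint from $E$. Your version is in fact tighter than the paper's, since you verify that the translated competitor stays inside $B_R^+$ (a necessary admissibility check that the paper omits, and which would be borderline if one translated by the full distance $\lambda$ as the paper does) and you carry out the change of variables cleanly, whereas the paper's displayed substitution has sign typos.
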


\begin{proof}
By contradiction suppose that $E$, (which, we recall,  can be taken closed), does not intersect the plane $\{z_N=0\}$. 
We consider then 
the shifted set $E-\lambda e_N$, where $(e_1,\cdots, e_N)$ is the canonical basis of $\mathbb{R}^N$, $\lambda=\dist (E, \{z_N=~0\})>~0$ and we consider
\begin{equation*}
P_s(E-\lambda e_N,\mathbb{R}_+^N)=\int_{E-\lambda e_N}\int_{\R \setminus (E-\lambda e_N)}\frac{\di x \di y}{|x-y|^{N+2s}}.
\end{equation*}
Using the following change of variables (i.e., translating downwards the set $E$ by $\lambda \overrightarrow{e_N}$)
\begin{equation*}
\begin{aligned}
E-\lambda e_N&\ni x \longmapsto x'=x+\lambda e_N \in E,\\
(E-\lambda e_N)^C& \ni y \longmapsto y'=y-\lambda e_N \in E^C,
\end{aligned}
\end{equation*}
where $(E-\lambda e_N)^C$ and $E^C$ are the complements of the sets $E-\lambda e_N$ and $E$ respectively, we have
\[
P_s(E-\lambda e_N,\mathbb{R}_+^N)=\int_E \int_{\R\setminus E}\frac{\di x \di y}{|x+\lambda e_N-y+\lambda e_N|^{N+2s}}<P_s(E,\mathbb{R}_+^N).
\]
This is in contradiction to the minimality of $E$ for $\eqref{P}$.
\end{proof}

Now we want to show other basic properties of minimizers for $\eqref{P}$. To see these, we premise a useful
\begin{defn}
Given a function $u: \mathbb{R}^{N}\rightarrow \mathbb{R}^+$, we define $u^*:\mathbb{R}^{N}\rightarrow \mathbb{R}^+$ the radially symmetric rearrangement of $u$ with respect to \ $x_N$ so that, given $x_N>0$, $t>0$, the superlevel set $\{u^*(\cdot, x_N)>t \}$ is a ball $B$ in $\mathbb{R}^{N-1}$ centered at the origin and
\begin{equation*}
|\{u^*(\cdot,x_N)>t \}|=|\{u(\cdot, x_N)>t \}|,
\end{equation*}
see Figure \ref{fig:5}. 

If $u=\chi_E$, we call $E^*$ the ball such that $\chi_{E^*}=(\chi_E)^*$.
\end{defn}

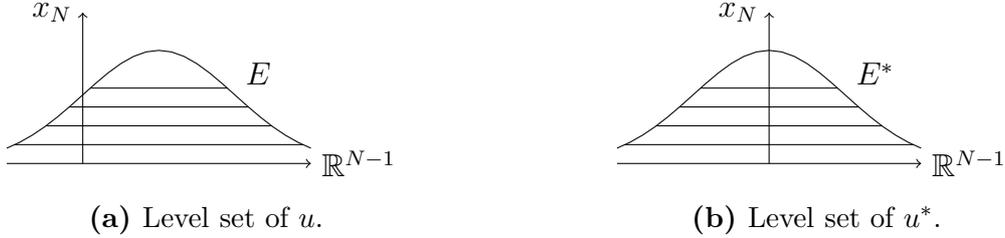
\begin{figure}[h]
  \centering
  \begin{subfigure}{0.45\columnwidth}
    \centering
    \begin{tikzpicture}
      \draw[->] (-2,0) -- +(4,0) node[right]{$\mathbb{R}^{N-1}$};
      \draw[->] (-1,0) -- +(0,2) node[left]{$x_N$};
      \begin{scope}
        \draw plot[domain=-2:2] (\x,{1.5*exp(-0.5*\x*\x)}) [clip];
        \draw (-2,1.00) -- +(4,0);
        \draw (-2,0.75) -- +(4,0);
        \draw (-2,0.50) -- +(4,0);
        \draw (-2,0.25) -- +(4,0);
      \end{scope}
      \draw (1,0.9) node [above right] {$E$};
    \end{tikzpicture}
    \caption{Level set of $u$.}\label{fig:5a}
  \end{subfigure}\qquad
  \begin{subfigure}{0.45\columnwidth}
    \centering
    \begin{tikzpicture}
      \draw[->] (-2,0) -- +(4,0) node[right]{$\mathbb{R}^{N-1}$};
      \draw[->] (0,0) -- +(0,2) node[left]{$x_N$};
      \begin{scope}
        \draw plot[domain=-2:2] (\x,{1.5*exp(-0.5*\x*\x)}) [clip];
        \draw (-2,1.00) -- +(4,0);
        \draw (-2,0.75) -- +(4,0);
        \draw (-2,0.50) -- +(4,0);
        \draw (-2,0.25) -- +(4,0);
      \end{scope}
      \draw (1,0.9) node [above right] {$E^*$};
    \end{tikzpicture}
    \caption{Level set of $u^*$.}\label{fig:5b}
  \end{subfigure}
  \caption{The radially symmetric rearrangement of $u$.}\label{fig:5}
\end{figure}

\begin{defn}
Given a function $u:\mathbb{R}^N \rightarrow \mathbb{R}^+$, we define $\hat{u}:\mathbb{R}^N \rightarrow \mathbb{R}^+$ to be the decreasing rearrangement of $u$ with respect to \ $x_N$: given $x'>0$, $t>0$, $\{x_N:\hat{u}(x',x_N)>t\}\subseteq~\mathbb{R}^+$ is a segment of the form $[0,\alpha)$ with $\alpha:=|\{x_N:\hat{u}(x',x_N)>t\}|$, as in Figure \ref{fig:10}.

If $u=\chi_E$, we call $\hat{E}$ the set such that $\chi_{\hat{E}}=\hat{(\chi_E)}$. Notice that $\partial \hat{E}$ is a graph in the direction $e_N$.
\end{defn}

\begin{figure}[h]
  \centering
  \begin{subfigure}[b]{0.45\columnwidth}
    \centering
    \begin{tikzpicture}
      \draw[->] (-2,0) -- +(4,0) node [right] {$\mathbb{R}^{N-1}$};
      \draw[->] (-0.5,0) -- +(0,2) node [left] {$x_N$};
      \begin{scope}
        \draw[clip,use Hobby shortcut,out angle=90,in angle=90] (-0.7,0) .. (-1.4,0.8) .. (-1.9,1.2) .. (-1.4,1.6) .. (0,1.2) .. (0.7,1.2) .. (1.3,1.0) .. (1.9,0);
        \foreach \x in {-2,-1.5,...,+2}
          \draw (\x,0) -- +(0,3);
      \end{scope}
      \begin{scope}
        \draw[clip] (1,2) circle [radius=0.3];
        \foreach \x in {-2,-1.5,...,+2}
          \draw (\x,0) -- +(0,3);
      \end{scope}
    \end{tikzpicture}
    \caption{Level set of $u$.}\label{fig:10a}
  \end{subfigure}\qquad
  \begin{subfigure}[b]{0.45\columnwidth}
    \centering
    \begin{tikzpicture}
      \draw[->] (-2,0) -- +(4,0) node [right] {$\mathbb{R}^{N-1}$};
      \draw[->] (-0.5,0) -- +(0,2) node [left] {$x_N$};
      \begin{scope}
        \draw[clip,use Hobby shortcut,out angle=90,in angle=260] (-1.9,0) .. (-1.5,0.6) .. (-0.9,0.9) .. (-0.7,1.4) -- (-0.7,1.4) [out angle=0,in angle=180] .. (0,1.2)  .. (0.7,1.2) -- (0.7,1.2) [out angle=90,in angle=90] .. (1.0,1.7) .. (1.3,1.0) -- (1.3,1.0) [out angle=-30,in angle=90] .. (1.9,0);
        \foreach \x in {-2,-1.5,...,+2}
          \draw (\x,0) -- +(0,3);
      \end{scope}
    \end{tikzpicture}
    \caption{Level set of $\hat{u}$.}\label{fig:10b}
  \end{subfigure}
  \caption{The decreasing rearrangement of $u$.}\label{fig:10}
\end{figure}
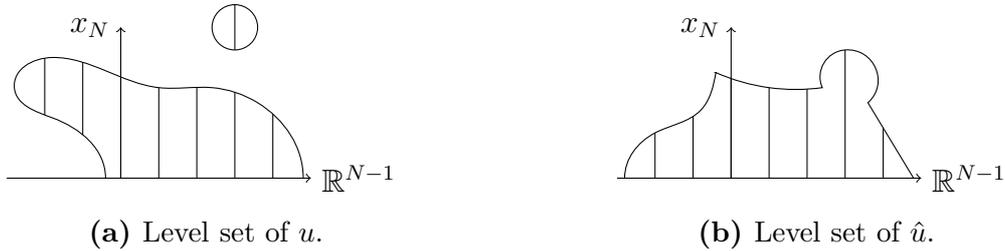

With these definitions at hand, we can show a first property of minimizers of $\eqref{P}$: 
\begin{lemma}\label{minimosimmetrico}
If $E$ is a minimizer of $\eqref{P}$, we have that
\begin{equation*}
P_s(E^*,\mathbb{R}_+^N)\le P_s(E,\mathbb{R}_+^N)
\end{equation*}
and the equality holds if and only if $E=E^*$.
\end{lemma}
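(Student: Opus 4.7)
The plan is to apply the Riesz rearrangement inequality slice by slice in the horizontal direction. For every $t>0$ write $E_t:=\{x'\in\mathbb{R}^{N-1}:(x',t)\in E\}$, so that by construction $E^*_t$ is the $(N-1)$-ball centred at the origin with $\mathcal{H}^{N-1}(E^*_t)=\mathcal{H}^{N-1}(E_t)$. Since $E\subset \mathbb{R}_+^N$, I would begin by splitting
$$
P_s(E,\mathbb{R}_+^N)=\int_E\!\int_{\mathbb{R}_+^N}\frac{\di x\,\di y}{|x-y|^{N+2s}}-\int_E\!\int_E\frac{\di x\,\di y}{|x-y|^{N+2s}},
$$
and prove $P_s(E^*,\mathbb{R}_+^N)\le P_s(E,\mathbb{R}_+^N)$ by showing that the first (mixed) term is invariant under the rearrangement while the second (self-interaction) term can only grow.

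For the mixed term, Fubini combined with the translation invariance of $\mathbb{R}^{N-1}$ gives
$$
\int_E\!\int_{\mathbb{R}_+^N}\frac{\di x\,\di y}{|x-y|^{N+2s}}=\int_0^{\infty}\!\int_0^{\infty}\mathcal{H}^{N-1}(E_{t_1})\, C(|t_1-t_2|)\,\di t_1\,\di t_2,
$$
where $C(\tau):=\int_{\mathbb{R}^{N-1}}(|z|^2+\tau^2)^{-(N+2s)/2}\,\di z$ is finite (the assumption $s<1/2$ is what makes the $y'$-integral converge). Since the slice measures $\mathcal{H}^{N-1}(E_t)$ are preserved by the rearrangement, this term is the same for $E$ and $E^*$.

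For the self-interaction term, I would slice both integrals by $t_1=x_N$ and $t_2=y_N$ and apply, for each pair $(t_1,t_2)$, the classical Riesz rearrangement inequality in $\mathbb{R}^{N-1}$ to the characteristic functions $\chi_{E_{t_1}},\chi_{E_{t_2}}$ against the symmetric strictly decreasing kernel $z\mapsto (|z|^2+(t_1-t_2)^2)^{-(N+2s)/2}$ (which coincides with its own symmetric decreasing rearrangement). This yields
$$
\int_{E_{t_1}}\!\int_{E_{t_2}}\frac{\di x'\,\di y'}{(|x'-y'|^2+(t_1-t_2)^2)^{(N+2s)/2}}\le \int_{E^*_{t_1}}\!\int_{E^*_{t_2}}\frac{\di x'\,\di y'}{(|x'-y'|^2+(t_1-t_2)^2)^{(N+2s)/2}},
$$
and integration in $t_1,t_2>0$ gives $\int_E\!\int_E\le\int_{E^*}\!\int_{E^*}$, completing the inequality.

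The equality case is the main technical obstacle. Here I would invoke the Brothers--Ziemer rigidity theorem for the Riesz inequality: since the kernel is strictly radially decreasing with no plateaux, slicewise equality forces $E_{t_1}$ and $E_{t_2}$ to be translates of $E^*_{t_1}$ and $E^*_{t_2}$ by a common vector $a(t_1,t_2)\in\mathbb{R}^{N-1}$. A compatibility argument across slices of positive measure then shows that $a(t_1,t_2)\equiv a$ is independent of $(t_1,t_2)$, so $E=E^*+(a,0)$; since this horizontal translation leaves both the volume constraint and the functional $P_s(\cdot,\mathbb{R}_+^N)$ unchanged, one may then identify $E$ with $E^*$. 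A minor side check is needed to discard the trivial pairs of slices with zero measure, where the Riesz inequality is an equality but contributes no information.
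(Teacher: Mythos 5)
Your approach is genuinely different from the paper's. The paper represents $P_s(E,\mathbb{R}_+^N)$ through the Caffarelli--Silvestre-type extension with weight $y^{1-2s}$ and then invokes Brock's theorem on the monotonicity of weighted Dirichlet energies under Steiner symmetrization; you instead work directly with the double integral and reduce to the Riesz rearrangement inequality applied slice by slice in $x_N$. The slicewise Riesz strategy is a perfectly legitimate alternative and, with Burchard's equality theorem (rather than Brothers--Ziemer, which addresses P\'olya--Szeg\H{o}), it gives a cleaner handle on the rigidity statement than Brock's result does. However, as written there is a genuine gap at the very first step.

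The decomposition
\[
P_s(E,\mathbb{R}_+^N)=\int_E\!\int_{\mathbb{R}_+^N}\frac{\di x\,\di y}{|x-y|^{N+2s}}-\int_E\!\int_E\frac{\di x\,\di y}{|x-y|^{N+2s}}
\]
is ill-posed: both terms on the right are $+\infty$. The self-interaction diverges because $|x-y|^{-N-2s}$ is not locally integrable as a function of two variables; and for the mixed term, while $C(\tau)=\int_{\mathbb{R}^{N-1}}(|z|^2+\tau^2)^{-(N+2s)/2}\di z$ is indeed finite for each fixed $\tau>0$ (for any $s$, not just $s<1/2$), scaling gives $C(\tau)=c\,\tau^{-1-2s}$, which is not integrable near $\tau=0$; hence $\int_0^\infty\!\int_0^\infty \mathcal{H}^{N-1}(E_{t_1})\,C(|t_1-t_2|)\,\di t_1\di t_2=+\infty$ as well. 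You cannot then compare the two divergent pieces separately. The standard and necessary fix is to regularize: replace the kernel by a truncation $K_\delta(z)=\min\{|z|^{-N-2s},\delta^{-N-2s}\}$ (or $|z|^{-N-2s}\chi_{\{|z|>\delta\}}$). With $K_\delta$ both terms are finite, your argument applies verbatim (the mixed term is invariant since slice measures are preserved; the self-interaction increases by slicewise Riesz), and then the monotone convergence theorem as $\delta\to 0$ transfers the inequality to $P_s$, since $\int_E\int_{\mathbb{R}^N_+\setminus E}K_\delta\nearrow P_s(E,\mathbb{R}^N_+)$. You should include this truncation/limit step; without it the proof does not parse. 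With it, your route works and, compared to the paper's extension-based proof, has the advantage of staying entirely in the ``nonlocal'' formulation and of giving a more transparent equality analysis via Burchard's theorem (up to horizontal translation, as you correctly observe).
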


\begin{proof}
Proceeding as in \cite{NPS}, we define
\[
\mathcal{H}^s(\mathbb{R}_+^N):=\{u\in L^2(\mathbb{R}_+^N): [u]_{\mathcal{H}^s(\mathbb{R}_+^N)}<+\infty \},
\]
where
\begin{equation}\label{seminorma}
[u]_{\mathcal{H}^s(\mathbb{R}_+^N)}^2:=\inf \Big \{ \int_{\mathbb{R}_+^N \times \mathbb{R}^+}(|\nabla v|^2+|\partial_y v|^2)y^{1-2s}\di x \di y:v\in H_{\text{loc}}^1(\mathbb{R}_+^N \times \mathbb{R}^+), v(\cdot,0)=u(\cdot) \Big \}.
\end{equation}
The space $\mathcal{H}^s(\mathbb{R}_+^N)$ is endowed with the Hilbert norm
\[
\|u\|_{\mathcal{H}^s(\mathbb{R}_+^N)}^2=\|u\|_{L^2(\mathbb{R}_+^N)}^2+[u]_{\mathcal{H}^s(\mathbb{R}_+^N)}^2.
\]
According to $\eqref{seminorma}$ we get
\begin{equation}\label{psesteso}
P_s(E,\mathbb{R}_+^N)=\frac{1}{2}\inf \Big \{ \int_{\mathbb{R}_+^N\times \mathbb{R}^+} (|\nabla_x v|^2+v_y^2)y^{1-2s}\di x \di y : v \in H_{\text{loc}}^1(\mathbb{R}_+^N\times \mathbb{R}^+), v(\cdot, 0)=\chi_E(\cdot)\Big \},
\end{equation}
and we define
\[
H^1(\mathbb{R}_+^N \times \mathbb{R}^+,y^{1-2s}\di y):=\Big \{v\in H_{\text{loc}}^1(\mathbb{R}_+^N \times \mathbb{R}^+): \int_{\mathbb{R}_+^N \times \mathbb{R}^+}(|v|^2+|\nabla_x v|^2+|\partial_y v|^2)y^{1-2s}\di x \di y <\infty \Big \}.
\]
For all $v\in H^1(\mathbb{R}_+^N \times \mathbb{R}^+,y^{1-2s}\di y)$, we set $v^*(\cdot,y)=[v(\cdot,y)]^*$. Then
\begin{itemize}
\item[a)]since the symmetrization preserves characteristic functions, we have that
\begin{equation}\label{thirdterm}
(\chi_E(\cdot))^*=\chi_{E^*}(\cdot);
\end{equation}
\item[b)]from \cite{B}*{Theorem $1$} we get 
\begin{equation}\label{secondterm}
\int_{B_R^+ \times \mathbb{R}^+}(|\nabla_x v^*|^2 +(v_y^*)^2)y^{1-2s}\di x \di y \le \int_{B_R^+ \times \mathbb{R}^+}(|\nabla_x v|^2+v_y^2) y^{1-2s}\di x \di y.
\end{equation}
\end{itemize}
Hence combining $\eqref{psesteso}$, $\eqref{thirdterm}$ and $\eqref{secondterm}$ we deduce the desired conclusion.
\end{proof}
In a similar way, we obtain the following
\begin{lemma}\label{grafico}
Let $E$ be a minimizer of $\eqref{P}$. Then 
\begin{equation*}
P_s(\hat{E},\mathbb{R}_+^N)\le P_s(E,\mathbb{R}_+^N)
\end{equation*}
and the equality holds if and only if $E=\hat{E}$.
\end{lemma}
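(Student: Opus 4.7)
The proof parallels that of Lemma \ref{minimosimmetrico}, replacing the Schwarz symmetrization in $x$ with the one-dimensional decreasing rearrangement in the $x_N$ variable. Starting from the extension characterization
\[
2P_s(E,\mathbb{R}_+^N) = \inf\Big\{ \int_{\mathbb{R}_+^N\times\mathbb{R}^+} (|\nabla_x v|^2 + v_y^2)\,y^{1-2s}\,\di x\,\di y : v(\cdot,0)=\chi_E \Big\}
\]
recalled in the previous lemma, I associate to any admissible competitor $v$ its rearrangement $\hat v(x',x_N,y)$, obtained by performing, at each fixed $(x',y)$, the one-dimensional decreasing rearrangement in $x_N\in\mathbb{R}^+$ of the slice $v(x',\cdot,y)$. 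Since this rearrangement is equimeasurable and sends characteristic functions to characteristic functions, $\hat v(\cdot,0)=\chi_{\hat E}$, so $\hat v$ is admissible for the extension representation of $P_s(\hat E,\mathbb{R}_+^N)$.

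The heart of the argument is the weighted P\'olya-Szeg\H{o} inequality
\[
\int_{\mathbb{R}_+^N\times\mathbb{R}^+} (|\nabla_x \hat v|^2 + \hat v_y^2)\,y^{1-2s}\,\di x\,\di y \le \int_{\mathbb{R}_+^N\times\mathbb{R}^+} (|\nabla_x v|^2 + v_y^2)\,y^{1-2s}\,\di x\,\di y.
\]
To establish it, I extend $v$ by even reflection in $x_N$ to $\tilde v$ on $\mathbb{R}^{N-1}\times\mathbb{R}\times\mathbb{R}^+$; the one-sided rearrangement in $x_N$ then corresponds to the Steiner symmetrization of $\tilde v$ in the $x_N$ direction. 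For each fixed $y$, the classical Steiner-P\'olya-Szeg\H{o} inequality yields the bound on the $x$-gradient contribution. For the $y$-derivative term I would use $L^2$-contractivity of the rearrangement (Almgren-Lieb), namely $\|\hat v(\cdot,y+h)-\hat v(\cdot,y)\|_{L^2(\mathbb{R}_+^N)} \le \|v(\cdot,y+h)-v(\cdot,y)\|_{L^2(\mathbb{R}_+^N)}$, and pass to the limit $h\to 0$ to get $\|\partial_y \hat v(\cdot,y)\|_{L^2}\le\|\partial_y v(\cdot,y)\|_{L^2}$ pointwise in $y$. Since the weight $y^{1-2s}$ depends only on $y$ (and in particular is constant in $x_N$, hence invariant under the $x_N$-symmetrization), one may then multiply by it and integrate.

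Taking the infimum over admissible $v$ gives $P_s(\hat E,\mathbb{R}_+^N)\le P_s(E,\mathbb{R}_+^N)$. The equality case follows from the rigidity of the P\'olya-Szeg\H{o} inequality (Brothers-Ziemer type): equality forces the $s$-harmonic extension of $\chi_E$ to coincide with its $x_N$-rearrangement, whence $\chi_E=\chi_{\hat E}$ on the trace. The main technical obstacle is justifying the weighted P\'olya-Szeg\H{o} for the $y$-derivative term, since the rearrangement acts slicewise in $y$ and there is no direct pointwise identification between $\partial_y\hat v$ and $\partial_y v$; the contractivity argument above circumvents this by reducing the estimate to the equimeasurability of the rearrangement, after which the $x_N$-independence of the weight allows one to conclude.
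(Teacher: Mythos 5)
Your argument follows essentially the same route as the paper: both pass through the Caffarelli--Silvestre extension representation \eqref{psesteso}, rearrange the extension slicewise in the extension variable $y$, and invoke a weighted P\'olya--Szeg\H{o} inequality for the $x_N$-rearrangement, the paper citing Brock \cite{B}*{Theorem 1} for precisely the weighted Steiner inequality you sketch by even reflection plus $L^2$-contractivity. Your derivation of the inequality from first principles is a valid (if longer) substitute for the citation, and your remarks on rigidity for the equality case are consistent with what Brock's theorem provides.
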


\begin{proof}
Proceeding as in Lemma \ref{minimosimmetrico} and  setting $\hat{v}(\cdot, y)=\hat{[v(\cdot,y)]}$, we have that
\begin{equation}\label{primotermine}
(\hat{\chi_E(\cdot)})=\chi_{\hat{E}}(\cdot),
\end{equation}
and from \cite{B}*{Theorem $1$} we get
\begin{equation}\label{secondotermine}
\int_{B_R^+ \times \mathbb{R}^+}(|\nabla_x \hat{v}|^2 +(\hat{v_y})^2)y^{1-2s}\di x \di y \le \int_{B_R^+ \times \mathbb{R}^+}(|\nabla_x v|^2+v_y^2) y^{1-2s}\di x \di y.
\end{equation}
Recalling $\eqref{psesteso}$ and using $\eqref{primotermine}$ and $\eqref{secondotermine}$ we conclude the proof.
\end{proof}
\begin{remark}
Note that from these two symmetrizations we obtain a connected minimizer for $\eqref{Phalf}$. 
\end{remark}
We next prove an estimate on the diameter of a set minimizing $\eqref{P}$:
\begin{thm}\label{stimadiametro}
There exists a positive constant $C_1$ such that, for $R$ large, 
uf $E$ is a minimizer of $\eqref{P}$, then
\begin{equation}\label{Diamestimate}
|\text{diam}\; E| \le C_1, 
\end{equation}
with diam $E$ denoting the diameter of the set $E$.
\end{thm}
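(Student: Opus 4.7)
The plan is to combine a uniform upper bound on the relative $s$-perimeter of any minimizer $E$ with density estimates for nonlocal minimizers, and to exploit the geometric structure of $E$ provided by Lemmas \ref{l:touch}, \ref{minimosimmetrico}, and \ref{grafico} to reach a volume contradiction. I would organize the argument in four short steps.

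First, I would test the minimality of $E$ against a fixed half-ball $B_{r_0}^+$ of unit volume, choosing $r_0$ small enough that $B_{r_0}^+ \subset B_R^+$ for all $R$ large. This yields
\begin{equation*}
P_s(E,\mathbb{R}_+^N) \le P_s(B_{r_0}^+,\mathbb{R}_+^N) =: M_0,
\end{equation*}
with $M_0 = M_0(N,s)$ independent of $R$. Second, I would invoke the density estimates for minimizers of the nonlocal perimeter from \cite{BFV} and \cite{CRS} at interior points of $\overline{E}$, and from \cite{MV} at points of the contact set $\overline{E}\cap\{x_N=0\}$. Together they furnish uniform constants $c_0, \rho_0 > 0$, depending only on $N$, $s$, and $M_0$, such that $|E \cap B_\rho(x)| \ge c_0 \rho^N$ for every $x \in \overline{E}$ and every $\rho \in (0,\rho_0]$.

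Third, I would use the symmetrization results of this section to assume $E = E^* = \hat{E}$, and Lemma \ref{l:touch} to ensure $\overline{E}$ touches $\{x_N=0\}$. These conditions together force
\begin{equation*}
E = \bigl\{(x',x_N)\in\mathbb{R}_+^N : 0 \le x_N \le f(|x'|)\bigr\}
\end{equation*}
for some non-increasing $f:[0,\infty) \to [0,\infty)$ supported in $[0,R_0]$, so that $\mathrm{diam}\, E \le 2R_0 + f(0)$. Finally, arguing by contradiction I would assume $\mathrm{diam}\, E > D$ for arbitrarily large $D$; then either $R_0 > D/4$ or $f(0) > D/2$. In the former case I would pick $p_k := (2k\rho_0 e_1, 0)$ for $k = 0,1,\ldots,\lfloor D/(8\rho_0)\rfloor$; in the latter, $p_k := (0,\ldots,0,2k\rho_0)$ for $k = 0,1,\ldots,\lfloor D/(4\rho_0)\rfloor$. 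Each $p_k$ lies in $\overline{E}$, the balls $B_{\rho_0}(p_k)$ are pairwise disjoint, and the density estimate yields $|E \cap B_{\rho_0}(p_k)| \ge c_0 \rho_0^N$. Summing over $k$ produces $|E| \gtrsim c_0 \rho_0^{N-1} D$, which contradicts $|E|=1$ as soon as $D$ exceeds some constant $C_1 = C_1(N,s)$.

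The step I expect to be the main obstacle is securing the density estimate up to and including the contact hyperplane $\{x_N=0\}$, where the standard interior estimates from \cite{BFV} and \cite{CRS} do not directly apply. This is precisely the role of the nonlocal capillarity framework of \cite{MV}, whose boundary density estimate for minimizers meeting the hyperplane orthogonally is the crucial tool here; one also needs to verify that the constants $c_0$ and $\rho_0$ depend on the minimizer only through the uniform perimeter bound $M_0$, so that they remain effective uniformly in $R$.
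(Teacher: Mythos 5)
Your proposal is essentially the same argument as the paper's proof, and it is correct. Both proofs reduce to the same two ingredients: the structure of $E$ after the rearrangements of Lemmas \ref{minimosimmetrico} and \ref{grafico} (so that $E$ contains a vertical segment $[0,He_N]$ and is a radially decreasing graph over $\{x_N=0\}$), and the density estimate of \cite{MV}*{Theorem 1.7} applied to a string of disjoint balls of a fixed small radius along a long direction of $E$, yielding a volume lower bound that contradicts $|E|=1$. The paper estimates the height $H$ and the radii $R(t)$ directly, while you phrase it as a contradiction and split into a horizontal and a vertical case; these are cosmetic variants of the same counting argument. Your preliminary step of recording a uniform perimeter bound $P_s(E,\mathbb{R}_+^N)\le M_0$ by comparison with a fixed half-ball, and noting that the density constants depend only on $N$, $s$, $M_0$ and hence are uniform in $R$, is a helpful point that the paper leaves implicit but that is indeed needed for the argument to be uniform.
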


\begin{proof}
Thanks to Lemma \ref{minimosimmetrico} and Lemma \ref{grafico}, we can assume that there exists $H>0$ such that
\begin{equation}
[0,He_N]\subseteq~E
\end{equation}
and that, for all $t>0$,
\begin{equation}
E_t=E \cap \{x_N=t\}=B_{R(t)}.
\end{equation}
We fix $r_0 > 0$, and we divide the interval $[0,He_N]$  into $M$ sub-intervals of length at most $2r_0$, 
so $M \leq [ \frac{H}{2r_0}  ]+1$. For every sub-interval we consider its center $x^i$, $i=1,\cdots, M$. 

From \cite{MV}*{Theorem $1.7$} we have that, if $r_0$ is sufficiently small depending on $N$ and $s$, 
there exists $C_0>0$ such that for every $x^i$ there exists  a ball $B_{r_0}(x^i)$ with center at $x^i$ and radius $r_0$ such that
\begin{equation*}
|E\cap B_{r_0}(x^i)| \ge \frac{r_0^N}{C_0}>0\quad \text{for all}\;  i=1, \cdots, M. 
\end{equation*}
From this it follows that 
\begin{equation*}
1= |E|\ge  \Big|\frac{H}{2r_0}\Big |\cdot\frac{r_0^N}{C_0}, 
\end{equation*}
and hence
\begin{equation}\label{stimaH}
|H|\le \frac{2C_0}{r_0^{N-1}}.
\end{equation}
We proceed similarly to estimate $R(t)$ for all $t>0$, obtaining that
\begin{equation}\label{stimaEt}
|R(t)|\le \frac{2C_0}{r_0^{N-1}}\qquad \text{for all }t>0.
\end{equation}
Combinig $\eqref{stimaH}$ and $\eqref{stimaEt}$, we deduce the assertion.
\end{proof}

\medskip 

As a corollary we get that a minimizer for $\eqref{P}$ is a minimizer for $\eqref{Phalf}$:
\begin{coro}\label{minimolibero}
Let $E$ be a minimizer of $\eqref{P}$. If $R>2 C_1$, with $C_1$ given by Theorem \ref{stimadiametro}, 
then $E$  is a free minimizer, i.e.\
\begin{equation*}
\bar{E}\cap \partial B_R^+=\varnothing.
\end{equation*}
\end{coro}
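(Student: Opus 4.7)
The strategy is to localize $E$ uniformly in $R$. The three key ingredients are already in place: the diameter bound of Theorem \ref{stimadiametro}, the fact (Lemma \ref{l:touch}) that the minimizer must touch the hyperplane $\{x_N=0\}$, and the symmetry/graphicality given by Lemmas \ref{minimosimmetrico} and \ref{grafico}. Once combined, they pin $\bar E$ inside a fixed ball around the origin whose radius does not depend on $R$, so that for $R$ large enough $\bar E$ stays at positive distance from the spherical part of $\partial B_R^+$; compactly supported volume-preserving perturbations of $E$ then remain admissible competitors for \eqref{P}, which is exactly what ``free minimizer'' encodes here.

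Concretely, I would proceed as follows. First, by Lemmas \ref{minimosimmetrico} and \ref{grafico}, I may assume $E = \widehat{E^*}$, so $E$ is radially symmetric with respect to the $x_N$-axis and each horizontal slice $E\cap\{x_N=t\}$ is a Euclidean ball $B_{R(t)}(0')$ in $\mathbb{R}^{N-1}$, with $R(t)$ non-increasing and supported in some interval $[0,H]$. Next, Lemma \ref{l:touch} ensures that $\bar E$ meets the plane $\{x_N=0\}$; combined with the radial symmetry just obtained, this forces the origin $0\in\mathbb{R}^N$ to lie in $\bar E$. Third, by Theorem \ref{stimadiametro}, $\mathrm{diam}\,E \le C_1$, and since $0\in\bar E$ this implies $|x|\le C_1$ for every $x\in\bar E$, so
\[
\bar E \;\subseteq\; \overline{B_{C_1}(0)} \cap \overline{\mathbb{R}_+^N}.
\]
In particular, for any $R > 2C_1$ (indeed $R > C_1$ would already suffice), $\bar E$ does not reach the spherical cap $\partial B_R \cap \overline{\mathbb{R}_+^N}$, which is the assertion of the corollary.

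I do not foresee any real obstacle beyond routine bookkeeping: all the substantive work has been absorbed into Theorem \ref{stimadiametro} (diameter via the density estimates of \cite{MV}) and into Lemmas \ref{minimosimmetrico}--\ref{l:touch}. The only mild subtlety is the interpretation of ``$\bar E \cap \partial B_R^+ = \varnothing$'': since Lemma \ref{l:touch} forces $\bar E$ to meet the flat portion of $\partial B_R^+$ inside $\{x_N=0\}$, the corollary must be read as saying that $\bar E$ is disjoint from the part of the boundary that carries the binding constraint in \eqref{P}, namely the spherical cap in $\overline{\mathbb{R}_+^N}$. Once this is settled, any compactly supported volume-preserving perturbation of $E$ inside $\mathbb{R}_+^N$ still lies in $B_R^+$, so $E$ is a minimizer of \eqref{Phalf} in the unconstrained sense.
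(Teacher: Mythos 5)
Your proof is correct and is the argument the paper implicitly intends (the corollary is stated without a written proof, immediately after Theorem \ref{stimadiametro}): the diameter bound, together with the axial symmetry and graphicality forced on any minimizer by Lemmas \ref{minimosimmetrico} and \ref{grafico} and the touching of $\{x_N=0\}$ from Lemma \ref{l:touch}, pin $\bar E$ inside $\overline{B_{C_1}(0)}$, which for $R>2C_1$ (indeed already $R>C_1$) lies at positive distance from $\partial B_R$. Your observation that the conclusion $\bar E\cap\partial B_R^+=\varnothing$ must be read as disjointness from the spherical cap $\partial B_R\cap\overline{\mathbb{R}_+^N}$ only, since Lemma \ref{l:touch} forces $\bar E$ to meet the flat disk $\{x_N=0\}\cap\overline{B_R}\subset\partial B_R^+$, is a correct and worthwhile clarification of the statement as written.
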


Finally we prove the following result: 
\begin{prop}
Let $E$ be a minimizer of $\eqref{P}$. Then $\partial E$ is of class $C^\infty$.
\end{prop}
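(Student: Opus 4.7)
The plan is to reduce this regularity claim to the known $C^{1,\alpha}$ theory for almost-minimizers of the nonlocal perimeter, and then to bootstrap to $C^\infty$ via the constant-$s$-mean-curvature equation. By Corollary \ref{minimolibero}, $E$ does not touch $\partial B_R^+$, so the only effective constraint is the volume $|E|=m$; the first step is to dispose of this constraint via a Lagrange-multiplier/flow argument. I would fix a smooth vector field $X$ compactly supported in $\mathbb{R}_+^N\cap B_R^+$, placed in a region of positive Lebesgue density of $E$ (such a region exists since by Lemmas \ref{l:touch} and \ref{grafico} the set $E$ contains a segment in the $e_N$-direction), and chosen so that $\int_E\dv X\neq 0$. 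Given any local competitor $F$ with $E\triangle F\Subset\mathbb{R}_+^N\cap B_R^+$, flowing $F$ along $X$ for a small time $t_F$ with $|t_F|\le C\big||F|-|E|\big|$ produces a set admissible for \eqref{P} (same volume $m$, still inside $B_R^+$) whose relative $s$-perimeter is $P_s(F,\mathbb{R}_+^N)+O(|t_F|)$. Comparing to $E$ by minimality yields the $\Lambda$-minimality
\begin{equation*}
P_s(E,\mathbb{R}_+^N)\le P_s(F,\mathbb{R}_+^N)+\Lambda|E\triangle F|
\end{equation*}
for some $\Lambda>0$.

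With $\Lambda$-minimality in hand, I would argue locally: for $x_0\in\partial E\cap\mathbb{R}_+^N$ and $B_r(x_0)\Subset\mathbb{R}_+^N$, the interaction between $B_r(x_0)$ and the far field $\mathbb{R}^N\setminus B_{2r}(x_0)$---which includes the outer half-space $(\mathbb{R}_+^N)^c$---is a smooth functional of local variations of $E$. Thus in $B_r(x_0)$ the set $E$ is an almost-minimizer of the standard nonlocal $s$-perimeter, and the uniform density estimates together with the improvement-of-flatness argument of \cite{CRS} (in the almost-minimizing version, compare also \cite{MV}) show that $\partial E\cap\mathbb{R}_+^N$ is locally a $C^{1,\alpha}$ hypersurface for some $\alpha\in(0,1)$; the graphicality in the $e_N$-direction from Lemma \ref{grafico} rules out the possible singular set arising from the general theory.

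Finally, the first-variation formula \eqref{firstvar} combined with volume-preserving test functions gives that $H_s^{\mathbb{R}_+^N}(\partial E)\equiv\lambda$ in the pointwise sense along $\partial E\cap\mathbb{R}_+^N$, for the Lagrange multiplier $\lambda\in\mathbb{R}$. Representing $\partial E$ locally as the graph of a $C^{1,\alpha}$ function $u$ and rewriting this equation as a nonlinear nonlocal equation of order $1+2s$ in $u$, the Schauder-type bootstrap of \cite{BFV} for nonlocal constant-mean-curvature equations upgrades $u$ to $C^{2,\alpha}$ and, iterating, to $C^\infty$. The delicate point I expect is verifying in the second step that the tail contribution of the relative $s$-perimeter is a smooth enough perturbation for the improvement-of-flatness estimates of \cite{CRS} to apply; however, since $x_0$ has positive distance from $\{x_N=0\}$ and the tail is integrated against $(\mathbb{R}_+^N)^c$, it depends smoothly on local variations of $E$ and can be absorbed into the $\Lambda$-minimality constant.
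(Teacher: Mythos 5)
Your overall plan (free minimality via Corollary \ref{minimolibero}, removal of the volume constraint through a flow argument giving $\Lambda$-minimality, interior $\varepsilon$-regularity \`a la \cite{CRS}, and a final $C^{1,\alpha}\to C^\infty$ bootstrap via \cite{BFV} through the constant nonlocal mean curvature equation) is a legitimate route, and the first and last blocks are close in spirit to what the paper does: it also passes through free minimality and through \cite{BFV} (Corollary~3), which packages exactly the bootstrap you describe and yields smoothness of $\partial E$ outside a closed singular set of Hausdorff dimension $N-8$.

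The genuine gap is in the step where you dismiss the singular set: you assert that ``the graphicality in the $e_N$-direction from Lemma \ref{grafico} rules out the possible singular set arising from the general theory.'' This is not a consequence of the $\varepsilon$-regularity or almost-minimizer theory and is precisely where the paper has to work. What the paper actually does is: (i) use the rotational symmetry of Lemma \ref{minimosimmetrico} to pin down that a nonempty singular set would have to be the unique topmost point of $\partial E$ in the $x_N$-direction; (ii) blow up at that point to obtain a singular symmetric cone $C$ contained in a halfspace; (iii) invoke the density estimates of \cite{MV}*{Theorem~1.7} to conclude $C\neq\varnothing$ and that $C$ is a \emph{Lipschitz} cone; and (iv) apply \cite{FV}*{Theorem~1}, which says a nonlocal minimal Lipschitz cone is a halfspace, contradicting singularity. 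Without a result of the type (iv), the graph property is not by itself known to preclude singularities for nonlocal minimal graphs, so this chain (symmetry $\to$ location of singularity $\to$ blow-up $\to$ Lipschitz cone $\to$ flatness) is needed and cannot be replaced by the one-line appeal to graphicality. If you repair this step by importing the blow-up argument and the Farina--Valdinoci flatness theorem, the rest of your outline goes through.
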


\begin{proof}
From Lemma \ref{grafico} we know that $\partial E$ is a graph in the $x_N$-direction. Then, \cite{BFV}*{Corollary $3$} implies that $\partial E$ is of class $C^\infty$ outside a closed singular set of Hausdorff dimension $N-8$.

Assume by contradiction that the singular set is nonempty. Since by Lemma \ref{minimosimmetrico} $E$ is radially symmetric, the singular set has to be its highest point in the $x_N$ direction. Moreover,
the blow-up of $E$ centered at the singular point is a singular symmetric cone $C$ contained in a halfspace. By density estimates (see \cite{MV}*{Theorem $1.7$}), we also know that $C \neq \varnothing$, hence $C$ is a Lipschitz cone. By \cite{FV}*{Theorem $1$} we then get that $C$ is a halfspace, hence it cannot be singular, and $\partial E$ is of class $C^\infty$. 
\end{proof}
\begin{remark}
It would be interesting to know whether minimizers, or even critical points, of the functional in $\eqref{Phalf}$ are unique up to horizontal translations (see for instance \cites{GG,MG,GR} for similar uniqueness results).
\end{remark}

\section{Appendix}\label{appendix}
We prove in this appendix the assertions in Remark \ref{ossne}.
\begin{lemma}\label{unicoptocritico}
If $B_1$ is the unit ball of $\mathbb{R}^N$, then $0\in B_1$ is a non-degenerate global minimum of $V_{B_1}$ and it is the unique critical point.
\end{lemma}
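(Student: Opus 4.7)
The plan is to exploit the rotational symmetry of $B_1$ to reduce the analysis to a one-dimensional question. Since $B_1^C$ is invariant under every orthogonal transformation $R \in O(N)$, the change of variable $y \mapsto Ry$ in the defining integral shows $V_{B_1}(Rx) = V_{B_1}(x)$, so $V_{B_1}(x) = f(|x|)$ for some $f:[0,1)\to \mathbb{R}$. In particular $\nabla V_{B_1}(0) = 0$, so the origin is automatically a critical point.

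To conclude that $0$ is the unique critical point and the global minimum in $B_1$, I would show that $f$ is strictly increasing on $[0,1)$, equivalently that
$$
\partial_r V_{B_1}(r e_1) \,=\, (N+2s) \int_{|y|>1} \frac{y_1-r}{|re_1-y|^{N+2s+2}}\,\di y \,>\, 0 \qquad \text{for every } r \in (0,1).
$$
The key observation is that the reflection $T\colon y \mapsto (2r-y_1, y')$ preserves the distance $|re_1-y|$ and sends the region $A^- := \{|y|>1,\, y_1<r\}$ injectively into $A^+ := \{|y|>1,\, y_1>r\}$: indeed for $r>0$ and $y_1<r$ one has $(2r-y_1)^2 > y_1^2$, so $|T(y)|>|y|>1$. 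Performing the substitution $z = T(y)$ in the integral over $A^-$ produces an exact cancellation with the contribution of $T(A^-) \subsetneq A^+$, leaving
$$
\partial_r V_{B_1}(re_1) \,=\, (N+2s) \int_{A^+ \setminus T(A^-)} \frac{y_1-r}{|re_1-y|^{N+2s+2}}\,\di y.
$$
The integrand is strictly positive on $A^+$, and $A^+ \setminus T(A^-)$ has positive measure (for instance it contains all points $(y_1,0)$ with $1 < y_1 < 2r+1$), so the claimed inequality follows.

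For the non-degeneracy, rotational invariance forces the Hessian of $V_{B_1}$ at $0$ to be a scalar multiple of the identity, so it suffices to compute $\partial_{x_1}^2 V_{B_1}(0)$. Differentiating twice under the integral sign and using the elementary identity $\int_{|y|>1} y_1^2 |y|^{-N-2s-4}\,\di y = \frac{1}{N}\int_{|y|>1} |y|^{-N-2s-2}\,\di y$ (from spherical symmetry), one finds
$$
\partial_{x_1}^2 V_{B_1}(0) \,=\, \frac{(N+2s)(2+2s)}{N}\int_{|y|>1}\frac{\di y}{|y|^{N+2s+2}} \,>\, 0,
$$
so the Hessian is positive definite and $0$ is a non-degenerate minimum. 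The main obstacle is the reflection argument establishing strict monotonicity of $f$, which requires careful bookkeeping to identify the asymmetry between $A^+$ and $T(A^-)$; the remaining steps reduce to routine manipulations with rotationally symmetric integrals, and the blow-up of $V_{B_1}$ at $\partial B_1$ provided by Lemma \ref{comportamentoVomega} ensures the strict monotonicity forces $0$ to be the global minimum.
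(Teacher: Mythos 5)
Your proof is correct and takes essentially the same approach as the paper: radial symmetry reduces the problem to a one-variable radial profile, positivity of the Laplacian of $V_{B_1}$ at the origin gives non-degeneracy, and a reflection/oddness cancellation across the hyperplane $\{y_1 = r\}$ yields strict monotonicity of the profile. The only difference is bookkeeping: the paper carries out the same reflection slice-by-slice after a Fubini decomposition in $y'$, whereas you perform it globally in $\mathbb{R}^N$, which is marginally cleaner and, incidentally, makes the appeal to Lemma \ref{comportamentoVomega} superfluous, since strict monotonicity of the profile on $[0,1)$ already pins down the origin as the unique critical point and global minimum.
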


\begin{proof}
First of all we note that $V_{B_1}$ is a radial function, i.e.\ $V_{B_1}(x)=v_{B_1}(|x|)$. Hence, since $V_{B_1}$ is smooth 
in the interior of the ball, it follows that $v_{B_1}'(0)=0$. It is easily seen that 
$$
  (\Delta V_{B_1})(0) =  2 (1+s) (N+2s) \int_{B_1^C}\frac{1}{|y|^{N+2s+2}}\di y > 0. 
$$
Therefore, since $v_{B_1}''(0)=\frac{1}{n}\Delta V_{B_1}(0)$, it follows that for 
 fixed $\delta>0$ one has $v_{B_1}''(t) > 0$ for $t\in [0,\delta]$, which implies the non-degeneracy of the 
 origin as a critical point of $V_{B_1}$. 

It remains to show the monotonicity of $v_{B_1}$ in the whole interval $(0,1)$, but since Lemma~\ref{comportamentoVomega} holds, it is sufficient to show that
\begin{equation}\label{crescente}
\frac{\di}{\di t}V_{B_1}(t \vec{e}_1)\neq 0 \quad \text{for}\; t\in [\delta, 1-\delta].
\end{equation}
Recalling the definition \eqref{Vomega}, we get
\begin{equation}\label{monotonia}
\frac{\di}{\di t}V_{B_1}(t \vec{e}_1)=\tilde{c}_{N,s}\int_{B_1^C}\frac{y_1-t}{|y-t\vec{e}_1|^{N+2s+2}}\di y,
\end{equation}
where $\tilde{c}_{N,s}$ is a constant depending only on $N$ and 
$s$, $y=(y_1,y')\in \mathbb{R}\times \mathbb{R}^{N-1}$ and $B_1^C$ denotes the complement of $B_1$.

By Fubini's Theorem 
\begin{equation}\label{fubini}
\int_{B_1^C}\frac{y_1-t}{|y-t\vec{e}_1|^{N+2s+2}}\di y=\int_{\mathbb{R}^{N-1}}\di y' \int_{\{y_1:(y_1,y')\in B_1^C\}}\frac{y_1-t}{|y-t\vec{e}_1|^{N+2s+2}}\di y.
\end{equation}
Since $(y_1,y')\in B_1^c \times \mathbb{R}^{N-1}$, we have two cases:
\begin{itemize}
\item[$1)$] if $|y'|\le 1 \quad \Rightarrow \quad y_1\in \mathbb{R}$;
\item[$2)$] if $|y'|<1 \quad \Rightarrow \quad y_1\le -\sqrt{1-|y'|^2}\; \vee \; y_1\ge \sqrt{1-|y'|^2}$.
\end{itemize}
In the first case we obtain by oddness 
\begin{equation}\label{monot1}
\int_{\{y_1:(y_1,y')\in B_1^C\}}\frac{y_1-t}{|y-t\vec{e}_1|^{N+2s+2}}\di y=\int_{\{y_1\in \mathbb{R}\}}\frac{y_1-t}{((y_1-t)^2+|y'|^2)^{(N+2s+2)/2}}\di y=0.
\end{equation}
In the second case, using the changes of variables $y_1-t=s$ and $z=t-y_1$, we get
\begin{equation}\label{monot}
\begin{aligned}
&\int_{\{y_1:(y_1,y')\in B_1^C\}}\frac{y_1-t}{|y-t\vec{e}_1|^{N+2s+2}}\di y\\
&=\int_{\{y_1\le -\sqrt{1-|y'|^2}\}}\frac{y_1-t}{|y-t\vec{e}_1|^{N+2s+2}}\di y+\int_{\{y_1\ge \sqrt{1-|y'|^2}\}}\frac{y_1-t}{|y-t\vec{e}_1|^{N+2s+2}}\di y\\
&=\int_{\{z\ge t+\sqrt{1-|y'|^2}\}}\frac{z}{(z^2+|y'|^2)^{(N+2s+2)/2}}\di z\\
&+\int_{\{s\ge \sqrt{1-|y'|^2}-t\}}\frac{s}{(s^2+|y'|^2)^{(N+2s+2)/2}}\di y>0,
\end{aligned}
\end{equation}
since $\{z:z\ge t+\sqrt{1-|y'|^2}\}\subseteq \{ z:z\ge \sqrt{1-|y'|^2}-t\}$ and since the first integral is negative.

Putting together $\eqref{monotonia}$, $\eqref{fubini}$, $\eqref{monot1}$ and $\eqref{monot}$ we obtain $\eqref{crescente}$, which concludes the proof.  \end{proof}

\medskip 

\begin{lemma}\label{l:5.2}
 Let $\Phi_{\xi}$ be defined as in Proposition \ref{criticoimplicaNMC}. There exist dumb-bell domains (as in Figure \ref{neck}) with the same topology of the ball such that $\Phi_\xi$ has at least three critical points.
\end{lemma}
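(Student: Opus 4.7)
The plan is to exhibit a dumb-bell $\Omega$, diffeomorphic to a ball, whose associated potential
\[
V_\Omega(x) = \int_{\Omega^C} \frac{\di y}{|x-y|^{N+2s}}
\]
from Theorem \ref{mainth} possesses at least three critical points; the asymptotic correspondence between critical points of $V_\Omega$ and of $\Phi_\xi$ that follows from combining Lemma \ref{sviluppoPer_s} and Proposition \ref{persdevelopment} (as already exploited in the proof of Theorem \ref{mainth}) will then yield three critical points of $\Phi_\xi$ for $\eps$ small. Concretely, I would fix $R > 0$ and two points $P_1, P_2 \in \R^N$ with $|P_1 - P_2|$ much larger than $R$, and for a small parameter $\delta > 0$ construct a smooth bounded $\Omega_\delta$ by joining $B_R(P_1)$ and $B_R(P_2)$ through a thin tube of radius $\delta$ around the segment $\overline{P_1 P_2}$. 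The construction is arranged so that $\Omega_\delta$ is rotationally symmetric around the axis $P_1 P_2$ and invariant under the reflection $\sigma$ through the hyperplane $H$ orthogonal to $P_2 - P_1$ at the midpoint; then $\Omega_\delta$ is diffeomorphic to a ball and $V_{\Omega_\delta}\circ \sigma = V_{\Omega_\delta}$.

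\medskip

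To produce three critical points of $V_{\Omega_\delta}$, I would first observe that on any compact set $K_1 \subset B_{R/2}(P_1)$ one has $V_{\Omega_\delta} \to V_{B_R(P_1)}$ in $C^2(K_1)$ as $\delta \to 0$. By Lemma \ref{unicoptocritico}, $V_{B_R(P_1)}$ has a unique non-degenerate minimum at the centre, so the implicit function theorem produces a non-degenerate critical point $x_1 = x_1(\delta) \to P_1$ of $V_{\Omega_\delta}$, and by $\sigma$-symmetry a second one $x_2 := \sigma(x_1) \to P_2$. For a third one, I would use Lemma \ref{comportamentoVomega}: the restriction $V_{\Omega_\delta}|_{H \cap \Omega_\delta}$ blows up at the relative boundary, hence attains a minimum at some interior point $x_3 \in H$. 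The $\sigma$-symmetry forces $\partial_{P_2 - P_1} V_{\Omega_\delta} \equiv 0$ on $H$, so $x_3$ is actually a critical point of $V_{\Omega_\delta}$ on $\Omega_\delta$; and for $\delta$ small, $x_3 \neq x_1, x_2$ because $V_{\Omega_\delta}(x_3) \to +\infty$ as $\delta \to 0$ while $V_{\Omega_\delta}(x_{1,2})$ remain bounded.

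\medskip

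To transfer these three critical points to $\Phi_\xi$, I would fix a compact $\Theta \subset\subset \Omega_\delta$ containing $\{x_1, x_2, x_3\}$ in its interior. The two non-degenerate critical points $x_1, x_2$ give corresponding critical points $\xi_1, \xi_2$ of $\Phi_\xi$ on $\Theta_\eps$ for $\eps$ small, by the implicit function / degree argument used in the proof of Theorem \ref{mainth} (exploiting \eqref{developm-der} and \eqref{der-sviluppoPerimetroPalladeformata}). For the possibly degenerate $x_3$, I would first verify that the fixed-point construction of $w_\eps$ in Proposition \ref{LyapunovSchmidt} is $\sigma$-equivariant (by uniqueness of the contraction-mapping fixed point together with the $\sigma$-invariance of the space $W$ in \eqref{eq:space-W}), so that $\Phi_{\sigma(\xi)} = \Phi_\xi$. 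Then, choosing a compact $K \subset H \cap \Omega_\delta$ with $x_3$ in its interior and $V_{\Omega_\delta}|_{\partial K} > V_{\Omega_\delta}(x_3) + \gamma$ for some $\gamma > 0$, the restriction $\Phi_\xi|_{K/\eps}$ (which, by the expansions, is uniformly close to a positive multiple of $- V_{\Omega_\delta}(\eps\, \cdot\,)$) attains an interior extremum at a point $\xi_3 \to x_3/\eps$; by $\sigma$-equivariance, $\xi_3$ is then a critical point of $\Phi_\xi$ on $\Theta_\eps$, distinct from $\xi_1, \xi_2$.

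\medskip

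The main obstacle is the potential degeneracy of $x_3$ as a critical point of $V_{\Omega_\delta}$, which prevents a direct implicit function argument at this point; the route through the reflection symmetry $\sigma$ sketched above circumvents this difficulty, at the cost of having to carefully track the symmetry through each step of the Lyapunov--Schmidt reduction. A cleaner alternative would be to apply a small $\sigma$-equivariant generic perturbation of $\Omega_\delta$ that renders $x_3$ non-degenerate, after which all three critical points can be transferred uniformly by the implicit function theorem.
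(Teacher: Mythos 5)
Your treatment of the first two critical points mirrors the paper's: both rely on the $C^2$-convergence of $V_{\Omega_\delta}$ near the centres of the two balls and on Lemma \ref{unicoptocritico} to obtain two non-degenerate minima. Where you diverge is in producing the third one. The paper observes that the two strict local minima of $V_{\Omega_\delta}$ with moats around them, together with the blowup of $V_{\Omega_\delta}$ at $\partial\Omega_\delta$ from Lemma \ref{comportamentoVomega}, create a mountain-pass geometry for the reduced functional $\Phi$ through the expansion \eqref{eq:exp-Phi-V}; the third critical point is then produced abstractly by the Mountain Pass Theorem on the compact reduced domain, without localizing it. You instead impose a reflection symmetry $\sigma$ on the dumb-bell with fixed hyperplane $H$ through the waist, obtain a third critical point of $V_{\Omega_\delta}$ by minimizing its restriction to $H\cap\Omega_\delta$, and transfer it to $\Phi$ via a $\sigma$-equivariant version of the Lyapunov--Schmidt reduction of Proposition \ref{LyapunovSchmidt} and the principle of symmetric criticality. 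Both arguments are sound. Yours has the advantage of pinning down the third solution at the waist and of avoiding any min-max machinery, but it requires checking $\sigma$-equivariance of the fixed-point construction of $w_\eps$ at each step --- an extra verification the paper's proof does not need. The mountain-pass route is also somewhat more flexible: as the authors remark right after the proof, it applies to asymmetric dumb-bells (e.g.\ balls of different radii), where there is no fixed hyperplane $H$ and your symmetry-based argument for the third critical point would not directly apply.
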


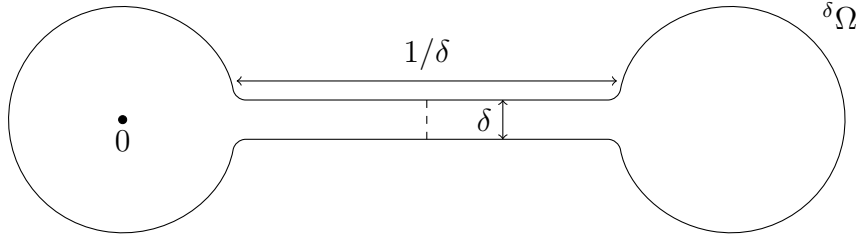
\begin{figure}[t]
  \centering
  \begin{tikzpicture}
    \pgfmathsetmacro{\r}{1.5};
    \pgfmathsetmacro{\a}{10};
    \pgfmathsetmacro{\x}{8};
    \draw[fill] (0,0) circle[radius=1.5pt] node[below] {$0$};
    \draw[rounded corners] (\x,0) +(+180-\a:\r cm) arc (+180-\a:-180+\a:\r cm) -- (-\a:\r cm) arc (360-\a:+\a:\r cm) -- cycle;
    \draw[dashed] (\x/2,0 |- +\a:\r) -- (\x/2,0 |- -\a:\r);
    \draw[<->] (\x/2+1,0 |- +\a:\r) -- (\x/2+1,0 |- -\a:\r)node[midway,left] {$\delta$};
    \draw[<->] (\r,0 |- +2*\a:\r) -- (\x-\r,0 |- +2*\a:\r)node[midway,above] {$1/\delta$};
    \draw (\x,0) +(45:\r cm) node[above right] {$^{\delta}\Omega$};
  \end{tikzpicture}
  \caption{A dumb-bell domain $^{\delta}\Omega$. }\label{neck}
\end{figure}
\end{remark}

\begin{proof}[Sketch of the Proof]
We consider a sequence of domains ${}^{\delta}\Omega$ as in Figure \ref{neck}. Fixed $r\in (0,1)$, it is easy to see that 
\begin{equation}
V_{{}^{\delta}\Omega}\rightarrow V_{B_1} \quad \text{in}\; C^2(B_r(0))\quad \text{as}\; \delta \rightarrow 0.
\end{equation}
For $\delta$ small, by Lemma \ref{unicoptocritico}, we get that $V_{{}^{\delta}\Omega}$ has a unique non-degenerate minimum $x_1$ in $B_{r/2}(0)$ and there exists $\gamma >0$ such that
\[
\inf_{\partial B_r(0)}V_{{}^{\delta}\Omega}>\sup_{B_{r/2}(0)}V_{{}^{\delta}\Omega}+\gamma.
\]
By symmetry, we have a non-degenerate minimum point $x_2$ in the other ball with the same properties. Recall also that from Lemma~\ref{comportamentoVomega} that if $x\in \partial {}^{\delta}\Omega$, it holds
\[
\lim_{{}^{\delta}\Omega \ni y\rightarrow x}V_{{}^{\delta}\Omega}(y)=+\infty.
\]
Hence, from \eqref{eq:exp-Phi-V} (with a similar formula for the gradient in $\xi$) and the 
above observations, there exists a critical point of $\Phi$ other that $x_1$ and $x_2$, 
by Mountain Pass Theorem. 
\end{proof}

\medskip We notice that the argument in the proof of Lemma \ref{l:5.2} is rather flexible, and 
does not require rigidity assumptions on the domain such as some symmetry.

\begin{bibdiv}
\begin{biblist}

\bib{AV}{article}{
   author={Abatangelo, Nicola},
   author={Valdinoci, Enrico},
   title={A notion of nonlocal curvature},
   journal={Numer. Funct. Anal. Optim.},
   volume={35},
   date={2014},
   number={7-9},
   pages={793--815},
}

\bib{A}{article}{
   author={Almgren, F. J., Jr.},
   title={Existence and regularity almost everywhere of solutions to
   elliptic variational problems with constraints},
   journal={Mem. Amer. Math. Soc.},
   volume={4},
   date={1976},
   number={165},
   pages={viii+199},
}

\bib{AM}{book}{
   author={Ambrosetti, Antonio},
   author={Malchiodi, Andrea},
   title={Nonlinear analysis and semilinear elliptic problems},
   series={Cambridge Studies in Advanced Mathematics},
   volume={104},
   publisher={Cambridge University Press, Cambridge},
   date={2007},
   pages={xii+316},
}

\bib{AM2}{book}{
   author={Ambrosetti, Antonio},
   author={Malchiodi, Andrea},
   title={Perturbation methods and semilinear elliptic problems on ${\bf
   R}^n$},
   series={Progress in Mathematics},
   volume={240},
   publisher={Birkh\"auser Verlag, Basel},
   date={2006},
   pages={xii+183},
}

\bib{ADPM}{article}{
   author={Ambrosio, Luigi},
   author={De Philippis, Guido},
   author={Martinazzi, Luca},
   title={Gamma-convergence of nonlocal perimeter functionals},
   journal={Manuscripta Math.},
   volume={134},
   date={2011},
   number={3-4},
   pages={377--403},
}

\bib{BFV}{article}{
   author={Barrios, Bego\~na},
   author={Figalli, Alessio},
   author={Valdinoci, Enrico},
   title={Bootstrap regularity for integro-differential operators and its
   application to nonlocal minimal surfaces},
   journal={Ann. Sc. Norm. Super. Pisa Cl. Sci. (5)},
   volume={13},
   date={2014},
   number={3},
   pages={609--639},
}

\bib{Bressan}{book}{
    AUTHOR = {Bressan, Alberto},
     TITLE = {Hyperbolic systems of conservation laws},
    SERIES = {Oxford Lecture Series in Mathematics and its Applications},
    VOLUME = {20},
      NOTE = {The one-dimensional Cauchy problem},
 PUBLISHER = {Oxford University Press, Oxford},
      YEAR = {2000},
     PAGES = {xii+250},
      ISBN = {0-19-850700-3},
   MRCLASS = {35-02 (35B35 35L65)},
  MRNUMBER = {1816648},
MRREVIEWER = {Denis Serre},
}	
	
\bib{B}{article}{
   author={Brock, F.},
   title={Weighted Dirichlet-type inequalities for Steiner symmetrization},
   journal={Calc. Var. Partial Differential Equations},
   volume={8},
   date={1999},
   number={1},
   pages={15--25},
}

\bib{CFW}{article}{
   author={Cabré, Xavier},
   author={Fall, Mouhamed Moustapha},
   author={Weth, Tobias},
   title={Near-sphere lattices with constant nonlocal mean curvature},
   year={2017},
   eprint={https://arxiv.org/abs/1702.01279},
   status={preprint}
}

\bib{CFW16}{article}{
   author={Cabré, Xavier},
   author={Fall, Mouhamed Moustapha},
   author={Weth, Tobias},
   title={Curves and surfaces with constant nonlocal mean curvature: meeting Alexandrov and Delaunay},
   year={2015},
   eprint={https://arxiv.org/abs/1503.00469},
   status={preprint}
}
\bib{CRS}{article}{
   author={Caffarelli, L.},
   author={Roquejoffre, J.-M.},
   author={Savin, O.},
   title={Nonlocal minimal surfaces},
   journal={Comm. Pure Appl. Math.},
   volume={63},
   date={2010},
   number={9},
   pages={1111--1144},
}
		
\bib{Coz}{article}{
   author={Cozzi, Matteo},
   title={On the variation of the fractional mean curvature under the effect
   of $C^{1,\alpha}$ perturbations},
   journal={Discrete Contin. Dyn. Syst.},
   volume={35},
   date={2015},
   number={12},
   pages={5769--5786},
   issn={1078-0947},
   review={\MR{3393254}},
}

\bib{D}{article}{
   author={Druet, Olivier},
   title={Sharp local isoperimetric inequalities involving the scalar
   curvature},
   journal={Proc. Amer. Math. Soc.},
   volume={130},
   date={2002},
   number={8},
   pages={2351--2361},
}

\bib{DDPW}{article}{
   author={Davila, Juan},
   author={Del Pino, Manuel},
   author={Wei, Juncheng},
   title={Nonlocal minimal Lawson cones},
   year={2013},
   eprint={https://arxiv.org/abs/1303.0593},
   status={preprint}
}

\bib{DNPV}{article}{
   author={Di Nezza, Eleonora},
   author={Palatucci, Giampiero},
   author={Valdinoci, Enrico},
   title={Hitchhiker's guide to the fractional Sobolev spaces},
   journal={Bull. Sci. Math.},
   volume={136},
   date={2012},
   number={5},
   pages={521--573},
}
\bib{E}{article}{
   author={Ehrhard, Antoine},
   title={In\'egalit\'es isop\'erim\'etriques et int\'egrales de Dirichlet
   gaussiennes},
   language={French},
   journal={Ann. Sci. \'Ecole Norm. Sup. (4)},
   volume={17},
   date={1984},
   number={2},
   pages={317--332},
}

\bib{F}{article}{
   author={Fall, Mouhamed Moustapha},
   title={Area-minimizing regions with small volume in Riemannian manifolds
   with boundary},
   journal={Pacific J. Math.},
   volume={244},
   date={2010},
   number={2},
   pages={235--260},
}
	
\bib{Fall}{article}{
   author={Fall, Mouhamed Moustapha},
   title={Embedded disc-type surfaces with large constant mean curvature and
   free boundaries},
   journal={Commun. Contemp. Math.},
   volume={14},
   date={2012},
   number={6},
   pages={1250037, 35},
}

\bib{FV}{article}{
   author={Farina, Alberto},
   author={Valdinoci, Enrico},
   
  title={Flatness results for nonlocal minimal cones and subgraphs},
   year={2017},
   eprint={https://arxiv.org/abs/1706.05701},
   status={preprint}
}	

\bib{FFMMM}{article}{
   author={Figalli, Alessio},
   author={Fusco, Nicola},
   author={Maggi, Francesco},
   author={Millot, Vincent},
   author={Morini, Massimiliano},
   
  title={Isoperimetry and stability properties of balls with respect to nonlocal energies},
   year={2014},
   eprint={https://arxiv.org/abs/1403.0516},
   status={preprint}
}
\bib{FMM}{article}{
   author={Fusco, Nicola},
   author={Millot, Vincent},
   author={Morini, Massimiliano},
   title={A quantitative isoperimetric inequality for fractional perimeters},
   journal={J. Funct. Anal.},
   volume={261},
   date={2011},
   number={3},
   pages={697--715},
}

\bib{GMT}{article}{
   author={Gonzalez, E.},
   author={Massari, U.},
   author={Tamanini, I.},
   title={On the regularity of boundaries of sets minimizing perimeter with
   a volume constraint},
   journal={Indiana Univ. Math. J.},
   volume={32},
   date={1983},
   number={1},
   pages={25--37},
}
\bib{GG}{article}{
   author={Grossi, Massimo},
   title={Uniqueness of the least-energy solution for a semilinear Neumann
   problem},
   journal={Proc. Amer. Math. Soc.},
   volume={128},
   date={2000},
   number={6},
   pages={1665--1672},
}

\bib{MG}{article}{
   author={Grossi, Massimo},
   title={Uniqueness results in nonlinear elliptic problems},
   note={IMS Workshop on Reaction-Diffusion Systems (Shatin, 1999)},
   journal={Methods Appl. Anal.},
   volume={8},
   date={2001},
   number={2},
   pages={227--244},
}
\bib{GR}{article}{
   author={Grossi, Massimo},
   title={A uniqueness result for a semilinear elliptic equation in
   symmetric domains},
   journal={Adv. Differential Equations},
   volume={5},
   date={2000},
   number={1-3},
   pages={193--212},
}

\bib{G}{article}{
   author={Gr\"uter, Michael},
   title={Boundary regularity for solutions of a partitioning problem},
   journal={Arch. Rational Mech. Anal.},
   volume={97},
   date={1987},
   number={3},
   pages={261--270},
}	
\bib{J}{article}{
   author={James, I. M.},
   title={On category, in the sense of Lusternik-Schnirelmann},
   journal={Topology},
   volume={17},
   date={1978},
   number={4},
   pages={331--348},
}

\bib{MV}{article}{
   author={Maggi, Francesco},
   author={Valdinoci, Enrico},
   title={Capillarity problems with nonlocal surface tension energies},
   year={2016},
   eprint={https://arxiv.org/abs/1606.08610},
   status={preprint}
}
	
\bib{M}{article}{
   author={Mihaila, Cornelia},
   title={Axial symmetry for fractional capillarity droplets},
   year={2017},
   eprint={https://arxiv.org/abs/1710.03421},
   status={preprint}
}	
\bib{Mi}{article}{
   author={Minlend, Ignace Aristide},
   title={Solutions to Serrin's overdetermined problem
on Manifolds},
   year={2017},
   status={preprint}
}			
\bib{MJ}{article}{
   author={Morgan, Frank},
   author={Johnson, David L.},
   title={Some sharp isoperimetric theorems for Riemannian manifolds},
   journal={Indiana Univ. Math. J.},
   volume={49},
   date={2000},
   number={3},
   pages={1017--1041},
}

\bib{N}{article}{
   author={Nardulli, Stefano},
   title={The isoperimetric profile of a smooth Riemannian manifold for
   small volumes},
   journal={Ann. Global Anal. Geom.},
   volume={36},
   date={2009},
   number={2},
   pages={111--131},
}

\bib{Ni}{book}{
   author={Nirenberg, Louis},
   title={Topics in nonlinear functional analysis},
   series={Courant Lecture Notes in Mathematics},
   volume={6},
   note={Chapter 6 by E. Zehnder;
   Notes by R. A. Artino;
   Revised reprint of the 1974 original},
   publisher={New York University, Courant Institute of Mathematical
   Sciences, New York; American Mathematical Society, Providence, RI},
   date={2001},
   pages={xii+145},
}

\bib{NPS}{article}{
   author={Novaga, Matteo},
   author={Pallara, Diego},
   author={Sire, Yannick},
   
  title={A fractional isoperimetric problem in the Wiener space},
   year={2014},
   eprint={http://de.arxiv.org/abs/1407.5417},
   status={preprint}
}	
		
\bib{R}{article}{
   author={Ros, Antonio},
   title={The isoperimetric problem},
   conference={
      title={Global theory of minimal surfaces},
   },
   book={
      series={Clay Math. Proc.},
      volume={2},
      publisher={Amer. Math. Soc., Providence, RI},
   },
   date={2005},
   pages={175--209},
}

\bib{S}{book}{
   author={Samko, Stefan G.},
   title={Hypersingular integrals and their applications},
   series={Analytical Methods and Special Functions},
   volume={5},
   publisher={Taylor \& Francis, Ltd., London},
   date={2002},
   pages={xviii+359},
}

\bib{T}{article}{
   author={Taylor, Jean E.},
   title={Boundary regularity for solutions to various capillarity and free
   boundary problems},
   journal={Comm. Partial Differential Equations},
   volume={2},
   date={1977},
   number={4},
   pages={323--357},
}

\bib{Y}{article}{
   author={Ye, Rugang},
   title={Foliation by constant mean curvature spheres},
   journal={Pacific J. Math.},
   volume={147},
   date={1991},
   number={2},
   pages={381--396},
}
	
\end{biblist}
\end{bibdiv}

\end{document}